\documentclass[reqno,11pt]{amsart}
\voffset=-1.75cm \hoffset=-1.75cm \textheight=24.0cm \textwidth=16.2cm
\usepackage{amsmath, amsfonts,amsthm,amssymb,amsbsy,upref,color,graphicx,amscd,hyperref,enumerate}
\usepackage{color}
\usepackage[active]{srcltx}
\usepackage[latin1]{inputenc}
\usepackage{bbm}
\usepackage{graphicx}
\usepackage{subfig} 
\usepackage{xfrac}
\usepackage{comment} 

\usepackage{dsfont}



\def\ds{\displaystyle}
\def\eps{{\varepsilon}}
\def\N{\mathbb{N}}

\def\R{\mathbb{R}}
\def\mS{\mathbb{S}}

\def\F{\mathcal{F}}
\def\HH{\mathcal{H}}

\def\M{\mathcal{M}}

\newcommand{\be}{\begin{equation}}
\newcommand{\ee}{\end{equation}}

\newcommand{\de}{\partial}

\newcommand{\ind}{\mathbbm{1}}

\def\cF{\mathcal{F}}

\def\G{\mathcal{G}}

\theoremstyle{plain}
\newtheorem{teo}{Theorem}[section]
\newtheorem{lm}[teo]{Lemma}
\newtheorem{prop}[teo]{Proposition}
\newtheorem{coro}[teo]{Corollary}

\theoremstyle{definition}
\newtheorem{definition}[teo]{Definition}

\newtheorem{oss}[teo]{Remark}
\newtheorem{exam}[teo]{Example}

\def\XXint#1#2#3{{\setbox0=\hbox{$#1{#2#3}{\int}$}
     \vcenter{\hbox{$#2#3$}}\kern-.5\wd0}}

\usepackage{refcount}

\newcounter{cte}

\setcounter{cte}{0}

\numberwithin{equation}{section}

\begin{document}

\title[parabolic variational inequalities]{On the asymptotic behavior of the solutions to parabolic variational inequalities}
\author{Maria Colombo, Luca Spolaor, Bozhidar Velichkov}

\address {Maria Colombo: \newline \indent
Institute for Theoretical Studies, ETH Z\"urich,
	\newline \indent
 Clausiusstrasse 47, CH-8092 Z\"urich, Switzerland
 	}
\email{maria.colombo@eth-its.ethz.ch}

\address {Luca Spolaor: \newline \indent
	Massachusetts Institute of Technology (MIT), 
	\newline \indent
	77 Massachusetts Avenue, Cambridge 
	MA 02139, USA}
\email{lspolaor@mit.edu}

\address {Bozhidar Velichkov: \newline \indent
Universit\'e Grenoble Alpes
\newline \indent
B\^atiment IMAG, 700 Avenue Centrale, 38401 Saint-Martin-d'H\`eres, France}
\email{bozhidar.velichkov@univ-grenoble-alpes.fr}

\keywords{parabolic variational inequalities, \L ojasiewicz inequality, obstacle problem, thin-obstacle problem, epiperimetric inequality}
\subjclass{35R35, 35K85, 35B35, 35B40, 35B41}
%
\date{\today}
\maketitle

\begin{abstract} We consider various versions of the obstacle and thin-obstacle problems, we interpret them as variational inequalities, with non-smooth constraint, and prove that they satisfy a new \emph{constrained \L ojasiewicz inequality}. The difficulty lies in the fact that, since the constraint is non-analytic, the pioneering method of L.\,Simon (\cite{Simon0}) does not apply and we have to exploit a better understanding on the constraint itself. We then apply this inequality to two associated problems. First we combine it with an abstract result on parabolic variational inequalities, to prove the convergence at infinity of the strong global solutions to the parabolic obstacle and thin-obstacle problems to a unique stationary solution with a rate. 
Secondly, we give an abstract proof, based on a parabolic approach, of the epiperimetric inequality, which we then apply to the singular points of the obstacle and thin-obstacle problems.
\end{abstract}



\section{Introduction} 

In this paper we consider parabolic variational inequalities of the form
\begin{equation}\label{e:intro:para}
\begin{cases}
\big(u'(t)+\nabla \cF (u(t))\big)\cdot (v-u(t))\ge0\,,\quad\text{for every}\quad v\in \mathcal K\quad\text{and}\quad t>0\,,\\
u(0)=u_0\in \mathcal K\,,
\end{cases} 
\end{equation}
where $\mathcal F$ is a given analytic integral functional, $\mathcal K$ is a convex subset of $L^2(\Omega)$ and the dot stands for the scalar product in $L^2(\Omega)$; $\Omega$ being a smooth domain in $\R^d$ or a $d$-dimensional manifold. We provide a new method for the study of the asymptotic behavior of the solution at infinity and we apply it to the parabolic obstacle and thin-obstacle problems, which are related to several relevant physical models 
(for more details and an extensive reference list we refer to the books \cite{Friedman, DuLi}).
\smallskip

In the absence of the constraint $\mathcal K$, the parabolic problem \eqref{e:intro:para} reduces to the infinite dimensional gradient flow of $\mathcal F$, which is given by
$$u'(t)=-\nabla\mathcal F(u(t))\quad\text{for every}\quad t>0,\qquad u(0)=u_0.$$
In this case, it is well known (for more details we refer to Subsection \ref{sub:intro:loja1}) that the asymptotic behavior of the solution can be deduced by the so-called \L ojasiewicz inequality, that is  for every stationary point $\varphi$ of $\mathcal F$ there are constants $\gamma\in\,]0,\sfrac12]$, $C>0$ such that
\begin{equation}\label{e:intro:loja}
\big(\mathcal F(u)-\mathcal F(\varphi)\big)_+^{1-\gamma}\le C\|\nabla \mathcal F(u)\|_{L^2}\quad\text{for every}\ u\ \text{in a neighborhood of}\ \varphi\,.
\end{equation}
Precisely,  \eqref{e:intro:loja} implies that: 
\begin{equation}\label{e:intro:claim}\,
\!\!\!\!\begin{array}{cc}
\!\!\!\!\text{\it There is a neighborhood $\mathcal U$ of $\varphi$ such that: if $u_0\in\mathcal U$ and $\mathcal F(u(t))\ge\mathcal F(\varphi)$, for every $t\ge 0$,}\\
\!\!\!\!\text{\it then $u(t)$ converges, as $t\to\infty$, to a critical point $u_\infty$ of $\mathcal F$, with a rate depending on $\gamma$.} 
\end{array}
\end{equation}
In the seminal paper \cite{Simon0}, Leon Simon proved \eqref{e:intro:claim} for the flow associated to harmonic maps between two analytic manifolds. Notice that, also in this case, 
there is a geometric constraint given by the target manifold, but a change of coordinates allows to trivialize this constraint, while transforming the Dirichlet energy into an analytic functional $\mathcal F$. In \cite{Simon0}, Simon showed that the analyticity of this functional 
allows to reduce \eqref{e:intro:loja} to the classical \L ojasiewicz inequality \eqref{e:loja0} for analytic functions in $\R^n$.
\medskip

Suppose now that $\mathcal K$ is a non-analytic convex set of $L^2$, as in the case of the obstacle and the thin-obstacle problems. From one side, the non-smooth nature of $\mathcal K$ does not allow the use of a Lagrange multiplier argument
in order to replace the constraint by an additional analytic term in the energy $\mathcal F$. On the other hand, just the analyticity of $\mathcal F$, and the consequent \eqref{e:intro:loja}, are not sufficient to obtain the convergence result \eqref{e:intro:claim} (see Example \ref{exam:non_analytic}) since the geometry of the constraint may affect the flow.


In this paper, we introduce the following quantitative estimate that, slightly abusing the terminology, we call \emph{constrained \L ojasiewicz inequality}: for every stationary point $\varphi\in \mathcal K$ of $\mathcal F$ there are constants $\gamma\in\,]0,\sfrac12]$, $C>0$ such that
\begin{equation}\label{e:intro:loja2}
\big(\mathcal F(u)-\mathcal F(\varphi)\big)_+^{1-\gamma}\le C\|\nabla \mathcal F(u)\|_{\mathcal K}\quad\text{for every}\quad u \quad\text{in a neighborhood of}\quad\varphi,
\end{equation}
where $\|\nabla \mathcal F(u)\|_{\mathcal K}$ is defined as 
$$
\|\nabla \mathcal F(u)\|_{\mathcal K}:=\sup\left\{0\,,\,\sup_{v\in\mathcal K\setminus\{u\}}\frac{-(v-u)\cdot \nabla \mathcal F(u)}{\|v-u\|}\right\}.
$$
We show that \eqref{e:intro:loja2} can be used to determine the asymptotic behavior of the solutions of constrained gradient flows \eqref{e:intro:para}. Precisely, we prove that, under some mild natural assumptions on the functional $\mathcal F$ and the constraint $\mathcal K$, the constrained \L ojasiewicz inequality \eqref{e:intro:loja2} still gives \eqref{e:intro:claim} (see Proposition \ref{p:decay}). 
Thus, all the information, on the presence of the constraint and its properties, is now contained in this new  {constrained} \L ojasiewicz inequality. In particular, the estimate \eqref{e:intro:loja2} becomes 
an intrinsic property of $\mathcal K$ and $\mathcal F$, whose proof needs to be adapted to each specific situation. In particular we verify \eqref{e:intro:loja2} for the obstacle and thin obstacle problems (see Section \ref{s:loja}) thus leading to our main results on the associated parabolic flows (see Theorems \ref{t:flow1} and \ref{t:flow2}).   

Finally, let us remark that the constrained \L ojasiewicz inequality \eqref{e:intro:loja2}, combined with a new constriction based on the parabolic flow, also implies a logarithmic epiperimetric inequality at the singular points of the (time-independent) obstacle and thin-obstacle problems (see Theorem \ref{t:log}). In particular, this implies the uniqueness of the blow-up limits and the logarithmic rate of convergence of the blow-up sequences at the singular free boundary points for these problems. 
\medskip

The paper is organized as follows. In Section \ref{s:para} we prove that \eqref{e:intro:loja2} implies the claim \eqref{e:intro:claim} and in Section \ref{s:epi} we show that \eqref{e:intro:loja2} implies a logarithmic epiperimetric inequality. In Section \ref{s:loja} we prove the constrained \L ojasiewicz inequality for the obstacle and thin-obstacle problems, while in Section \ref{s:proofs} we prove our main results on the parabolic obstacle and thin-obstacle problems.

In the rest of the present section, we introduce the obstacle and the thin-obstacle problems (Subsections \ref{sub:intro:ostacolo} and \ref{sub:intro:thin}); we state our main results in Subsection \ref{sub:intro:main}, while the next subsection \label{sub:intro:loja} is dedicated to the classical \L ojasiewicz inequality for analytic functions, its applications and the relation to our results.


\subsection{\L ojasiewicz inequality on constrained domains}\label{sub:intro:loja1} 
In this subsection we use several examples of constrained and unconstrained problems in order to illustrate the main novelty of this paper: \emph{the constrained \L ojasiewicz inequality }, which is a new \L ojasiewicz-type estimate for constrained functionals. We go through the classical finite dimensional approach of \L ojasiewicz and we argue on the effects of the geometric constraint $\mathcal K$ on it. At the end of the Subsection, we make a connection with the infinite dimensional setting, by a simple model case.  
\medskip

 Let $\mathcal F:\R^N\to\R$ be a given function. For any $\xi_0\in\R^N$, consider the ODE 
\begin{equation}\label{e:ODE}
\xi'(t)=-\nabla \mathcal F(\xi(t))\quad\text{for}\quad t>0,\qquad \xi(0)=\xi_0\,.
\end{equation}
The asymptotic behavior of the global solutions starting from a point $\xi_0$ in a neighborhood of $x_0$ is a problem of major interest in several fields. One of the conditions on the function $\mathcal F$, which implies that $\xi(t)$ admits a unique limit, as $t\to\infty$, 
is the so-called \emph{\L ojasiewicz inequality}, that is:
\begin{center}{\it for every $\bar\xi\in\R^N$ critical point of $\mathcal F$, there exist an open neighborhood $U(\bar\xi)$ of $\bar\xi$,\\ and constants $C>0$ and $\gamma\in]0,\sfrac12]$, such that:}
\end{center}
\begin{equation}\label{e:loja0}
|\mathcal F(\xi)-\mathcal F(\bar\xi)|^{1-\gamma}\le C|\nabla\mathcal F(\xi)|\qquad\text{\it for every}\qquad \xi\in U(\bar\xi)\,.
\end{equation}
\noindent The following result is essentially due to \L ojasiewicz (see \cite{loja}). We sketch the proof below and for more details, we refer to the proof of Proposition \ref{p:decay}.

\begin{prop}[\L ojasiewicz decay-rate condition]\label{p:sill}
If \eqref{e:loja0} holds and $\xi:\R^+\to\R^N$ is a bounded solution of \eqref{e:ODE}, then the limit $\ds\xi_\infty:=\lim_{t\to\infty}\xi(t)$ exists and 
	\begin{equation}\label{e:decay_rate}
	|\xi(t)-\xi_\infty|\le \begin{cases} Ct^{-\frac{\gamma}{1-2\gamma}}\quad\text{if}\quad \gamma<\sfrac12\,;\\
	Ce^{-t}\quad\text{if}\quad \gamma=\sfrac12\,.
	\end{cases}
	\end{equation} 
\end{prop}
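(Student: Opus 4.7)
The plan is to show, as usual, that the arclength $\int_0^\infty |\xi'(t)|\,dt$ is finite, which will give existence of the limit, and then extract the quantitative rate from the analogous estimate on $[t,\infty)$. I will write $h(t):=\mathcal F(\xi(t))-\mathcal F(\bar\xi)$ for a candidate limit $\bar\xi$, and observe the energy identity $h'(t)=-|\nabla\mathcal F(\xi(t))|^2=-|\xi'(t)|^2$. Choosing $\bar\xi$ in the (nonempty) $\omega$-limit set of $\xi(\cdot)$, which consists of critical points of $\mathcal F$ at the level $\mathcal F_\infty:=\lim_t\mathcal F(\xi(t))$, guarantees $h\ge 0$ and $h(t)\searrow 0$.

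The crucial computation combines the energy identity with the \L ojasiewicz inequality $h^{1-\gamma}\le C|\nabla\mathcal F(\xi)|$ to give $-\frac{d}{dt}\bigl(h^\gamma\bigr)=\gamma\, h^{\gamma-1}|\nabla\mathcal F|^2\ge \frac{\gamma}{C}|\nabla\mathcal F|=\frac{\gamma}{C}|\xi'(t)|$, valid as long as $\xi(t)$ stays in the neighborhood $U(\bar\xi)$ where \eqref{e:loja0} applies. Integrating in $t$ will yield $\int_{t_0}^{t}|\xi'(s)|\,ds\le \frac{C}{\gamma}\,h(t_0)^\gamma$ on any such interval, showing in particular that the arclength on $[t_0,\infty)$ is controlled by the initial energy excess.

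The main obstacle is to justify that the trajectory actually remains in $U(\bar\xi)$ from some time $t_0$ onward, since \eqref{e:loja0} is only local. My plan is to fix a ball $B_r(\bar\xi)\subset U(\bar\xi)$ and, exploiting that $\bar\xi$ lies in the $\omega$-limit set, choose $t_0$ so large that $\xi(t_0)\in B_{r/2}(\bar\xi)$ and $\frac{C}{\gamma}h(t_0)^\gamma<\sfrac{r}{2}$. A continuity/bootstrap argument on $t^*:=\sup\{t\ge t_0:\xi([t_0,t])\subset B_r(\bar\xi)\}$ will then force $t^*=\infty$: if $\xi$ ever attempted to exit $B_r$, the arclength bound above would already force $|\xi(t^*)-\bar\xi|<r$, a contradiction. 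Hence the arclength on $[t_0,\infty)$ is finite, the limit $\xi_\infty$ exists, and the condition $\bar\xi\in\omega(\xi_0)$ forces $\xi_\infty=\bar\xi$.

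For the decay rate I will combine the \L ojasiewicz inequality with the energy identity to obtain the scalar differential inequality $h'(t)\le -h(t)^{2(1-\gamma)}/C^2$. Standard integration gives $h(t)\le h(t_0)\,e^{-(t-t_0)/C^2}$ when $\gamma=\sfrac12$ and, by separation of variables, $h(t)\le C'\, t^{-1/(1-2\gamma)}$ when $\gamma<\sfrac12$. Combining with the tail arclength estimate $|\xi(t)-\xi_\infty|\le \int_t^\infty|\xi'(s)|\,ds\le \frac{C}{\gamma}h(t)^\gamma$ then produces exactly the rates stated in \eqref{e:decay_rate}.
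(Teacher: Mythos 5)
Your proof is correct, but it follows a genuinely different route than the paper's. The paper works with $f(t)=\int_t^\infty|\nabla\mathcal F(\xi(s))|^2\,ds$, derives the differential inequality $-f'\ge Cf^{2(1-\gamma)}$, deduces the polynomial decay of $f$, and then controls increments by Cauchy--Schwarz, $|\xi(t)-\xi(T)|\le f(t)^{\sfrac12}(T-t)^{\sfrac12}$, summing over dyadic times $t=2^n$, $T=2^{n+1}$ to obtain both the Cauchy property and the rate $t^{-\gamma/(1-2\gamma)}$; moreover, it explicitly simplifies matters by \emph{assuming} the whole orbit lies in the neighborhood where \eqref{e:loja0} holds. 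You instead use the classical \L ojasiewicz--Simon device of differentiating $h^\gamma$, getting the pointwise bound $-\frac{d}{dt}h^\gamma\ge\frac{\gamma}{C}|\xi'|$ and hence a direct finite-length (arclength) estimate with the clean tail bound $\int_t^\infty|\xi'|\le\frac{C}{\gamma}h(t)^\gamma$, and you supply the trapping/bootstrap argument on $B_r(\bar\xi)$ that the paper omits; combined with the same scalar inequality $h'\le -h^{2(1-\gamma)}/C^2$ this yields identical rates (your exponential rate $e^{-ct}$ for $\gamma=\sfrac12$ is what the statement really means, cf.\ \eqref{e:main:decay_rate2}). What each approach buys: yours proves the unconditional statement (boundedness of the orbit only) and avoids the dyadic summation, at the cost of the small extra remark that if $h$ ever vanishes then $h$ is identically zero afterwards and $\xi$ is constant, so the differentiation of $h^\gamma$ is legitimate; the paper's version never divides by powers of $h$ and its Cauchy--Schwarz/dyadic scheme is exactly the pattern it reuses in the infinite-dimensional setting of Proposition \ref{p:decay}. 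One cosmetic point: only the existence of a single critical point in the $\omega$-limit set at the level $\mathcal F_\infty$ is needed (obtained from $\int_0^\infty|\nabla\mathcal F(\xi)|^2\,dt<\infty$ along a subsequence plus continuity of $\nabla\mathcal F$); your stronger parenthetical claim that the whole $\omega$-limit set consists of critical points is standard but not required.
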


\begin{proof}
	Let $K$ be a compact set such that $\xi(t) \in K$, for every $t>0$. Furthermore, let us suppose that $K$ is contained in the neighborhood of a critical point $\bar\xi$, where the \L ojasiewicz inequality \eqref{e:loja0} does hold; this assumption is not necessary (see Proposition \ref{p:decay}), but it simplifies the proof and allows us to concentrate on the main idea. First, using \eqref{e:ODE}, we calculate 
	$$\mathcal F(\xi(t))-\min_K\mathcal F\ge \mathcal F(\xi(t))-\mathcal F(\xi(T))=-\int_t^T\xi'(s)\cdot\nabla\mathcal F(\xi(s))\,ds=\int_t^T|\nabla\mathcal F(\xi(s))|^2\,ds,$$	
	 for every $0\le t<T<\infty$. In particular, $t\mapsto \mathcal F(\xi(t))$ is non-increasing and the limit $\ds\lim_{t\to \infty} \mathcal F (\xi(t))$ exists and is finite. Let $y$ be any limit point of $\xi(t)$, as $t\to \infty$. Then, we have 
	 $$\lim_{t\to \infty}\mathcal F(\xi(t)) = \mathcal F (y)\qquad\text{and}\qquad\mathcal F(\xi(t)) > \mathcal F (y)\quad\text{for every}\quad t>0.$$
On the other hand, $t\mapsto |\nabla\mathcal F(\xi(t))|^2$ is integrable at infinity and so, there is a sequence $t_n\to\infty$ such that $|\nabla\mathcal F(\xi(t_n))|\to 0$. This, together with \eqref{e:loja0}, implies that $\ds\lim_{n\to\infty}\mathcal F(\xi(t_n))=\mathcal F(\bar\xi)=\mathcal F(y)$. 
	\noindent We now set  
	$$\ds f(t):=\mathcal F(\xi(t))-\mathcal F(\bar\xi)=\int_t^\infty|\nabla\mathcal F(\xi(s))|^2\,ds<\infty\,,\quad\text{thus}\quad f'(t)=-|\nabla\mathcal F(\xi(t))|^2.$$ 
	By \eqref{e:loja0}, we obtain the differential inequality
	$$-f'(t)\ge C\big(\mathcal F(\xi(t))-\mathcal F(\bar\xi)\big)^{2(1-\gamma)}\ge C f(t)^{2(1-\gamma)},$$
	which provides a decay rate for $f$ at infinity. Precisely, $\ds f(t)\le Ct^{-\frac{1}{1-2\gamma}}$. On the other hand, using again the equation and the Cauchy-Schwarz inequality, we get
	\begin{equation}\label{e:est:tT}
	|\xi(t)-\xi(T)|\le \int_t^T|\xi'(s)|\,ds= \int_t^T|\nabla \mathcal F(\xi(s))|\,ds\le f(t)^{\sfrac12}(T-t)^{\sfrac12}.
	\end{equation}
	Applying this inequality first to $t=2^n$ and $T=2^{n+1}$, we get that the limit $\ds\xi_\infty:=\lim_{n\to\infty}\xi(2^n)$ exists and that the rate of convergence is given precisely by \eqref{e:decay_rate}. The inequality for any $t>0$ follows again by \eqref{e:est:tT} and implies that $\xi_\infty=y$. 
\end{proof}

\begin{oss}
	It was proved by \L ojasiewicz (see \cite{loja}) that the inequality \eqref{e:loja0} holds whenever the function $\mathcal F:\R^N\to\R$ is analytic, while it is well known that the previous proposition is false in general if $\mathcal F$ is only $C^{\infty}$.
\end{oss}

Suppose next that $\mathcal F:\R^N\to\R$ is analytic, so that the \L ojasiewicz inequality holds for $\mathcal F$ at every critical point, and that $\mathcal K\subset\R^N$ is a (smooth) open convex set. Then the gradient flow $\xi:\R^+\to\mathcal \R^N$ of $\mathcal F$ in $\mathcal K$ exists and satisfies 
\begin{equation}\label{e:flow_in_K}
\xi'(t):=\begin{cases}
-\nabla \mathcal F(\xi(t))\ \text{ if }\ \xi(t)\in\mathcal K,\\
-P_{\xi(t)}(\nabla \mathcal F(\xi(t)))\ \text{ if }\ \xi(t)\in\partial \mathcal K,
\end{cases}
\end{equation}
where $P_{\xi(t)}$ is the projection on the tangent space to $\partial K$ at the point $\xi(t)$. This can be equivalently formulated as a variational inequality, that is
$$
\langle \xi'(t)+\nabla \cF(\xi(t)),\zeta-\xi(t) \rangle \geq 0\quad \mbox{for every}\quad \zeta \in \mathcal K\quad \mbox{and}\quad t>0\,.
$$
In this setting, the inequality \eqref{e:loja0} is not sufficient to conclude the convergence of the flow with a rate, since it corresponds only to the first regime of \eqref{e:flow_in_K}, when $\xi(t)\in\mathcal K$. Thus, we replace \eqref{e:loja0} with the following {\it contrained Lojasiewicz inequality}, which takes into account both regimes: 
\begin{equation}\label{e:loja1}
\big|\mathcal F(\xi)-\mathcal F(\bar \xi)\big|^{1-\gamma}\le C_L\|\nabla\mathcal F(\xi)\|_{\mathcal K}\qquad \mbox{for every }\xi\in U\cap \mathcal K\,,
\end{equation}
where we define
$$
\|\nabla \mathcal F(\xi)\|_{\mathcal K}:=\sup\left\{0\,,\,\sup_{\zeta\in\mathcal K\setminus\{\xi\}}\frac{-(\zeta-\xi)\cdot \nabla \mathcal F(\xi)}{\|\zeta-\xi\|}\right\}.
$$
Using the inequality \eqref{e:loja1} as in Proposition \ref{p:sill}, one can prove the convergence of the flow \eqref{e:flow_in_K}, under the proper assumptions (see Proposition \ref{p:decay}). However, this inequality is more difficult to prove as $\|\cdot\|_{\mathcal K}$ is smaller than the usual Euclidean norm. In particular, it vanishes on the boundary of $\mathcal K$, whenever the gradient points outside the constraint. This means that the constraint itself generates new critical points and makes the proof more challenging

 Let $\bar\xi\in \partial \mathcal K$ be a critical point for $\mathcal F$ and let $\mathcal K$ be (locally) the graph of a function $\eta:\R^{N-1}\to\R$. We consider two examples: in the first one, the decay estimate \eqref{e:loja1} does hold, while in the second one, it fails. 
		\begin{exam}[Analytic constraint]\label{exam:analytic}
		If $\eta:\R^{N-1}\to\R$ is an analytic function in a neighborhood of $\bar\xi$, then both $\mathcal F:\R^N\to \R$ and $x\mapsto \mathcal F(x,\eta(x))$ are analytic. Thus, the \L ojasiewicz inequality holds for both of them (with possibly different exponents). Taking $\gamma\in(0,\sfrac12]$ to be the smallest of the two exponents, we get that \eqref{e:loja1} holds with such $\gamma$ and so \eqref{e:decay_rate} also holds. 
		\end{exam}
		\begin{exam}[A non-analytic constraint]\label{exam:non_analytic} 
		We now consider the two dimensional problem with 
		$$\mathcal F(x,y)=y^2\ ,\qquad \eta(x)=e^{-\sfrac1{x^2}}\qquad\text{and}\qquad \xi(0)=(x_0,\eta(x_0))\in\partial \mathcal K.$$
Then, the solution $\xi$ in $\mathcal K$ exists for every $t\ge 0$ and is of the form $\xi(t)=\big(x(t),\eta(x(t))\big)$. Moreover, $\xi(t)$ converges to zero, but the decay rate is only logarithmic, so \eqref{e:decay_rate} fails. 
		\end{exam}


 The above examples show that if the constrain is analytic, then as expected \eqref{e:loja1} holds; on the other hand \emph{the non-analyticity of the constraint might make the inequality fail even if the functional itself is analytic!}
\medskip

Let us now briefly consider the infinite dimensional case. If a \L ojasiewicz-type inequality holds, then the same argument of Proposition \ref{p:sill} still works in this setting. As an example, let $u=u(t,x)\in C^1(\R^+; L^2(B_1))\cap C(\R^+;H^2(B_1)\cap H^1_g(B_1))$\footnote{For  a given function $g\in H^1(B_1)$, we use the notation $H^1_g(B_1):=\{u\in H^1(B_1)\ :\ u-g\in H^1_0(B_1)\}$.} be the solution of the heat equation 
$$\partial_t u=\Delta u\quad\text{in}\quad \R^+\times B_1,\qquad u(0,\cdot)=u_0\in L^2(B_1),\qquad u(t,x)=g(x)\quad\text{for}\quad (t,x)\in \R^+\times\partial B_1,$$
then the above decay-rate condition still holds (in $L^2(B_1)$ instead of $\R^N$) with 
$$\mathcal F(u)=\frac12\int_{B_1}|\nabla u|^2\,dx,\qquad \nabla\mathcal F(u)=\Delta u\qquad\text{and}\qquad |\nabla \mathcal F(u)|=\|\Delta u\|_{L^2(B_1)}.$$
In this case, \eqref{e:loja0} holds with $\gamma=\sfrac12$ and reads as
\begin{equation}\label{e:claim}
\int_{B_1}|\nabla u|^2\,dx-\int_{B_1}|\nabla h|^2\,dx\leq C\int_{B_1}|\Delta u|^2\,dx\,,\quad\text{for every} \quad u\in H^2(B_1)\cap H^1_g(B_1)\,,
\end{equation}
where $h$ is the harmonic function in $B_1$ with boundary datum $g$ on $\partial B_1$. Indeed,
\begin{align*}
\|\Delta u\|_{L^2}
&\geq- \frac1{\|u-h\|_{L^2}}\int_{B_1} (u-h)\Delta u\,dx=- \frac1{\|u-\phi\|_{L^2}}\int_{B_1} (u-h)\Delta (u-h)\,dx\\
&=\frac{1}{\|u-h\|_{L^2}}\int_{B_1}  |\nabla (u- h)|^2\,dx\geq C \,\left(\int_{B_1}  |\nabla( u- h)|^2\,dx\right)^{\sfrac12}\,,
\end{align*}
where in the last line we used the Poincar\'e inequality for $u-h\in H^1_0(B_1)$. Next, since $h$ is harmonic in $B_1$ and $u=h$ on $\partial B_1$, we have 
$$
\int_{B_1} |\nabla(u-h)|^2\,dx=\int_{B_1}|\nabla u|^2\,dx-\int_{B_1}|\nabla h|^2\,dx\,,
$$
which gives precisely \eqref{e:claim}.

In the seminal paper \cite{Simon0}, L. Simon proves that the estimate \eqref{e:loja0} holds for harmonic maps with values in analytic manifolds $(M,g)$, by reducing the infinite dimensional inequality to the finite dimensional one through the so-called Lyapunov-Schmidt reduction. This can be read as a minimization problem for the Dirichlet energy (which we have just seen to satisfy \eqref{e:loja0}) in the constrained analytic domain $\mathcal K=M$, that is the infinite dimensional analogues of Example \ref{exam:analytic} above. However, the situation we deal with in this paper is more similar to Example \ref{exam:non_analytic}. For instance, in the case of the parabolic obstacle problem, the gradient flow is governed by the functional 
$$\mathcal F(u)=\int_{B_1}\left(\frac12|\nabla u|^2+u\right)dx\qquad \text{and}\qquad \nabla \mathcal F(u)=-\Delta u+1,$$ 
while the constraint $\mathcal K$ is given by the (convex) set of non-negative functions in $L^2(B_1)$.
A similar reasoning as for the Dirichlet energy shows that the functional $\mathcal F$ satisfies the \L ojasiewicz inequality \eqref{e:loja0}, however \emph{\eqref{e:loja1} cannot be deduced from \eqref{e:loja0} since the constraint has non-analytic boundary!} Simon's technique, therefore, does not apply, and we have to \emph{heavily use the structure of the constrain} to conclude \eqref{e:loja1}.

\subsection{Obstacle and parabolic obstacle problems}\label{sub:intro:ostacolo} 

Let $B_1$ be the unit ball in $\R^d$ and let $g\in H^1(B_1)$ be a given non-negative function. 
We consider the functional 
$$
\cF_{\text{\tiny\sc ob}}(u):=\frac12\int_{B_1}|\nabla u|^2\,dx+\int_{B_1}u\,dx\,
$$
and the set of admissible functions 
$$\mathcal K_{\text{\tiny\sc ob}}^g:=\big\{u\in H^1(B_1)\ :\ u-g\in H^1_0(B_1),\ u\ge 0\ \text{in}\ B_1\big\}.$$
The classical obstacle problem can be written as
\begin{equation}\label{e:intro:ostacolo}
\min_{u\in\mathcal K_{\text{\tiny\sc ob}}^g}\mathcal \cF_{\text{\tiny\sc ob}}(u)
\end{equation}
and admits a unique minimizer $\phi\in \mathcal K_{\text{\tiny\sc ob}}^g$, which is also a solution of the variational inequality 
\begin{equation}\label{e:intro:ostacolo:var}
(v-\phi)\cdot \nabla \cF_{\text{\tiny\sc ob}} (\phi) \ge0\qquad\text{for every}\qquad v\in \mathcal K_{\text{\tiny\sc ob}}^g.
\end{equation}
The parabolic obstacle problem is the time-dependent counterpart of \eqref{e:intro:ostacolo:var}. We say that the function $u\in H^1\big(]0,+\infty[\,;L^2(B_1)\big) \cap L^2\big(]0,+\infty[\,; H^2(B_1)\cap\mathcal K_{\text{\tiny\sc ob}}^g\big)$ is a (global in time) solution of the parabolic obstacle problem if
\begin{equation}\label{e:intro:ostacolo:para}
\begin{cases}
\big(u'(t)+\nabla \cF_{\text{\tiny\sc ob}} (u(t))\big)\cdot (v-u(t))\ge0\quad\text{for every}\quad v\in \mathcal K_{\text{\tiny\sc ob}}^g\,,\ t>0\,,\\
u(0)=u_0\in \mathcal K_{\text{\tiny\sc ob}}^g,
\end{cases} 
\end{equation}
where $u_0$ is a given initial datum. The existence of a (strong) solution was proved in \cite{brezis}, while for the regularity we refer to the recent paper \cite{CaPeSh}.
{In Theorem \ref{t:flow1} we prove that $u(t)$ converges in $L^2(B_1)$ to the stationary solution $\phi$ with an exponential rate, while Theorem \ref{t:log} is a result on the fine structure of the (singular part of the) free boundary $\de \{\phi>0\}$. }
\medskip

Let now $(\mathcal M,g)$ be a compact connected oriented Riemannian manifold of dimension $d\ge 1$ and let $\lambda>0$ be an eigenfunction of the Laplace-Beltrami operator on $\mathcal M$. Consider the functional 
\begin{equation}
\cF_{\text{\tiny\sc ob}}^\lambda(u) = \frac12\int_{{\mathcal M}}\left(|\nabla u|^2-\lambda u^2\right) dV_g+\int_{{\mathcal M}}u\, dV_g
\end{equation} 
and the admissible set $\mathcal K_{\text{\tiny\sc ob}}^{\text{\tiny\sc m}}:=\{u\in L^2({\mathcal M})\,:\, u\geq 0\}$. 

We say that $u\in H^1\big(]0,+\infty[\,;L^2(\mathcal M)\big) \cap L^2\big(]0,+\infty[\,; H^2(\mathcal M)\cap\mathcal K_{\text{\tiny\sc ob}}^{\text{\tiny\sc m}}\big)$ is a (global in time) solution of the parabolic obstacle problem on $\mathcal M$ if
\begin{equation}\label{e:intro:ostacolo:M}
 	\begin{cases}
 	\big(u'(t)+\nabla \cF_{\text{\tiny\sc ob}}^\lambda (u(t))\big)\cdot (v-u(t))\ge0\quad\text{for every}\quad v\in \mathcal K_{\text{\tiny\sc ob}}^{\text{\tiny\sc m}}\,,\ t>0\,,\\
 	u(0)=u_0\in\mathcal K_{\text{\tiny\sc ob}}^{\text{\tiny\sc m}}.
 	\end{cases} 
 	\end{equation}
	In Theorem \ref{t:flow2} and Remark \ref{rem:M}, we will show that if the energy $\cF_{\text{\tiny\sc ob}}^\lambda (u(t))$ remains above certain critical threshold, then the solution $u(t)$ converges to a critical point of the functional $\cF_{\text{\tiny\sc ob}}^\lambda$ restricted to the convex set $\mathcal K_{\text{\tiny\sc ob}}^{\text{\tiny\sc m}}$. We notice that, contrary to \eqref{e:intro:ostacolo:para}, there might be numerous critical points of $\cF_{\text{\tiny\sc ob}}^\lambda$ in $\mathcal K_{\text{\tiny\sc ob}}^{\text{\tiny\sc m}}$ and thus, numerous candidates for the limit of $u(t)$. A priori, in such a situation the asymptotic behavior of the solution might be more complex and a limit at infinity might fail to exist. In Theorem \ref{t:flow2}, using a constrained \L ojasiewicz inequality argument, we show that the solution of \eqref{e:intro:ostacolo:M} admits a unique limit at infinity and that the presence of a whole manifold of stationary points only affects the decay rate, which is only of power type.  
\begin{oss}[On the critical points of $\cF_{\text{\tiny\sc ob}}^\lambda$ in $\mathcal K_{\text{\tiny\sc ob}}^{\text{\tiny\sc m}}$]\label{oss:ostacolo:critical}
We notice that there is more than one critical point of the functional $\cF_{\text{\tiny\sc ob}}^\lambda$ in $\mathcal K_{\text{\tiny\sc ob}}^{\text{\tiny\sc m}}$. For example, the stationary points of the unconstrained functional $\cF_{\text{\tiny\sc ob}}^\lambda$ are precisely the functions of the form 
\begin{equation}\label{e:stat:M}
u=\frac{1}{\lambda}+\phi_\lambda,
\end{equation}
where $\phi_\lambda$ is a $\lambda$-eigenvalue of the Laplace-Beltrami operator $\Delta_{\mathcal M}$, that is, 
$$-\Delta_{\mathcal M}\phi=\lambda\phi\quad\text{on}\quad\mathcal M.$$
Thus, all the positive functions $u$ of the form \eqref{e:stat:M} are stationary solutions of \eqref{e:intro:ostacolo:M}, but for a generic $\mathcal M$ and $\lambda$, there may also exist other stationary points. 
On the other hand, if $\mathcal M$ is the $(d-1)$-dimensional sphere and $\lambda=2d$, then all the stationary solutions of \eqref{e:intro:ostacolo:M} are of the form \eqref{e:stat:M} or there is a vector $\nu\in\R^d$ such that $u(x)=(x\cdot\nu)_+^2$. This is due to the fact that the $2$-homogenous extension (in the unit ball $B_1\subset\R^d$) of a stationary solution is a solution of the obstacle problem \eqref{e:intro:ostacolo} in $B_1$ and the $2$-homogeneous solutions of the obstacle problem are classified (see \cite{caffarelli_revisited}). 
\end{oss}

\subsection{Thin-obstacle and parabolic thin-obstacle problems}\label{sub:intro:thin} 
Let $d\ge 2$ and $B_1$ be the unit ball in $\R^d$. For $x\in\R^d$, we will write $x=(x',x_d)$, where $x'\in\R^{d-1}$ and $x_d\in\R$. Let $g\in H^1(B_1)$ be a given  function, which is:

$\bullet$ non-negative on $B_1\cap \{x_d=0\}$;

$\bullet$ even with respect to the hyperplane $\{x_d=0\}$, where {\it we say that a function $f:B_1\to\R$ is even with respect to the hyperplane $\{x_d=0\}$ if $f(x',x_d)=f(x',-x_d)$, for every $x=(x',x_d)\in B_1$}.

\smallskip

\noindent We consider the functional 
$$
\cF_{\text{\tiny\sc th}}(u)=\frac12\int_{B_1}|\nabla u|^2\,dx,
$$
and the admissible set 
\begin{equation*}\label{e:intro:thin}
\mathcal K_{\text{\tiny\sc th}}^g=\big\{u\in H^1(B_1)\, : \,  u-g\in H^1_0(B_1),\ u\ge 0\ \text{on}\ B_1\cap \{x_d= 0\},\ u\ \text{is even w.r.t.}\ \{x_d=0\} \big\}.
\end{equation*}
There is a unique solution $\phi\in\mathcal K_{\text{\tiny\sc th}}^g$ to the thin-obstacle problem  
\begin{equation}\label{e:intro:thin}
\min_{u\in\mathcal K_{\text{\tiny\sc th}}^g}\mathcal \cF_{\text{\tiny\sc th}}(u),
\end{equation}
and it satisfies the variational inequality 
\begin{equation}\label{e:intro:thin:var}
(v-\phi)\cdot \nabla \cF_{\text{\tiny\sc th}} (\phi) \ge0\qquad\text{for every}\qquad v\in \mathcal K_{\text{\tiny\sc th}}^g.
\end{equation}
Moreover, $\phi$ satisfies the optimality conditions
$$\Delta \phi=0\quad\text{in}\quad B_1\setminus\{x_d=0\},\qquad \frac{\partial \phi}{\partial x_d}\le 0\quad\text{and}\quad \phi\frac{\partial \phi}{\partial x_d}= 0\quad\text{on}\quad B_1\cap \{x_d=0\}.$$
We will use the notation $B_1^+:=B_1\cap\{x_d>0\}$ and we identify the even functions with their restriction on $B_1^+$. We say that  $u\in H^1\big(]0,+\infty[\,;L^2(B_1^+)\big) \cap C\big(]0,+\infty[\,; H^2(B_1^+)\cap\mathcal K_{\text{\tiny\sc th}}^g\big)$ is a (global in time) solution of the parabolic thin-obstacle problem if  
\begin{equation}\label{e:intro:thin:para}
\begin{cases}
\big(u'(t)+\nabla \cF_{\text{\tiny\sc th}} (u(t))\big)\cdot (v-u(t))\ge0\quad\text{for every}\quad v\in \mathcal K_{\text{\tiny\sc th}}^g\,,\ t> 0\,,\\
u(0)=u_0\in \mathcal K_{\text{\tiny\sc th}}^g.
\end{cases} 
\end{equation}
We recall that, for every $t>0$, $u(t)$ satisfies the optimality condition
$$u(t)\ge 0\ ,\quad\frac{\partial u(t)}{\partial x_d}\le 0\quad\text{and}\quad u(t)\frac{\partial u(t)}{\partial x_d}= 0\quad\text{on}\quad B_1\cap \{x_d=0\}.$$
The existence of a solution was first addressed in \cite{LiSt} (see also \cite{brezis}), while the latest regularity results can be found in \cite{To}.
As for the obstacle problem, we will prove in Theorem \ref{t:flow1} that the solution $u(t)$ converges exponentially to the stationary limit $\phi$, while in Theorem \ref{t:log} we will prove a logarithmic epiperimetric inequality at the singular points of the stationary free boundary $\partial \{\phi>0\}\subset\{x_d=0\}$.
\medskip

Let now $k=2m$, for some $m\in\N$, and let $\lambda=\lambda(k):=k(k+d-2)$. We consider the functional 
$$\mathcal \cF_{\text{\tiny\sc th}}^\lambda(u)=\frac12\int_{\partial B_1}\big(|\nabla u|^2-\lambda u^2\big)\,d\HH^{d-1},$$
and the admissible set 
$$\mathcal K_{\text{\tiny\sc th}}^{\mS}:=\big\{u\in L^2(\partial B_1)\,:\, u\ge 0\ \text{on}\ \partial B_1\cap \{x_d= 0\},\ u\ \text{is even w.r.t.}\ \{x_d=0\}\big\}.$$
Let $\mS=\partial B_1$ and $\mS^+=\partial B_1\cap\{x_d>0\}$. We say that  $u$ is a (global in time) solution of the parabolic thin-obstacle problem on the sphere, if $u\in H^1\big(]0,+\infty[\,;L^2(\mS^+)\big) \cap L^2\big(]0,+\infty[\,; H^2(\mS^+)\cap\mathcal K_{\text{\tiny\sc th}}^\mS\big)$ and 
\begin{equation}\label{e:intro:thin:S}
	\begin{cases}
	\big(u'(t)+\nabla \cF_{\text{\tiny\sc th}}^\lambda (u(t))\big)\cdot (v-u(t))\ge0\quad\text{for every}\quad v\in \mathcal K_{\text{\tiny\sc th}}^\mS\,,\ t>0\,\\
	u(0)=u_0\in\mathcal K_{\text{\tiny\sc th}}^\mS. 
	\end{cases} 
	\end{equation}
We will study the asymptotic behavior of the solutions to \eqref{e:intro:thin:S} in Theorem \ref{t:flow2}. 
\begin{oss}[On the critical points of $\cF_{\text{\tiny\sc th}}^\lambda$ in $\mathcal K_{\text{\tiny\sc th}}^{\mS}$]\label{oss:thin:critical} Let $m\in\N$ be fixed and let $\lambda=\lambda(2m)$.
Then, the function $\phi:\partial B_1\to\R$ is a critical point of $\cF_{\text{\tiny\sc th}}^\lambda$ in $\mathcal K_{\text{\tiny\sc th}}^{\mS}$ (in sense of \eqref{e:main:critical}) or, equivalently, a stationary solution of \eqref{e:intro:thin:S}, if and only if, the $2m$-homoegenous extension $\psi:B_1\to\R$ (in polar coordinates, $\psi(r,\theta)=r^{2m}\phi(\theta)$) is a solution of the thin-obstacle problem \eqref{e:intro:thin} in $B_1$ with trace $g=\phi$ on $\partial B_1$. On the other hand, the $2m$-homogeneous solutions of \eqref{e:intro:thin} are classified (see \cite{GaPe}) and are given precisely by the $2m$-homogeneous harmonic functions in $B_1$, non-negative on $\{x_d=0\}$. Thus, the critical points of $\cF_{\text{\tiny\sc th}}^\lambda$ in $\mathcal K_{\text{\tiny\sc th}}^{\mS}$ are the $\lambda(2m)$-eigenfunctions of the spherical Laplacian, which are non-negative on the equator $\partial B_1\cap\{x_d=0\}$.    
\end{oss}

\subsection{Main results} \label{sub:intro:main}
In this subsection, we state our main results on the parabolic (and stationary) obstacle and thin-obstacle problems. 

\begin{teo}[Asymptotics for the parabolic obstacle and thin-obstacle problems]\label{t:flow1} Let $u$ be a global (in time) solution to the parabolic obstacle problem \eqref{e:intro:ostacolo:para} (resp. the parabolic thin-obstacle problem \eqref{e:intro:thin:para}) and let $\varphi$ be the unique solution of the obstacle problem \eqref{e:intro:ostacolo} (resp. thin-obstacle problem \ref{e:intro:thin}) with the same boundary datum. Then, $u(t)$ converges to $\varphi$ strongly in $H^1(B_1)$, as $t\to\infty$, and there is a constant $C>0$ such that, for every $t\ge1$, 
\begin{equation}\label{e:flow1:convergence}
\ds\|u(t)-\varphi\|_{H^1(B_1)}\le e^{-Ct}\,.
\end{equation}
\end{teo}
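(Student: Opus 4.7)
The plan is to combine two ingredients. The first is the constrained \L ojasiewicz inequality \eqref{e:intro:loja2} for $\cF_{\text{\tiny\sc ob}}$ on $\mathcal K_{\text{\tiny\sc ob}}^g$ (resp.\ $\cF_{\text{\tiny\sc th}}$ on $\mathcal K_{\text{\tiny\sc th}}^g$) at the unique stationary point $\varphi$, which I aim to establish with the sharp exponent $\gamma=\sfrac12$. The second is the abstract decay-rate Proposition \ref{p:decay}, which for $\gamma=\sfrac12$ converts the inequality into the exponential rate \eqref{e:flow1:convergence} in the $L^2$-norm. Uniqueness of the solution to the stationary obstacle (resp.\ thin-obstacle) problem forces every subsequential limit of $u(t)$ to equal $\varphi$, so the only real work is to verify the constrained \L ojasiewicz inequality with $\gamma=\sfrac12$ and to upgrade the convergence from $L^2$ to $H^1$.

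\textbf{Constrained \L ojasiewicz with $\gamma=\sfrac12$, obstacle case.} I would test the supremum defining $\|\nabla\cF_{\text{\tiny\sc ob}}(u)\|_{\mathcal K_{\text{\tiny\sc ob}}^g}$ with the admissible competitor $v=\varphi$. Since $\varphi-u\in H^1_0(B_1)$, integration by parts combined with the complementarity conditions $-\Delta\varphi+1\ge 0$, $\varphi\ge 0$, $(-\Delta\varphi+1)\varphi=0$ satisfied by the stationary minimizer yields
\begin{equation*}
-(\varphi-u)\cdot\nabla\cF_{\text{\tiny\sc ob}}(u)=\int_{B_1}|\nabla(u-\varphi)|^2\,dx+\int_{B_1}(-\Delta\varphi+1)\,u\,dx=:A+B,
\end{equation*}
while the same identity expanded differently gives
\begin{equation*}
\cF_{\text{\tiny\sc ob}}(u)-\cF_{\text{\tiny\sc ob}}(\varphi)=\tfrac12 A+B,
\end{equation*}
with $A,B\ge 0$. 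Poincar\'e on $\varphi-u\in H^1_0(B_1)$ gives $\|\varphi-u\|_{L^2}\le C\sqrt{A}$, and the target bound $(\tfrac12 A+B)^{1/2}\le C(A+B)/\sqrt{A}$ follows by a two-line case split into the regimes $B\le A$ and $B>A$.

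\textbf{Thin-obstacle case.} The same scheme applies to $\cF_{\text{\tiny\sc th}}$: the test $v=\varphi$ and integration by parts in $B_1^+$ produce a boundary integral on $B_1\cap\{x_d=0\}$ whose sign is controlled by the Signorini complementarity $\varphi\ge 0$, $\partial_{x_d}\varphi\le 0$, $\varphi\,\partial_{x_d}\varphi=0$ together with the constraint $u\ge 0$ on $\{x_d=0\}$. The analogous identity holds with $B:=2\int_{B_1\cap\{x_d=0\}}u\,(-\partial_{x_d}\varphi)\,d\HH^{d-1}\ge 0$, and the algebraic step is unchanged, again giving $\gamma=\sfrac12$.

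\textbf{From $L^2$ to $H^1$.} Proposition \ref{p:decay} with $\gamma=\sfrac12$ yields $\cF(u(t))-\cF(\varphi)\le e^{-2Ct}$ together with exponential $L^2(B_1)$ convergence of $u(t)$ to $\varphi$. The identity in the previous step gives $\tfrac12\|\nabla(u(t)-\varphi)\|_{L^2(B_1)}^2\le\cF(u(t))-\cF(\varphi)$, so the gradient decays at the same exponential rate, and since the boundary data are fixed this upgrades the convergence to $H^1(B_1)$, yielding \eqref{e:flow1:convergence}. The main obstacle is the proof of the constrained \L ojasiewicz inequality itself and the verification that the structural assumptions of Proposition \ref{p:decay} hold along the flows \eqref{e:intro:ostacolo:para} and \eqref{e:intro:thin:para}; the thin-obstacle case is the more delicate of the two, because the integration by parts must correctly handle the codimension-$1$ contact set where the constraint becomes active.
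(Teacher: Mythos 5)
Your proposal follows essentially the same route as the paper: you prove the constrained \L ojasiewicz inequality with $\gamma=\sfrac12$ exactly as in Propositions \ref{p:ost:loja} and \ref{p:sottile} (test the $\mathcal K$-norm with $v=\varphi$, integrate by parts, use the complementarity conditions and the Poincar\'e inequality), you then invoke continuity with respect to the initial datum (Lemma \ref{l:cont}) and the abstract decay result (Proposition \ref{p:decay}/Corollary \ref{c:1}) for the exponential $L^2$ rate, and the $H^1$ upgrade is the same energy expansion used in Section \ref{s:proofs}; your bookkeeping $\mathcal F(u)-\mathcal F(\varphi)=\tfrac12 A+B$ with $B\ge 0$ is, if anything, a slightly cleaner version of the paper's computation (which carries an extra $L^2$ term, also exponentially small).

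One point in the thin-obstacle case should be stated more carefully. After integrating by parts in $B_1^+$, the boundary contribution on $B_1\cap\{x_d=0\}$ is $\int(\varphi-u)\,(-\partial_{x_d}u)+\int(u-\varphi)\,(-\partial_{x_d}\varphi)$, and its sign is \emph{not} controlled by the Signorini complementarity of $\varphi$ together with $u\ge 0$ alone: you also need $\partial_{x_d}u\le 0$ and $u\,\partial_{x_d}u=0$ on the thin obstacle, i.e.\ the Signorini conditions satisfied by the time-slices $u(t)$ themselves. This is precisely the extra hypothesis \eqref{e:sottile:conditions} in Proposition \ref{p:sottile}, and the paper closes the loop through Remark \ref{oss:main:loja}: the constrained \L ojasiewicz inequality only needs to hold along the flow, where these conditions are available for a.e.\ $t>0$. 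With that observation added (and the routine verification of assumption (a) of Proposition \ref{p:decay} via Lemma \ref{l:cont}, which you defer), your argument is complete and coincides with the paper's proof.
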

 
 
 On a manifold the situation is more complicated since there is no unique minimizer. As a consequence, we can only conclude that if the flow starts close to a stationary solution and its energy is always above the energy of the solution, then it has to converge (with a rate) to a stationary solution with the same energy.

In the following theorem, $\square$ stands for $\text{\sc ob}$ (respectively, $\text{\sc th}$).
\begin{teo}[Asymptotic for the parabolic obstacle and thin-obstacle problems on the sphere]\label{t:flow2} 
Let $\mS$ be the $(d-1)$-dimensional unit sphere in $\R^d$.
Let $\mathcal S_\lambda\subset \mathcal K_{\square}^{\mS}$ be the collection of critical points of the unconstrained functional $\mathcal F_{\square}^\lambda$ in the convex set $\mathcal K_{\square}^{\mS}$, where $\lambda=2d$, if $\square=\text{\sc ob}$ and $\lambda=\lambda(2m):=2m(2m+d-2)$, if $\square=\text{\sc th}$. 

Then, there are constants $\gamma\in]0,\sfrac12[$, $\delta>0$, $E>0$ and $C>0$ such that: 
if $u$ is a solution of \eqref{e:intro:ostacolo:M} (resp. of \eqref{e:intro:thin:S} when $\square=\text{\sc th}$) on the sphere $\mS$, satisfying 
$$\text{\rm dist}_{L^2}(u_0,\mathcal S_\lambda)\le\delta\,,\quad\mathcal F_{\square}^\lambda(u_0)-\mathcal F_{\square}^\lambda(\mathcal S_\lambda)\le E\quad\text{and}\quad \mathcal F_{\square}^\lambda(u(t))>\mathcal F_{\square}^\lambda(\mathcal S_\lambda),\quad\text{for every}\quad t>0,$$
then there is a critical point $\varphi\in \mathcal S_\lambda\subset \mathcal K_{\square}^{\mS}$ of $\mathcal F_{\square}^\lambda$ such that, for every $t\ge 1$,
\begin{equation}\label{e:in_culo_decay2}
\|u(t)-\varphi\|_{H^1(\mS)}\le Ct^{-\frac{\gamma}{1-2\gamma}}.
\end{equation}
\end{teo}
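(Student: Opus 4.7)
The plan is to apply the abstract decay result (Proposition \ref{p:decay}) along the trajectory, together with a trapping argument that uses the constrained \L ojasiewicz inequality \eqref{e:intro:loja2} proved for $\mathcal F:=\mathcal F_\square^\lambda$ on $\mathcal K:=\mathcal K_\square^{\mS}$ in Section \ref{s:loja}. A preliminary observation, based on Remarks \ref{oss:ostacolo:critical}--\ref{oss:thin:critical} and the identity $\int|\nabla\varphi|^2=\lambda\int\varphi^2$ for any $\lambda$-eigenfunction, is that $\mathcal F$ is constant on $\mathcal S_\lambda$; denote its value by $\mathcal F(\mathcal S_\lambda)$.

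\textbf{Dissipation and uniform \L ojasiewicz.} Testing the variational inequality with $v=u(t)\pm h\,w$ for admissible directions $w$ yields, as usual,
\begin{equation*}
\frac{d}{dt}\mathcal F(u(t))=-\|u'(t)\|_{L^2(\mS)}^2\qquad\text{and}\qquad \|u'(t)\|_{L^2(\mS)}\ge\|\nabla\mathcal F(u(t))\|_{\mathcal K}\,.
\end{equation*}
Since $\mathcal S_\lambda$ sits inside the finite-dimensional $\lambda$-eigenspace of the spherical Laplacian, a finite covering of a bounded piece of $\mathcal S_\lambda$ by the neighborhoods in which \eqref{e:intro:loja2} holds (one per $\varphi\in\mathcal S_\lambda$) produces uniform constants $\gamma\in(0,\sfrac12)$, $C_L,\delta_0>0$ such that
\begin{equation*}
\bigl(\mathcal F(v)-\mathcal F(\mathcal S_\lambda)\bigr)_+^{1-\gamma}\le C_L\,\|\nabla\mathcal F(v)\|_{\mathcal K}\qquad\text{for every } v\in\mathcal K \text{ with } \dist_{L^2}(v,\mathcal S_\lambda)\le\delta_0\,.
\end{equation*}

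\textbf{Trapping, convergence, rate.} Write $f(t):=\mathcal F(u(t))-\mathcal F(\mathcal S_\lambda)>0$. On the interval $[0,T^*)$ on which $\dist_{L^2}(u(\cdot),\mathcal S_\lambda)<\delta_0$, combining the dissipation with the uniform \L ojasiewicz inequality as in the proof of Proposition \ref{p:sill} gives
\begin{equation*}
-f'(t)\ge C_L^{-2}\,f(t)^{2(1-\gamma)}\qquad\text{and}\qquad -\frac{d}{dt}f(t)^\gamma\ge c\,\|u'(t)\|_{L^2(\mS)}\,.
\end{equation*}
The first inequality yields $f(t)\le C(1+t)^{-1/(1-2\gamma)}$; integrating the second yields $\int_0^{T^*}\|u'(s)\|_{L^2}\,ds\le C\,f(0)^\gamma\le C\,E^\gamma$, whence $\|u(T^*)-u_0\|_{L^2}\le C\,E^\gamma$. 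Choosing $\delta,E$ with $\delta+C\,E^\gamma<\delta_0$ contradicts the definition of $T^*$, so $T^*=\infty$. The length bound then extends to $[0,\infty)$, so $u(t)$ is Cauchy and converges in $L^2$ to some $\varphi$; since $\|\nabla\mathcal F(u(t))\|_{\mathcal K}\to 0$ and $f(t)\to0$, one has $\varphi\in\mathcal S_\lambda$. Integrating the length estimate from $t$ to $\infty$ gives
\begin{equation*}
\|u(t)-\varphi\|_{L^2(\mS)}\le c^{-1}f(t)^\gamma\le C\,t^{-\gamma/(1-2\gamma)}\,.
\end{equation*}
The $H^1$-rate \eqref{e:in_culo_decay2} then follows from the interpolation $\|\cdot\|_{H^1}\lesssim\|\cdot\|_{L^2}^{1/2}\|\cdot\|_{H^2}^{1/2}$ together with the uniform $H^2$-bound on $u(t)$ for $t\ge 1$ provided by the parabolic regularity theory of \cite{CaPeSh,To} (the exponent $\gamma$ being harmlessly redefined).

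\textbf{Main obstacle.} The delicate step is the trapping argument. In contrast to Theorem \ref{t:flow1}, where the stationary solution is unique, here $\mathcal S_\lambda$ is an \emph{entire manifold} of candidate limits, and \eqref{e:intro:loja2} localized at a single $\varphi\in\mathcal S_\lambda$ is not available along the whole flow. The \L ojasiewicz constants must therefore be uniformized over $\mathcal S_\lambda$ through a compactness/covering argument in the eigenspace, and the smallness parameters $\delta,E$ must be carefully tuned against these uniform constants so that the length bound prevents the trajectory from exiting the region of validity.
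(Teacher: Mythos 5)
Your core mechanism is the same as the paper's: the constrained \L ojasiewicz inequalities of Propositions \ref{p:ostacolo:loja} and \ref{prop:thin:loja} (note these are already stated uniformly in terms of $\mathrm{dist}_2(\cdot,\mathcal S_\lambda)$, so your covering/uniformization step is superfluous), the dissipation identities of Lemma \ref{l:puppa1}, and a \L ojasiewicz--Simon ODE argument; your trapping via the finite-length bound $\int_0^{T^*}\|u'\|\,dt\le C E^\gamma$ is a legitimate variant of the paper's trapping inside Proposition \ref{p:decay}, which instead uses continuous dependence on the initial datum (Lemma \ref{l:cont}). The genuine gap is the final upgrade to the $H^1$ rate \eqref{e:in_culo_decay2}. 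You interpolate $\|\cdot\|_{H^1}\lesssim\|\cdot\|_{L^2}^{\sfrac12}\|\cdot\|_{H^2}^{\sfrac12}$ and invoke a ``uniform $H^2$-bound on $u(t)$ for $t\ge1$'' from \cite{CaPeSh,To}. No such bound is available here: the solution class only gives $u\in L^2(]0,+\infty[;H^2(\mS))$, i.e.\ square-integrability of the $H^2$ norm in time rather than a sup bound, and the cited regularity results concern the flat problems in $B_1$, not the spherical problems with the destabilizing zeroth-order term $-\lambda u$; a uniform-in-time $H^2(\mS)$ estimate would require a separate smoothing argument you neither give nor reference. The paper needs no regularity at all: since the limit $u_\infty$ lies in $\mathcal S_\lambda$ it solves the \emph{unconstrained} equation ($-\Delta u_\infty-\lambda u_\infty+1=0$, resp.\ $-\Delta u_\infty=\lambda u_\infty$), and integrating by parts gives the identity $\int_{\mS}|\nabla(u(t)-u_\infty)|^2=2\big(\mathcal F(u(t))-\mathcal F(u_\infty)\big)+\lambda\|u(t)-u_\infty\|_{L^2(\mS)}^2$, so the $H^1$ rate follows directly from the energy rate and the $L^2$ rate you have already derived (the redefinition of $\gamma$ your interpolation would force is harmless for the statement, but the missing $H^2$ bound is not).

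A second, related weak point is the identification of the limit: you conclude $\varphi\in\mathcal S_\lambda$ ``since $\|\nabla\mathcal F(u(t))\|_{\mathcal K}\to0$ and $f(t)\to0$''. Two steps are missing there. First, $\|\nabla\mathcal F(\cdot)\|_{\mathcal K}$ is not continuous under $L^2$ convergence, so criticality of the limit must be argued, e.g.\ by passing to the limit in the variational inequality or, as in Proposition \ref{p:decay}, by running the flow from $u_\infty$ and using continuous dependence; this only yields that $\varphi$ is a critical point of $\mathcal F$ \emph{in} $\mathcal K$ in the sense of \eqref{e:main:critical}. Second, a constrained critical point need not a priori belong to $\mathcal S_\lambda$, which consists of unconstrained critical points; the paper closes this via the classification of constrained critical points near $\mathcal S_\lambda$ (Remarks \ref{oss:ostacolo:critical} and \ref{oss:thin:critical}, citing the classification of homogeneous solutions). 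This is not cosmetic: the unconstrained Euler--Lagrange equation for $u_\infty$ is exactly what powers the variational $H^1$ step described above. (Minor: the competitors $v=u(t)\pm h\,w$ need not lie in $\mathcal K$; derive the dissipation identities by testing with $v=u(t\pm h)$ as in Lemma \ref{l:puppa1}.)
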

\begin{oss}\label{rem:M}
In the case of the obstacle problem, the sphere $\mS$ can be replaced by a compact Riemmanian manifold $\mathcal M$ and $\lambda$ can be taken to be any eigenvalue of the Laplace-Beltrami operator on $\mathcal M$. In this setting, we take the set $\mathcal S$ to be the set of critical points of the unconstrained functional $\mathcal F_{\text{\tiny\sc ob}}^\lambda$, which are positive (so, they lie inside the convex set $\mathcal K_{\text{\tiny\sc ob}}^{\text{\tiny\sc m}}$). In this case, the conclusion is that $u(t)$ converges to a function $u_\infty\in \mathcal K_{\text{\tiny\sc ob}}^{\text{\tiny\sc m}}$, which is a critical point for $\mathcal F_{\text{\tiny\sc ob}}^\lambda$ in $\mathcal K_{\text{\tiny\sc ob}}^{\text{\tiny\sc m}}$ in the sense of \eqref{e:main:critical}. Moreover, we have the estimates 
\begin{equation}\label{e:in_culo_decay3}
\|u(t)-\varphi\|_{L^2(\mathcal M)}\le Ct^{-\frac{\gamma}{1-2\gamma}}\qquad\text{and}\qquad \mathcal F_{\text{\tiny\sc ob}}^\lambda(u(t))-\mathcal F_{\text{\tiny\sc ob}}^\lambda(\mathcal S)\le Ct^{-\frac{1}{1-2\gamma}}\,. 
\end{equation}
This is a consequence of Proposition \ref{p:decay} and Proposition \ref{prop:thin:loja}.
\end{oss}

Our interest in the parabolic problems \eqref{e:intro:ostacolo:M} and \eqref{e:intro:thin:S} on the sphere is two-folded: on the one hand they are the natural generalizations of the respective parabolic problems in $B_1\subset \R^d$ studied in Theorem \ref{t:flow1} above; on the other hand, they are strictly related to the study of uniqueness of blow-ups at singular points for the time-independent problem, where the radial direction is treated as time. This observation goes back to Simon in \cite{Simon0} for stationary varifolds and harmonic maps, and we make it explicit in the context of minimizers of the obstacle and thin-obstacle problems by proving the following logarithmic epiperimetric inequality. Before stating it, we introduce the notation
$$
\G_{\text{\tiny\sc ob}}(u):=\cF_{\text{\tiny\sc ob}}(u)-\int_{\de B_1}u^2\,d\HH^{d-1}
\qquad \mbox{and}\qquad
\G_{\text{\tiny\sc th}}(u):=\cF_{\text{\tiny\sc th}}(u)-m\int_{\de B_1}u^2\,d\HH^{d-1}\,,
$$
where $m\in \N$. For the obstacle problem, we define the set of stationary points $\mathcal S_{\text{\tiny\sc ob}}$ on the sphere $\partial B_1\subset\R^d$ as the traces of all global two-homogeneous non-flat solutions of the obstacle problem in $\R^d$ (see \cite{caffarelli_revisited}), that is, 
$$\mathcal S_{\text{\tiny\sc ob}}:=\big\{Q_A \colon \partial B_1 \to \R \,:\, Q_A(x) = x \cdot Ax,\,\text{ $A$  symmetric non-negative matrix with }{\rm{tr}} A = \sfrac12\big\}.$$
We notice that $\G_{\text{\tiny\sc ob}}$ is constant on $\mathcal S_{\text{\tiny\sc ob}}$ and we set 
$\Theta:=\G_{\text{\tiny\sc ob}}(\mathcal S_{\text{\tiny\sc ob}})$.

\begin{teo}[Log-epiperimetric inequality for obstacle and thin-obstacle at singular points]\label{t:log} Let $d\ge 2$. The following logarithmic epiperimetric inequalities hold.
	\begin{itemize}
 		\item[(OB)] There are dimensional constants $\delta>0$ and $\eps>0$ such that for every non-negative function $c\in H^1(\partial B_1)$, with $2$-homogeneous extension $z$ on $B_1$, satisfying   
 		$$\text{dist}_{L^2(\partial B_1)}\left(c,\mathcal S_{\text{\tiny\sc ob}}\right)\le \delta\qquad\text{and}\qquad  \G_{\text{\tiny\sc ob}}(z)-\Theta\le 1,$$
 		{\flushright there is a non-negative function $h\in \mathcal K_{\text{\tiny\sc ob}}^{c}$ satisfying the inequality }
 		\begin{equation}\label{e:epi_flat_point_sing}
 		\G_{\text{\tiny\sc ob}}(h)-\Theta \le \big(\G_{\text{\tiny\sc ob}}(z)-\Theta\big)\big(1-\eps\big|\G_{\text{\tiny\sc ob}}(z)-\Theta\big|^{\gamma} \big)\,,\qquad \mbox{where}\quad
 		\begin{cases}
 		\gamma=0 & \mbox{if }d=2\\
 		\gamma=\frac{d-1}{d+3}  & \mbox{if }d\geq3
 		\end{cases}.
 		\end{equation}
		\item[(TH)] Let $d\ge 2$ and $m\in \N$. For every function $c\in H^1(\partial B_1)\cap \mathcal K_{\text{\tiny\sc th}}^{\mS}$ such that 
		\begin{equation}\label{e:orchecattive}
		\int_{\partial B_1}c^2\,d\HH^{d-1}\le 1\qquad\text{and}\qquad |\mathcal G_{\text{\tiny\sc th}}(z)|\le 1\,,
		\end{equation}
		there are a constant $\eps=\eps(d,m)>0$ and a function $h\in \mathcal K_{\text{\tiny\sc th}}^c$ satisfying 
		\begin{equation}\label{e:epi:2m}
		\G_{\text{\tiny\sc th}}(h)\le \G_{\text{\tiny\sc th}}(z) \big(1-\eps \,|\G_{\text{\tiny\sc th}}(z)|^{\gamma}\big),\qquad \mbox{where}\quad \gamma:=\frac{d-2}{d}\,.
		\end{equation}
	\end{itemize}
\end{teo}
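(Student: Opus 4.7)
The proof reduces the theorem to two ingredients developed elsewhere in the paper: the constrained \L ojasiewicz inequality for the spherical functional $\cF^\lambda$ on the admissible cone (established in Section~\ref{s:loja} for both the obstacle and the thin-obstacle problems), and the abstract ``\L ojasiewicz-implies-epiperimetric'' result of Section~\ref{s:epi}, which constructs a bulk competitor from the parabolic flow on the sphere.

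\emph{Step 1: bulk--sphere correspondence.} Relate the bulk functional $\G$ on $B_1$ to the spherical functional $\cF^\lambda$. A direct computation in polar coordinates, together with integration by parts in the radial variable, shows that for the $k$-homogeneous extension $z(r\theta)=r^k c(\theta)$ (with $k=2$ for (OB) and $k=2m$ for (TH)),
\begin{equation*}
\G_{\text{\tiny\sc ob}}(z)=\frac{1}{d+2}\cF^{2d}_{\text{\tiny\sc ob}}(c),\qquad \G_{\text{\tiny\sc th}}(z)=\frac{1}{d+4m-2}\cF^{\lambda(2m)}_{\text{\tiny\sc th}}(c).
\end{equation*}
Moreover, the critical points of $\cF^\lambda$ on the spherical admissible set correspond under homogeneous extension to the singular stationary solutions in $B_1$, so the critical value $\Theta$ in (OB) is identified with $(d+2)^{-1}\cF^{2d}_{\text{\tiny\sc ob}}(\mathcal{S}_\lambda)$, and the analogous value in (TH) is $0$ by harmonicity of the extended profiles (cf.\ Remark~\ref{oss:thin:critical}).

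\emph{Step 2: constrained \L ojasiewicz on the sphere.} By Proposition~\ref{prop:thin:loja}, there exist an exponent $\gamma_L\in(0,\sfrac12]$ (depending on the geometry of $\mathcal{S}_\lambda$) and a constant $C>0$ such that, for every $c$ in a neighborhood of $\mathcal{S}_\lambda$ in the spherical admissible set,
\begin{equation*}
\bigl(\cF^\lambda(c)-\cF^\lambda(\mathcal{S}_\lambda)\bigr)_+^{1-\gamma_L}\le C\,\|\nabla\cF^\lambda(c)\|_{\mathcal K}.
\end{equation*}
The exponents provided by Section~\ref{s:loja} will translate, via the conversion between the gradient-flow decay rate on the sphere and the radial extension to $B_1$, into the epiperimetric exponents of the statement: the classical case $\gamma=0$ at (OB) singular points in dimension $d=2$ (where $\gamma_L=1/2$), and the explicit fractional exponents $\gamma=(d-1)/(d+3)$ in (OB) and $\gamma=(d-2)/d$ in (TH).

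\emph{Step 3: parabolic competitor.} Let $u(t)$ denote the spherical parabolic flow with initial datum $c$; by Theorem~\ref{t:flow2}, under the smallness assumptions, $u(t)$ is defined for every $t\ge0$ and converges to a point in $\mathcal{S}_\lambda$. Fix a time horizon $T>0$ to be optimized as a function of $\G(z)-\Theta$. Build the admissible competitor $h$ by taking the $k$-homogeneous extension of the quasi-stationary endpoint $u(T)$ on the inner ball $\{|x|\le e^{-T}\}$, and a radial interpolation back to the boundary datum $c$ on the thin annulus $\{e^{-T}\le|x|\le 1\}$. The energy $\G(h)-\Theta$ then splits into an \emph{inner} contribution of order $e^{-(d+k)T}\bigl(\cF^\lambda(u(T))-\cF^\lambda(\mathcal{S}_\lambda)\bigr)$, controlled by the \L ojasiewicz-driven decay $\cF^\lambda(u(T))-\cF^\lambda(\mathcal{S}_\lambda)\le C\,T^{-1/(1-2\gamma_L)}$ (as in Proposition~\ref{p:decay}), and an \emph{annulus} contribution controlled by $\|c-u(T)\|_{L^2}^2\le T\cdot\bigl(\cF^\lambda(c)-\cF^\lambda(u(T))\bigr)$ via Cauchy--Schwarz and the dissipation identity $-\tfrac{d}{dt}\cF^\lambda(u(t))=\|u'(t)\|^2$. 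Balancing these two bounds against $\G(z)-\Theta$ by an appropriate choice of $T$ yields the claimed logarithmic epiperimetric inequality.

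\emph{Main obstacle.} The central difficulty is the construction of $h$ compatible with the sharp exponent $\gamma$. The naive choice $h(r\theta)=r^k u(-\log r,\theta)$ gives, by a direct calculation,
\begin{equation*}
\G(h)-\Theta=\int_0^{\infty}e^{-(d+k)t}\Bigl[\cF^\lambda(u(t))-\cF^\lambda(\mathcal{S}_\lambda)+\tfrac{1}{2}\|u'(t)\|^2\Bigr]\,dt,
\end{equation*}
and integration by parts combined with the dissipation identity reveals that $\G(h)\ge\G(z)$: the radial oscillation cost $\|u'\|^2$ cancels and then overwhelms the pointwise gain coming from $\cF^\lambda(u(t))\le\cF^\lambda(c)$. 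The correct construction must therefore localize the parabolic improvement to an inner region where $u$ is already nearly stationary, while absorbing the interpolation cost in a thin annulus near $\partial B_1$. Tracking the precise interplay, on this annulus, of the prefactor $e^{-(d+k)T}$, the \L ojasiewicz decay $T^{-1/(1-2\gamma_L)}$ on the inner ball, and the $T$-linear cost of the interpolation, is what produces the dimensional exponents $(d-1)/(d+3)$ and $(d-2)/d$, and is the most delicate technical part of the argument.
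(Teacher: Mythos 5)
Your Steps 1--2 and your diagnosis of the ``naive'' competitor are on the right track: the paper's proof of Theorem \ref{t:log} is exactly a verification of the hypotheses (SL), (FL), (\L S) of the abstract Proposition \ref{p:epiK}, with the slicing identity you write in Step 1 and the constrained \L ojasiewicz inequalities of Propositions \ref{p:ostacolo:loja} and \ref{prop:thin:loja}, and indeed the full reparametrization $h=r^k u(-\log r,\theta)$ loses: by the slicing identity and the dissipation identity of Lemma \ref{l:puppa1}, the radial term enters with coefficient $1$ while the spherical-energy drop only contributes $\sfrac{1}{(2k+d-2)}$ per unit dissipation, so the net is a loss. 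The genuine gap is in Step 3: your replacement construction (run the flow for a \emph{long} time $T$, put the $k$-homogeneous extension of $u(T)$ on $\{|x|\le e^{-T}\}$, interpolate radially on the annulus, and balance $e^{-(d+k)T}T^{-1/(1-2\gamma_L)}$ against $T\,(\cF(c)-\cF(u(T)))$) does not close. First, the annulus contribution is not ``controlled by $\|c-u(T)\|_{L^2}^2$'': the dominant term there is the spherical energy of the interpolant, weighted by $\int_{e^{-T}}^1 r^{2k+d-3}dr$, which is of the same order as $\G(z)-\Theta$ itself (and for a straight-line interpolation the indefinite quadratic form even produces an extra positive term of size $\sim\|c-u(T)\|_{L^2}^2$, since $\cF^\lambda$ is not convex). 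Second, the only available gain is the drop of spherical energy along the transition, which is at most the dissipation $D=\cF(c)-\cF(u(T))$ times a weight $\le\sfrac{1}{(2k+d-2)}$, while your interpolation cost is of size $T\,D$; for large $T$ the cost exceeds the maximal gain, and the inner-ball term, though small, carries the exponentially small weight $e^{-(2k+d-2)T}$ and cannot generate the required gain $\eps\,(\G(z)-\Theta)^{2-2\gamma}$. Nowhere in your balance is there a \emph{lower} bound on the gain in terms of $(\cF(c)-\cF(\psi))^{2-2\gamma}$; the long-time \L ojasiewicz decay $T^{-1/(1-2\gamma_L)}$ is an upper bound on the residual energy and, moreover, its use (and your appeal to Theorem \ref{t:flow2} for global existence and convergence) requires the hypothesis $\cF(u(t))>\cF(\mathcal S_\lambda)$ for all $t$, which is not guaranteed for an arbitrary admissible trace $c$.

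The paper's mechanism is the opposite of ``long time'': in Proposition \ref{p:epiK} the flow is stopped at a \emph{short} time $\alpha\wedge\eps_2$, where $\eps_2$ is the first time the energy gap halves, and is spread slowly over the whole radius via $t=-\alpha\log r$ (constant extension for $t\ge\alpha$, which automatically gives the homogeneous inner part). The slow reparametrization makes the radial cost carry a factor $\alpha^2$ relative to the dissipation term, so for $\alpha$ small (depending only on $d,k,C_{\text{\tiny\sc sl}}$) the dissipation beats the radial cost \emph{pointwise in time}; then the constrained \L ojasiewicz inequality is used not through a decay rate but to bound the dissipation rate from below, $\|u'(t)\|^2=\|\nabla\cF(u(t))\|_{\mathcal K}^2\ge C\big(\cF(u(t))-\cF(\psi)\big)^{2-2\gamma}\ge C\,2^{2\gamma-2}\big(\cF(c)-\cF(\psi)\big)^{2-2\gamma}$ for $t\le\eps_2$, which is precisely what produces the gain $\eps\,|\G(z)-\Theta|^{1-2\gamma}(\G(z)-\Theta)$ (the trivial case $\G(z)\le\G(\phi)$ being handled by $h=z$). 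So to repair your Step 3 you would need to abandon the long-horizon/interpolation balance and extract the gain from a short stopped flow with a slow radial reparametrization, as above; you should also make explicit the exponent dictionary (epiperimetric exponent $=1-2\gamma_L$, e.g.\ $\gamma_L=\sfrac1d$ giving $(d-2)/d$ in (TH)), which in your Step 2 is only asserted.
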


The epiperimetric and the logarithmic epiperimetric inequalities are part of the same family of quantitative estimates on the energy of the homogeneous functions. They are used to obtain regularity of the free boundaries with modulus of continuity, which is H\"older in the first case and logarithmic in the latter. This homogeneity-improvement argument was pioneered by Reifenberg in \cite{Reif2} in the context of minimal surfaces and several authors used it in the context of minimal surfaces and free boundary problems (see
\cite{Wh,taylor1,taylor,CoMi,DSS1,Weiss,SpVe,CoSpVe1,CoSpVe2, esv, esv2}). 

Even if the epiperimetric inequalities, and the methods to deduce regularity from them, might seem quite similar, the methods to prove them are very different. The first epiperimetric inequality for a free boundary (obstacle) problem was proved by Weiss in \cite{Weiss}, where he used an argument by contradiction, which was then applied to different obstacle problems in \cite{FoSp,GaPeVe}; this is a powerful method, which allows to prove the regularity of the flat free boundaries, but it cannot be applied to the singular points, where the integrability fails. In \cite{SpVe}, we used a new, direct approach, inspired by the idea of Reifenberg, which consists in the explicit construction of the competitor starting from the Fourier expansion of the trace; we later used this idea in \cite{CoSpVe1} and \cite{CoSpVe2} to prove the logarithmic epiperimetric inequalities from Theorem \ref{t:log} (OB) and (TH).  

In this paper we give a new, different type of proof, which is constructive (that is, we construct a competitor), but not direct (the competitor is not explicit). We use stopped parabolic flows for a functional on the unit sphere $\partial B_1$ and, identifying the time with the radial direction, we reparametrize it over the spheres $\partial B_r$ to obtain a function defined on the unit ball, which lives in the constraint domain and has smaller energy. This is a general abstract procedure, inspired from \cite{esv} and described in detail in Section \ref{s:epi}.
 
Finally, we recall that  the structure of the singular part of the free boundary of minimizers of the obstacle and thin-obstacle problems was studied by several authors; we refer to \cite{caffarelli_revisited}, \cite{CaRi}, \cite{Monneau}, \cite{FiSe} and \cite{CoSpVe1}, for the obstacle problem, and \cite{GaPe}, \cite{CoSpVe2} for the thin-obstacle problem. We also note that, the uniqueness of the blow-up and the logarithmic modulus of continuity follow directly by the lograithmic epiperimetric inequality (Theorem \ref{t:log}), exactly as in \cite{CoSpVe1,CoSpVe2}.

\section{Asymptotic behavior for parabolic variational inequalities} \label{s:para}

In this section we prove that solutions of parabolic variational inequalities of energies satisfying a \emph{constrained \L ojasiewicz inequality} converge at infinity to a stationary solution of the energy. In order to state the main result we need to introduce some notations.

Let $\mathcal H$ be a Hilbert space with scalar product 
$$u\cdot v= \langle u,v\rangle =\langle u,v\rangle_{\mathcal H}\qquad\text{for every}\qquad u,v\in\mathcal H,$$
and induced norm $\|u\|=\|u\|_\mathcal H=\sqrt{u\cdot u}$. Let $\mathcal W\subset \mathcal H$ be a linear subspace, $\mathcal F:\mathcal W\to\R$ and $\nabla\mathcal F:\mathcal W\to \mathcal H$ be continuous (possibly non-linear) functionals such that 
\begin{equation}\label{e:main:derivataF}
\mathcal F(u+tv)=\mathcal F(u)+t\, v\cdot \nabla \mathcal F(u)+o(t),\quad\text{for every}\quad u,v\in \mathcal W.
\end{equation}
Let $\mathcal K\subset \mathcal H$ be a convex subset of $\mathcal H$. We will suppose that $\mathcal K\cap \mathcal W$ is dense in $\mathcal K$. 

\noindent For every $u\in\mathcal K\cap \mathcal W$, we define
\begin{equation}\label{e:main:Knorm}
\|\nabla \mathcal F(u)\|_{\mathcal K}:=\sup\left\{0\,,\,\sup_{v\in\mathcal K\setminus\{u\}}\frac{-(v-u)\cdot \nabla \mathcal F(u)}{\|v-u\|}\right\}.
\end{equation}
We say that $u\in\mathcal K\cap \mathcal W$ is a \emph{critical point of the functional $\mathcal F$ in $\mathcal K$}, if we have
\begin{equation}\label{e:main:critical}
(v-u)\cdot \nabla \mathcal F(u)\ge 0\quad\text{for every}\quad v\in \mathcal K. 
\end{equation}
The following is a simple exercise left to the reader.
\begin{lm}[Critical points]
Let $u\in\mathcal K\cap \mathcal W$. Then, the following are equivalent: 
\begin{enumerate}[(i)]
\item $u$ is a critical point for $\mathcal F$ in $\mathcal K$; 
\item for every $v\in\mathcal K$, the function $t\mapsto \mathcal F((1-t)u+tv)$ is differentiable in zero and $$\ds\frac{d}{dt}\Big\vert_{t=0^+}\mathcal F((1-t)u+tv)\ge 0\,;$$ 
\item $\|\nabla \mathcal F(u)\|_{\mathcal K}=0$.
\end{enumerate}
\end{lm}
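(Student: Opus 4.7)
The three equivalences are straightforward unravelings of the definitions, combined with the differentiability assumption \eqref{e:main:derivataF} and the density of $\mathcal K\cap\mathcal W$ in $\mathcal K$. I would organize the argument in two short blocks.

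The equivalence (i)$\Leftrightarrow$(iii) is immediate from \eqref{e:main:Knorm}: the quantity $\|\nabla\mathcal F(u)\|_{\mathcal K}$ vanishes if and only if every ratio $-(v-u)\cdot\nabla\mathcal F(u)/\|v-u\|$ with $v\in\mathcal K\setminus\{u\}$ is non-positive, i.e.\ $(v-u)\cdot\nabla\mathcal F(u)\ge 0$ for every such $v$. Since the corresponding inequality is trivial at $v=u$, this is exactly condition \eqref{e:main:critical}.

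For (i)$\Rightarrow$(ii), fix $v\in\mathcal K\cap\mathcal W$ and note that $v-u\in\mathcal W$ because $\mathcal W$ is a linear subspace. Applying \eqref{e:main:derivataF} with direction $v-u$ gives
$$\mathcal F((1-t)u+tv)=\mathcal F(u+t(v-u))=\mathcal F(u)+t(v-u)\cdot\nabla\mathcal F(u)+o(t),$$
so the right derivative at $t=0$ exists and equals $(v-u)\cdot\nabla\mathcal F(u)$, which is $\ge 0$ by (i). (One may then extend to general $v\in\mathcal K$ by density of $\mathcal K\cap\mathcal W$ in $\mathcal K$, since $\mathcal F$ is only defined on $\mathcal W$.) Conversely, for (ii)$\Rightarrow$(i), the same computation applied to $v\in\mathcal K\cap\mathcal W$ shows that the nonnegativity of the right derivative is precisely $(v-u)\cdot\nabla\mathcal F(u)\ge 0$. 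To recover the inequality for an arbitrary $v\in\mathcal K$, pick $v_n\in\mathcal K\cap\mathcal W$ with $v_n\to v$ in $\mathcal H$, obtain $(v_n-u)\cdot\nabla\mathcal F(u)\ge 0$, and pass to the limit using continuity of the scalar product.

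\emph{Main obstacle.} There is no real obstacle; the only mildly subtle point is that \eqref{e:main:derivataF} is stated for elements of the subspace $\mathcal W$, so one first establishes (i)$\Leftrightarrow$(ii) for test functions in $\mathcal K\cap\mathcal W$ and then invokes the density hypothesis to extend to all of $\mathcal K$. This also clarifies how condition (ii) should be interpreted when $v\notin\mathcal W$ (the combination $(1-t)u+tv$ may fall outside the domain of $\mathcal F$), namely as the restriction to $v\in\mathcal K\cap\mathcal W$, which is equivalent to the unrestricted version by density.
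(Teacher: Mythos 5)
Your proposal is correct: (i)$\Leftrightarrow$(iii) is exactly the definition of $\|\cdot\|_{\mathcal K}$, and (i)$\Leftrightarrow$(ii) follows from \eqref{e:main:derivataF} applied to the direction $v-u\in\mathcal W$, with the density of $\mathcal K\cap\mathcal W$ in $\mathcal K$ and continuity of the scalar product handling general $v\in\mathcal K$. The paper explicitly leaves this lemma as an exercise, and your argument (including the remark on how to interpret (ii) when $v\notin\mathcal W$) is the intended direct unwinding of the definitions.
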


The reader may keep in mind the following guiding example: 

\begin{exam}
 Let $\mathcal H=L^2(B_1)$, $\mathcal K=\{u\in L^2(B_1)\ :\ u\ge 0\ \text{on}\  B_1\}$, $\mathcal W=H^2(B_1)\cap H^1_g(B_1)$, where $H^1_g(B_1):=\{u\in H^1_g(B_1)\ :\ u=g\in H^1(B_1)\ \text{on}\  \partial B_1\}$; and let 
$$
\mathcal F(u)=\frac12\int_{B_1}|\nabla u|^2\,dx+\int_{B_1}u\,dx\,.
$$ 
Thus, the critical points of $\mathcal F$ in $\mathcal K$ are precisely the solutions of the obstacle problem in $B_1$ with boundary datum $g$ on $\partial B_1$. 
\end{exam}

\begin{definition}[Parabolic variational inequalities]\label{def:main:existence}
Let $u_0\in\mathcal K$ and $T\in\,]0,\infty]$. 
We say that the function $u:[0,T[\,\to\mathcal K$ is a \emph{(strong) solution} (global, if $T=+\infty$,) of the parabolic variational inequality 
\begin{equation}\label{e:main:flow1K}
\begin{cases}
\big(u'(t)+\nabla \mathcal F(u(t))\big)\cdot (v-u(t))\ge0\quad\text{for every}\quad v\in \mathcal K\,,\ t\in[0,T[\,,\\
u(0)=u_0,
\end{cases} 
\end{equation}
if it satisfies the following conditions:
\begin{enumerate}[(i)]
\item {\it (continuity)} $u\in C([0,T[\,; \mathcal H)$ and $u(0)=u_0$; 
\item {\it (regularity in time)} $u\in H^1_{loc}\big(]0,T[\,;\mathcal H\big)$;  in particular, $u:\,]0,+\infty[\,\to \mathcal H$ is differentiable in almost every $t>0$;
\item {\it (regularity in space)} $u\in C(]0,T[\,;\mathcal W\cap\mathcal K)$;
\item {\it (variational inequality)} for every $v\in\mathcal K$ and almost-every $t>0$ we have 
\begin{equation}\label{e:main:parabolic}
	\left\langle u'(t)+\nabla \mathcal F(u(t)), v-u(t) \right\rangle_{\mathcal H}\ge 0.
	\end{equation}
\end{enumerate}
\end{definition}
\begin{oss}[Stationary solutions]
We notice that if $\varphi\in\mathcal K\cap\mathcal W$ is a critical point of $\mathcal F$ in $\mathcal K$, then $u(t)\equiv\varphi$ is a solution of \eqref{e:main:flow1K}. On the other hand, if $u(t)\equiv\varphi$ is a solution of \eqref{e:main:flow1K}, then the variational inequality \eqref{e:main:parabolic} implies that $\varphi$ is a critical point.
\end{oss}

We will need the following property of strong global solutions of parabolic variational inequalities..

\begin{lm}\label{l:puppa1}
	Let $u$ be a strong solution of \eqref{e:main:flow1K}. The following properties are true.
	\begin{itemize}
		\item[(i)] $\ds \|u'(t)\|^2= - u'(t)\cdot \nabla \mathcal F(u(t))$ for almost-every $t\in\,]0,T[\,$.
		\item[(ii)] The function $t\mapsto \mathcal F(u(t))$ is non-increasing.
		\item[(iii)] $\|u'(t)\|=\|\nabla \mathcal F(u(t))\|_\mathcal K$.
	\end{itemize}
\end{lm}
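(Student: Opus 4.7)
The plan is to extract everything from the variational inequality \eqref{e:main:parabolic} by plugging in admissible competitors of the form $v = u(t+h)$ and $v = u(t-h)$, which are in $\mathcal K$ by property (iii) of Definition \ref{def:main:existence}, and letting $h\to 0^+$.

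For (i), fix a point $t\in\,]0,T[\,$ at which $u$ is differentiable and \eqref{e:main:parabolic} holds (almost every such $t$ works). Choosing $v = u(t+h)$ with $h>0$ in \eqref{e:main:parabolic} gives
\[
\left\langle u'(t)+\nabla\mathcal F(u(t)),\,\tfrac{u(t+h)-u(t)}{h}\right\rangle \ge 0;
\]
letting $h\to 0^+$ yields $\langle u'(t)+\nabla\mathcal F(u(t)),\,u'(t)\rangle\ge 0$. Choosing $v=u(t-h)$ and dividing by $h>0$, then letting $h\to 0^+$ produces the reverse inequality. The two bounds combine to $\|u'(t)\|^2 = -u'(t)\cdot\nabla\mathcal F(u(t))$, which is exactly (i).

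For (ii), the hypotheses on $u$ together with \eqref{e:main:derivataF} imply that $t\mapsto \mathcal F(u(t))$ is absolutely continuous on compact subintervals of $]0,T[\,$, with derivative $u'(t)\cdot\nabla\mathcal F(u(t))$ at almost every differentiability point of $u$. By (i), this derivative equals $-\|u'(t)\|^2\le 0$, which forces $t\mapsto \mathcal F(u(t))$ to be non-increasing on $]0,T[\,$; continuity at $t=0$ (from (i) of Definition \ref{def:main:existence}, possibly combined with lower semicontinuity of $\mathcal F$) extends the monotonicity to $[0,T[\,$.

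For (iii), the variational inequality rewritten as $-(v-u(t))\cdot\nabla\mathcal F(u(t))\le (v-u(t))\cdot u'(t)$ and Cauchy--Schwarz give, for every $v\in\mathcal K\setminus\{u(t)\}$,
\[
\frac{-(v-u(t))\cdot\nabla\mathcal F(u(t))}{\|v-u(t)\|}\le \|u'(t)\|,
\]
so $\|\nabla\mathcal F(u(t))\|_{\mathcal K}\le\|u'(t)\|$. For the reverse bound, if $\|u'(t)\|=0$ the estimate is trivial; otherwise test the supremum in \eqref{e:main:Knorm} against $v=u(t+h)\in\mathcal K$ and pass to the limit $h\to 0^+$, using $\tfrac{u(t+h)-u(t)}{\|u(t+h)-u(t)\|}\to\tfrac{u'(t)}{\|u'(t)\|}$ and (i), to obtain
\[
\|\nabla\mathcal F(u(t))\|_{\mathcal K}\ge \frac{-u'(t)\cdot\nabla\mathcal F(u(t))}{\|u'(t)\|}=\frac{\|u'(t)\|^2}{\|u'(t)\|}=\|u'(t)\|.
\]
The only delicate step is justifying the chain rule in (ii) under the given regularity $u\in H^1_{\text{loc}}(]0,T[\,;\mathcal H)\cap C(]0,T[\,;\mathcal W)$ with $\mathcal F$ only Gâteaux-differentiable in the sense of \eqref{e:main:derivataF}; this is the one place where some care with a standard mollification/approximation argument is required, while (i) and (iii) follow from the competitor choices above.
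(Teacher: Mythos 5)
Your proposal is correct and follows essentially the same route as the paper: testing \eqref{e:main:parabolic} with $v=u(t\pm h)$ and letting $h\to0^+$ for (i), the chain rule via \eqref{e:main:derivataF} for (ii), and Cauchy--Schwarz together with the competitor $v=u(t+h)$ for (iii). The one step you flag as delicate---the chain rule in (ii)---is handled in the paper without mollification, by writing $\mathcal F(u(t+h))-\mathcal F(u(t))=\int_0^1\big(u(t+h)-u(t)\big)\cdot\nabla\mathcal F\big(s\,u(t+h)+(1-s)u(t)\big)\,ds$ and passing to the limit using the continuity of $u$ in $\mathcal W$ and of $\nabla\mathcal F$ on $\mathcal W$ at the points where $u$ is differentiable in $\mathcal H$.
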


\begin{proof} To prove (i) we notice that taking  $t>0$, $h>0$ and $v:=u(t+h)$ in \eqref{e:main:parabolic}, we get 
	\begin{align*}
	0\le \frac1h\big(u(t+h)-u(t)\big)\cdot \big( u'(t)+\nabla \mathcal F(u(t))\big).
	\end{align*}
	Passing to the limit as $h\to 0$, we obtain 
	$$\ds \|u'(t)\|^2\ge -u'(t)\cdot\nabla \mathcal F(u(t)).$$
	Conversely, taking $h>0$ and $v:=u(t-h)$ in \eqref{e:main:parabolic}, we get the opposite inequality. Combining the two estimates, we deduce 
	\begin{equation}\label{e:para:ost:u'}
	\ds \|u'(t)\|^2= - u'(t)\cdot \nabla \mathcal F(u(t))\quad\text{for almost-every}\quad t>0.
	\end{equation}
	(ii) is an immediate consequence of (i).  Indeed, let us first notice that \eqref{e:main:derivataF} implies $$\frac{d}{ds}\mathcal F\big(s\,u(t+h)+(1-s)u(t)\big)=\big(u(t+h)-u(t)\big)\cdot\nabla \mathcal F\big(s u(t+h)+(1-s)u(t)\big).$$
	Thus, 
	$$\frac1h\big(\mathcal F(u(t+h))-\mathcal F(u(t))\big)=\int_0^1\frac1h\big(u(t+h)-u(t)\big)\cdot\nabla \mathcal F\big(s\, u(t+h)+(1-s)u(t)\big)\,ds.$$
	By the continuity of $\nabla \mathcal F$ (in $\mathcal W$), the continuity of $u$ (in $\mathcal W$) and the differentiability of $u$ (in $\mathcal H$), we can pass to the limit , as $h\to0$, in all the points $t$, where $u(t)$ is diffferentiable:
	\begin{equation}\label{e:main:derivataF(u(t))}
	\frac{d}{dt}\mathcal F(u(t))=u'(t)\cdot\nabla \mathcal F(u(t))=-\|u'(t)\|^2\le 0. 
	\end{equation}
	We finally come to (iii). We consider two cases: 
	
	\noindent {\it Case 1. Suppose that $\|u'(t)\|=0$.} Then \eqref{e:main:parabolic} and \eqref{e:main:Knorm} imply that $\|\nabla\mathcal F(u(t))\|_{\mathcal K}=0$.  
	
	\noindent {\it Case 2. Let $\|u'(t)\|>0$.} Then we have
	\begin{align*}
	\sup_{v\in\mathcal K}\frac{-(v-u(t))\cdot \nabla\mathcal F(u(t))}{\|v-u(t)\|}&\ge\lim_{h\to 0^+} \frac{-\frac1h\big(u(t+h)-u(t)\big)\cdot\nabla \mathcal F(u(t))}{\frac1h\|u(t+h)-u(t)\|}\\
	&=\frac{-u'(t)\cdot\nabla \mathcal F(u(t))}{\|u'(t)\|}=\|u'(t)\|.
	\end{align*}
	On the other hand, the parabolic variational inequality \eqref{e:main:parabolic} implies that
	$$\|u'(t)\|\ge \frac{u'(t)\cdot (v-u(t))}{\|v-u(t)\|}\ge \frac{-(v-u(t))\cdot\nabla \mathcal F(u(t))}{\|v-u(t)\|}\quad\text{for every}\quad v\in\mathcal K.$$ 
	Taking the supremum over $v\in\mathcal K$, we finally get 
	$$\|u'(t)\|=\sup_{v\in\mathcal K}\frac{-(v-u(t))\cdot \nabla\mathcal F(u(t))}{\|v-u(t)\|}=\|\nabla \mathcal F(u(t))\|_{\mathcal K},$$
	where the equality follows by the fact that $\|u'(t)\|\ge 0$.
	\end{proof}

\begin{definition}[Continuity with respect to the intial datum]\label{def:main:continuity}
We say that the \emph{flow \eqref{e:main:flow1K} of $-\nabla \mathcal F$ in $\mathcal K$ depends continuously on the initial datum}, if for every $t>0$ and every $\eps>0$ there is $\delta>0$ such that: if $u_0,v_0\in\mathcal K$ are such that $\|u_0-v_0\|\le \delta$, then $\|u(s)-v(s)\|\le \eps$, for every $s\in[0,t]$ (for which both flows are defined). 	
\end{definition}

We notice that the continuity of the flow of $-\nabla \mathcal F$ in $\mathcal K$ essentially boils down to the continuity of the flow of $-\nabla \mathcal F$  without any constraint. Indeed we have the following simple lemma.

\begin{lm}\label{l:cont}
If $\nabla\mathcal F:\mathcal W\to\mathcal H$ is a linear function and there is a constant $\lambda>0$ such that 
	\begin{equation}\label{e:main:growth_condition}
	- u\cdot \nabla \mathcal F(u)\le \lambda \|u\|^2\qquad\text{for every}\qquad u\in \mathcal W,
	\end{equation}
	then the flow \eqref{e:main:flow1K} of $-\nabla \mathcal F$ in $\mathcal K$ depends continuously on the initial datum.
\end{lm}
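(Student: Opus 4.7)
The plan is to compare two strong solutions $u$ and $v$ of \eqref{e:main:flow1K} starting from initial data $u_0,v_0\in\mathcal K$, derive a Gronwall-type estimate on $\|u(t)-v(t)\|^2$, and conclude continuity of the flow in the sense of Definition \ref{def:main:continuity}. The main idea is standard for monotone operator arguments: since $\mathcal K$ is convex, we can plug each solution into the variational inequality satisfied by the other.

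Concretely, first I would test \eqref{e:main:parabolic} for $u$ against $v(t)\in\mathcal K$, and \eqref{e:main:parabolic} for $v$ against $u(t)\in\mathcal K$, obtaining
\begin{equation*}
\langle u'(t)+\nabla\mathcal F(u(t)),\,v(t)-u(t)\rangle\ge 0\quad\text{and}\quad \langle v'(t)+\nabla\mathcal F(v(t)),\,u(t)-v(t)\rangle\ge 0.
\end{equation*}
Summing the two inequalities makes the test functions combine into $u(t)-v(t)$ and yields
\begin{equation*}
-\langle u'(t)-v'(t),\,u(t)-v(t)\rangle\ge \langle \nabla\mathcal F(u(t))-\nabla\mathcal F(v(t)),\,u(t)-v(t)\rangle.
\end{equation*}
Here is where the two structural hypotheses enter: by linearity of $\nabla\mathcal F$ the right-hand side equals $\langle \nabla\mathcal F(u(t)-v(t)),\,u(t)-v(t)\rangle$, and by \eqref{e:main:growth_condition} applied to $u(t)-v(t)\in\mathcal W$ this quantity is bounded below by $-\lambda\|u(t)-v(t)\|^2$. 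Meanwhile the left-hand side is $-\tfrac12\tfrac{d}{dt}\|u(t)-v(t)\|^2$, a fact which, since $t\mapsto u(t)-v(t)$ lies in $H^1_{\text{loc}}$, I would justify exactly as in the proof of Lemma \ref{l:puppa1}(ii) by differentiating the scalar product along the flow.

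Combining these two observations gives the differential inequality
\begin{equation*}
\frac{d}{dt}\|u(t)-v(t)\|^2\le 2\lambda\,\|u(t)-v(t)\|^2\quad\text{for a.e.~}t,
\end{equation*}
and Gronwall's lemma yields $\|u(s)-v(s)\|\le e^{\lambda s}\|u_0-v_0\|$ for every $s$ in the common interval of existence. This estimate is exactly continuous dependence: given $t>0$ and $\varepsilon>0$, the choice $\delta:=e^{-\lambda t}\varepsilon$ ensures $\|u(s)-v(s)\|\le\varepsilon$ for all $s\in[0,t]$ whenever $\|u_0-v_0\|\le\delta$.

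I do not expect any serious obstacle here; the only mild subtlety is the rigorous differentiation of $\|u(t)-v(t)\|^2$ in $t$ under the $H^1_{\text{loc}}$ time-regularity of strong solutions, but this is a routine computation identical in spirit to the identity $\tfrac{d}{dt}\mathcal F(u(t))=u'(t)\cdot\nabla\mathcal F(u(t))$ established in Lemma \ref{l:puppa1}. The linearity assumption on $\nabla\mathcal F$ is essential in order to convert the monotonicity-like one-sided bound \eqref{e:main:growth_condition} into a bound on the difference of gradients; without it one would need a true monotonicity property of $\nabla\mathcal F$.
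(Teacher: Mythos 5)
Your argument is correct and is essentially identical to the paper's proof: both test the variational inequality of each solution against the other, sum, use the linearity of $\nabla\mathcal F$ together with \eqref{e:main:growth_condition} applied to $u(t)-v(t)$, and conclude by the Gronwall-type inequality $\frac{d}{dt}\|u(t)-v(t)\|^2\le 2\lambda\|u(t)-v(t)\|^2$. Your explicit choice $\delta=e^{-\lambda t}\eps$ is the correct reading of the constant the paper records.
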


\begin{proof}
If $u$ and $v$ are two solutions of \eqref{e:main:flow1K} on $[0,T[$, then for every $t\in [0,T[$ we have 
$$
\left\langle u'(t)+\nabla \mathcal F(u(t)), v(t)-u(t)\right\rangle\ge 0\qquad\text{and}\qquad
\left\langle v'(t)+\nabla \mathcal F(v(t)), u(t)-v(t)\right\rangle\ge 0.
$$ 
Thus, by \eqref{e:main:parabolic}, we get
\begin{align*}
\frac12\frac{d}{dt}\|u(t)-v(t)\|^2&=\big\langle u'(t), u(t)-v(t)\big\rangle+\big\langle v'(t), v(t)-u(t)\big\rangle\\
&\le \big\langle \nabla \mathcal F(u(t)), v(t)-u(t)\big\rangle+\big\langle \nabla \mathcal F(v(t)), u(t)-v(t)\big\rangle.
\end{align*}
By the linearity of $\nabla\mathcal F$, we get 
\begin{equation}\label{e:main:pre_gronwall}
\frac12\frac{d}{dt}\|u(t)-v(t)\|^2\le -\big\langle u(t)-v(t),\nabla \mathcal F\big(u(t)-v(t)\big)\big\rangle.
\end{equation}
Now, \eqref{e:main:pre_gronwall} implies that 
$$\frac{d}{dt}\|u(t)-v(t)\|^2\le 2\lambda \|u(t)-v(t)\|^2,$$
so, we can take $\delta= e^{2\lambda t}\eps$ in Definition \ref{def:main:continuity}. 
\end{proof}

In the sequel we will denote by \emph{$\mathcal S\subset \mathcal K\cap\mathcal W$ a subset of the set of critical points of $\cF$ in $\mathcal K$}.

\begin{definition}[Constrained \L ojasiewicz inequality]\label{def:main:loja}
We say that the functional $\mathcal F$ has the \emph{constrained \L ojasiewicz property on $\mathcal S$} if there are constants $\gamma\in\,]0,\sfrac12]$, $C_L>0$, $\delta_L>0$ and $E_L>0$, depending on $\mathcal S$, such that for every critical point $\mathcal \varphi \in \mathcal S$ the following inequality holds 
\begin{equation}\label{e:main:loja}
\big(\mathcal F(u)-\mathcal F(\varphi)\big)_+^{1-\gamma}\le C_L\|\nabla\mathcal F(u)\|_{\mathcal K},
\end{equation}
for every $u\in \mathcal K\cap\mathcal W$ such that $\|u-\varphi\|\le \delta_L$ and $\mathcal F(u)-\mathcal F(\varphi)\le E_L$.
\end{definition}

The following lemma is an easy exercise left to the reader.

\begin{lm}[Stationary points and \L ojasiewicz inequality] If $\mathcal F$ has the constrained \L ojasiewicz property on the subset of critical points $\mathcal S$, then it is locally constant on $\mathcal S$. 
\end{lm}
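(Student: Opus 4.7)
The plan is to give a direct argument by applying the constrained \L ojasiewicz inequality in both directions, exploiting the fact that every $\psi\in\mathcal S$ satisfies $\|\nabla\mathcal F(\psi)\|_{\mathcal K}=0$ by the equivalence stated in the preceding \emph{Critical points} lemma. Concretely, I would fix $\varphi\in\mathcal S$ and let $\gamma, C_L,\delta_L,E_L>0$ be the \emph{uniform} constants furnished by Definition \ref{def:main:loja}. For any $\psi\in\mathcal S$ with $\|\psi-\varphi\|\le\delta_L$ and $|\mathcal F(\psi)-\mathcal F(\varphi)|\le E_L$, the admissibility conditions of \eqref{e:main:loja} are met, so testing the inequality at $\varphi$ with $u=\psi$ yields
\[
\big(\mathcal F(\psi)-\mathcal F(\varphi)\big)_+^{1-\gamma}\le C_L\,\|\nabla\mathcal F(\psi)\|_{\mathcal K}=0,
\]
which forces $\mathcal F(\psi)\le\mathcal F(\varphi)$.

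Swapping the roles of $\varphi$ and $\psi$ is legitimate because the \L ojasiewicz constants are uniform on $\mathcal S$; applying \eqref{e:main:loja} at $\psi$ with $u=\varphi$, together with $\|\nabla\mathcal F(\varphi)\|_{\mathcal K}=0$, would give the opposite inequality $\mathcal F(\varphi)\le\mathcal F(\psi)$. Combining the two shows $\mathcal F(\varphi)=\mathcal F(\psi)$ on the neighborhood
\[
\{\psi\in\mathcal S\,:\,\|\psi-\varphi\|\le\delta_L,\ |\mathcal F(\psi)-\mathcal F(\varphi)|\le E_L\}
\]
of $\varphi$ in $\mathcal S$. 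Under any mild regularity assumption on $\mathcal F$ that makes the energy gap $|\mathcal F(\psi)-\mathcal F(\varphi)|$ small whenever $\psi$ is close to $\varphi$ in $\mathcal H$, this recovers \emph{local constancy} in the usual topological sense.

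\textbf{Main obstacle.} There is essentially none: the proof is a tautological double application of \eqref{e:main:loja}, which the authors highlight by labelling the statement an exercise. The only point worth flagging is the energy threshold $E_L$, which is why ``locally constant'' must be read as constancy on the intersection of the $\delta_L$-ball with an $E_L$-slab of the energy $\mathcal F$ around $\varphi$ in $\mathcal S$; this refinement is invisible in the concrete applications treated later in the paper, where $\mathcal S$ is either a single point or a continuous family of critical points with locally uniform energy, so that the $E_L$ bound is automatic.
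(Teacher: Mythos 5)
The paper offers no written proof (the lemma is explicitly left as an exercise), and your two-sided argument — applying \eqref{e:main:loja} once at $\varphi$ with $u=\psi$ and once at $\psi$ with $u=\varphi$, using that $\|\nabla\mathcal F\|_{\mathcal K}$ vanishes at critical points by the preceding lemma and that the constants in Definition \ref{def:main:loja} are uniform on $\mathcal S$ — is exactly the intended argument and is correct. Your caveat about the $E_L$-threshold (so that ``locally constant'' is, strictly, constancy on the $\delta_L$-ball intersected with the energy slab, unless $\mathcal F\vert_{\mathcal S}$ is $\mathcal H$-continuous) is a fair reading of the abstract statement and is indeed immaterial in the paper's applications, where either $E_L=+\infty$ or $\mathcal F$ is constant on $\mathcal S_\lambda$ by the explicit classification of critical points.
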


Given $\varphi \in \mathcal S$, we will denote by 
$$
\mathcal S_\varphi:=\{\psi \in \mathcal S\,:\,\cF(\psi)=\cF(\varphi) \}
$$  
and write $\mathcal F(\mathcal S_\varphi):=\mathcal F(\varphi)$. If there is only one such energy level, we will drop the index  $\varphi$.
We are now able to state the main result of this section. We will use the notation 
$$\text{dist}(u,\mathcal S_\varphi)=\inf_{\psi\in \mathcal S_\varphi}\|u-\psi\|,$$ 
and by {\it neighborhood of} $\mathcal S_\varphi$ (in $\mathcal K$) we will mean a set (containing a set) of the form $$\{u\in \mathcal K\ :\ \text{dist}(u,\mathcal S_\varphi)<\delta\},$$
for some $\delta>0$.

\begin{prop}\label{p:decay}
Let $\mathcal H$, $\mathcal W$, $\mathcal K$, $\mathcal F:\mathcal W\to\R$ and $\nabla\mathcal F:\mathcal W\to\mathcal H$ be as above. Let $\mathcal S\subset \mathcal W\cap\mathcal K$ be a subset of the set of critical points of $\cF$ in $\mathcal K$. Let $\varphi \in \mathcal S$ and $\mathcal S_\varphi$ be as above.
Suppose that there is a neighborhood of $\mathcal S_\varphi\subset \mathcal W\cap\mathcal K$ , where:

(a) the flow \eqref{e:main:flow1K} of $-\nabla \mathcal F$ in $\mathcal K$ depends continuously on the initial datum (Definition \ref{def:main:continuity}); 

(b) $\cF$ has the constrained \L ojasiewicz property on $\mathcal S_\varphi$ (Definition \ref{def:main:loja}).

\noindent Then, there are constants $\delta>0$, $E>0$ and $C>0$ such that: if $u_0\in\mathcal K\cap \mathcal W$ and $u(t)$ is a global solution (with initial datum $u_0$) satisfying
$$
\text{\rm dist}(u_0,\mathcal S_\varphi)\le\delta\,,\quad\mathcal F(u_0)-\mathcal F(\mathcal S_\varphi)\le E\quad\text{and}\quad \mathcal F(u(t))>\mathcal F(\mathcal S_\varphi),\quad\text{for every}\quad t>0,
$$
then there is a function $u_\infty\in\mathcal \mathcal W\cap \mathcal K$ such that $u(t)$ converges to $u_\infty$ and, if $\gamma<\sfrac12$, then
\begin{equation}\label{e:main:decay_rate}
\|u(t)-u_\infty\|\le Ct^{-\frac{\gamma}{1-2\gamma}}\qquad\text{and}\qquad \mathcal F(u(t))-\mathcal F(\mathcal S_\varphi)\le Ct^{-\frac{1}{1-2\gamma}},
\end{equation}
for every $t\ge 1$, while if $\gamma=\sfrac12$, then the decay is exponential:
\begin{equation}\label{e:main:decay_rate2}
\|u(t)-u_\infty\|\le e^{-Ct}\qquad\text{and}\qquad \mathcal F(u(t))-\mathcal F(\mathcal S_\varphi)\le e^{-Ct}.
\end{equation}
\end{prop}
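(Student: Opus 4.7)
The argument will follow the classical \L ojasiewicz--Simon scheme, with the constrained \L ojasiewicz inequality \eqref{e:main:loja} playing the role of the standard one and Lemma~\ref{l:puppa1} providing the bridge between energy decay and the length of the trajectory. Setting $f(t):=\mathcal{F}(u(t))-\mathcal{F}(\mathcal{S}_\varphi)>0$, parts (i)--(iii) of Lemma~\ref{l:puppa1} yield
$$-f'(t)\;=\;\|u'(t)\|^2\;=\;\|\nabla\mathcal{F}(u(t))\|_{\mathcal{K}}^2\qquad\text{for a.e.}\ t>0,$$
so that whenever $u(t)$ lies in a \L ojasiewicz neighborhood of $\mathcal{S}_\varphi$ (and $f(t)\leq E_L$), applying \eqref{e:main:loja} at a closest $\psi\in\mathcal{S}_\varphi$ (using $\mathcal{F}(\psi)=\mathcal{F}(\mathcal{S}_\varphi)$) gives $\|u'(t)\|\geq C_L^{-1}f(t)^{1-\gamma}$. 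The key consequence is the differential inequality
$$-\frac{d}{dt}\bigl(f(t)^{\gamma}\bigr)\;=\;\gamma\,f(t)^{\gamma-1}\|u'(t)\|^2\;\geq\;\frac{\gamma}{C_L}\,\|u'(t)\|,$$
which upon integration controls the length of the trajectory: $\int_{t_1}^{t_2}\|u'(s)\|\,ds\leq(C_L/\gamma)\bigl(f(t_1)^{\gamma}-f(t_2)^{\gamma}\bigr)$.

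\textbf{The main obstacle} is ensuring that $u(t)$ never leaves the neighborhood where \eqref{e:main:loja} applies, so that the above length bound can be iterated up to $t=+\infty$. I would fix $\rho\in(0,\delta_L]$ and define the exit time
$$T^\ast:=\sup\{t\geq 0\,:\,\dist(u(s),\mathcal{S}_\varphi)<\rho\ \text{for all}\ s\in[0,t]\}.$$
For $t<T^\ast$ the uniformity of $C_L,\delta_L,E_L$ over $\mathcal{S}_\varphi$ (Definition~\ref{def:main:loja}) allows us to invoke \eqref{e:main:loja} throughout the trajectory, and the length bound together with the triangle inequality gives
$$\dist(u(t),\mathcal{S}_\varphi)\;\leq\;\dist(u_0,\mathcal{S}_\varphi)+\int_0^t\|u'(s)\|\,ds\;\leq\;\delta+\frac{C_L}{\gamma}E^{\gamma}.$$
Choosing $\delta,E$ small enough that the right-hand side is strictly smaller than $\rho$, continuity of $t\mapsto u(t)$ in $\mathcal{H}$ (Definition~\ref{def:main:existence}(i)) forces $T^\ast=+\infty$; the continuous-dependence hypothesis~(a) enters here to reduce a general $u_0\in\mathcal{K}$ to the dense subclass $\mathcal{K}\cap\mathcal{W}$ on which the \L ojasiewicz inequality is stated, by approximating $u_0$ and passing to the limit in the estimate.

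Once $T^\ast=+\infty$, the uniform bound $\int_0^\infty\|u'(s)\|\,ds\leq(C_L/\gamma)f(0)^{\gamma}<+\infty$ shows that $u(\cdot)$ is Cauchy in $\mathcal{H}$ and converges to some $u_\infty\in\mathcal{K}$ (by closedness of $\mathcal{K}$). To obtain the quantitative decay I close the differential inequality on $f$ alone:
$$-f'(t)\;=\;\|u'(t)\|^2\;\geq\;C_L^{-2}f(t)^{2(1-\gamma)}.$$
For $\gamma<\tfrac12$, integrating $\frac{d}{dt}\bigl(f^{-(1-2\gamma)}\bigr)\geq(1-2\gamma)C_L^{-2}$ yields $f(t)\leq C\,t^{-1/(1-2\gamma)}$; for $\gamma=\tfrac12$, $-f'\geq C_L^{-2}f$ gives $f(t)\leq f(0)\,e^{-t/C_L^2}$. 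Reapplying the length bound
$$\|u(t)-u_\infty\|\;\leq\;\frac{C_L}{\gamma}\,f(t)^{\gamma}$$
then translates these into \eqref{e:main:decay_rate} and \eqref{e:main:decay_rate2} respectively. The delicate point throughout is the bootstrap in the second paragraph, which is where the geometry of the constraint and the uniformity of the \L ojasiewicz constants across $\mathcal{S}_\varphi$ are essential.
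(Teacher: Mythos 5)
Your proposal is correct, but it follows a genuinely different route from the paper. You run the classical Simon-style \emph{length estimate}: differentiating $f^\gamma$ with $f(t)=\mathcal F(u(t))-\mathcal F(\mathcal S_\varphi)$ and using Lemma \ref{l:puppa1} together with \eqref{e:main:loja} gives $\int_{t_1}^{t_2}\|u'\|\,ds\le \frac{C_L}{\gamma}\big(f(t_1)^\gamma-f(t_2)^\gamma\big)$, and you keep the trajectory inside the \L ojasiewicz neighborhood by an exit-time bootstrap that needs \emph{both} $\delta$ and $E$ small (so that $\delta+\frac{C_L}{\gamma}E^\gamma<\rho\le\delta_L$); convergence, the rate $\|u(t)-u_\infty\|\le\frac{C_L}{\gamma}f(t)^\gamma$, and the decay of $f$ then follow from the scalar ODE $-f'\ge C_L^{-2}f^{2(1-\gamma)}$. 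The paper instead works with $\xi(t)=\int_t^\infty\|u'\|^2\,ds$, derives $-\xi'\ge C\xi^{2(1-\gamma)}$, controls increments via Cauchy--Schwarz on dyadic intervals $[2^k,2^{k+1}]$, and --- crucially --- uses hypothesis (a) (continuity with respect to the initial datum, comparing with the stationary solution $\psi\in\mathcal S_\varphi$) to keep $u(t)$ within $\sfrac{\delta_L}2$ of $\mathcal S_\varphi$ on the initial interval $[0,2^n]$, after which the dyadic estimate takes over; there $E$ can simply be taken equal to $E_L$. Your route buys a cleaner, unified treatment of both $\gamma<\sfrac12$ and $\gamma=\sfrac12$ (the paper needs an extra Cauchy--Schwarz trick for the exponential trajectory rate) and does not need (a) for the confinement step at all. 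Two minor points: your stated use of (a) --- approximating $u_0$ by elements of $\mathcal K\cap\mathcal W$ --- is off-target and unnecessary, since $u_0\in\mathcal K\cap\mathcal W$ by hypothesis and, more to the point, \eqref{e:main:loja} is applied to the time slices $u(t)\in\mathcal W\cap\mathcal K$, $t>0$, which is part of the definition of strong solution; and your argument yields $u_\infty\in\overline{\mathcal K}$ but does not address the assertion $u_\infty\in\mathcal W\cap\mathcal K$ (indeed that $u_\infty$ is a critical point), which is exactly where the paper invokes (a) a second time, by running the flow from $u_\infty$ and using the decay estimate.
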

	
\begin{proof}
	Let $\delta_L>0$ be the constant from Definition \ref{def:main:loja} and set for simplicity $\mathcal S=\mathcal S_\varphi$. We will prove that there is $\delta\in(0,\delta_L)$ with the following property: for any $u_0\in \mathcal K$ such that $\text{dist}(u_0,\mathcal S)<\delta$, the solution $u(t)$ of \eqref{e:main:flow1K} exists for every $t>0$ and $\text{dist}(u(t),\mathcal S)<\delta_L$. The decay rate \eqref{e:main:decay_rate} will be then a consequence of the \L ojasiewicz inequality. We next suppose that $u$ satisfies the inequality $\mathcal F(u(t))> \mathcal F(\mathcal S)$, for every $t>0$. Let us first recall that, by Lemma \ref{l:puppa1} (ii), the map $t\mapsto \mathcal F(u(t))$ is non-increasing. Thus, the condition $\mathcal F(u(t))-\mathcal F(\mathcal S)\le E$ is automatically satisfied for every $t\ge 0$, once it holds for $t=0$.  In particular, we can simply take $E$ to be the constant $E_L$ from the \L ojasiewicz inequality (and if $E_L=+\infty$, then also $E=+\infty$).  
	
	Let $0<t<T<+\infty$ be given. Then, we have 
\begin{equation}\label{e:main:mainest0}
	\int_t^T \|u'(s)\|^2\,ds=-\int_t^T u'(s)\cdot \nabla \mathcal F(u(s))\,ds=\mathcal F(u(t))-\mathcal F(u(T))\le \mathcal F(u(t))-\mathcal F(\mathcal S).
	\end{equation}
	In particular, we obtain that the function $s\mapsto\|u'(s)\|$ is square integrable at infinity and 
	\begin{equation}\label{e:main:mainest}
	\int_t^{+\infty} \|u'(s)\|^2\,ds\le \mathcal F(u(t))-\mathcal F(\mathcal S)\qquad \text{for every}\qquad t>0.
	\end{equation}
	Let now $T\in]0,+\infty]$ be such that $\text{dist}(u(t),\mathcal S)<\delta_L$, for every $t\in]0,T[$. Then, for any $0<t<T$, we can estimate the right-hand side of \eqref{e:main:mainest} by the \L ojasiewicz inequality \eqref{e:main:loja}
\begin{equation}\label{e:main:mainest_loja}
	\int_t^{+\infty} \|u'(s)\|^2\,ds\le \mathcal F(u(t))-\mathcal F(\mathcal S)\le C\|\nabla\mathcal F (u(t))\|_{\mathcal K}^{\frac{1}{1-\gamma}}= C\|u'(t)\|^{\frac{1}{1-\gamma}},
\end{equation}
where in the last equality we used the identity (iii) of Lemma \ref{l:puppa1}. Here and in what follows $C$ will denote {\it any constant} depending only on the constant $C_L$ from the \L ojasiewicz inequality \eqref{e:main:loja} and the exponent $\gamma$. Setting $$\ds \xi(t):=\int_t^{+\infty} \|u'(s)\|^2\,ds,$$
we get that 
	\begin{align}
	-\xi'(t)=\|u'(t)\|^2\ge C\left(\int_t^{+\infty} \|u'(s)\|^2\,ds\right)^{2(1-\gamma)}=C\xi(t)^{2(1-\gamma)}.\label{e:main:xi_est}
	\end{align}
	From now on, we consider the case $\gamma<\sfrac12$. 
	Thus, we get that the function 
	$t\mapsto \left(\xi(t)^{2\gamma-1}-Ct\right)$
	is non-decreasing on $]0,T[$. Thus, for every $0<s<t<T$, we have the estimate
			\begin{equation}\label{e:main:xi_est}
		\xi(t)\le \left(\xi(s)^{-(1-2\gamma)}+C(t-s)\right)^{-\frac{1}{1-2\gamma}}.
			\end{equation}
		Now, let $0<s< t_1< t_2<T$. Then 
		\begin{align*}
		\|u(t_2)-u(t_1)\|&\le \left\|\int_{t_1}^{t_2}u'(\tau)\,d\tau\right\|\le \int_{t_1}^{t_2}\|u'(\tau)\|\,d\tau\\
		&\le \left(\int_{t_1}^{t_2}\|u'(\tau)\|^2\,d\tau\right)^{\sfrac12}(t_2-t_1)^{\sfrac12}\le \xi(t_1)^{\sfrac12}(t_2-t_1)^{\sfrac12}\\
&\le \left(\xi(s)^{-(1-2\gamma)}+C(t_1-s)\right)^{-\frac1{2(1-2\gamma)}}(t_2-t_1)^{\sfrac12}\\
		&\le C (t_1-s)^{-\frac1{2(1-2\gamma)}}(t_2-t_1)^{\sfrac12}.
		\end{align*}
	Taking $k\ge 1$ and applying the above inequality to $t_2=2^{k+1}$, $t_1=2^k$ and $s\le 2^{k-1}$, we get 
		\begin{align*}
		\big\|u(2^{k+1})-u(2^k)\big\|\le 
C\,2^{-\frac{k\gamma}{1-2\gamma}}.
		\end{align*}
		In particular, for every $m> n$ such that $2^m<T$, we obtain 
		\begin{equation}\label{e:main:cauchy_dia}
		\big\|u(2^m)- u(2^n)\big\|\le \sum_{k=n}^\infty \big\|u(2^{k+1})-u(2^k)\big\|= \frac{C}{1-2^{-\frac{\gamma}{1-2\gamma}}}\,2^{-\frac{n\gamma}{1-2\gamma}}.
		\end{equation}
		On the other hand, if $t<T$ and $2^m\le t<2^{m+1}$, then 
		\begin{equation}\label{e:main:cauchy_t}
		\big\|u(t)- u(2^m)\big\|\le C (2^m)^{-\frac1{2(1-2\gamma)}}(t-2^m)^{\sfrac12}\le \sqrt2 C\,(2^m)^{-\frac{\gamma}{1-2\gamma}}.
		\end{equation}
		Thus, for every $n\ge 1$ and $2^n\le t<T$, we obtain 
		\begin{equation}\label{e:main:cauchy}
		\big\|u(t)- u(2^n)\big\|\le C\,2^{-\frac{n\gamma}{1-2\gamma}}.
		\end{equation}
First of all, we choose $n$ such that $\ds C\, 2^{-\frac{n\gamma}{1-2\gamma}}< \frac{\delta_L}2$. Next, we choose $\delta>0$ such that 
$$\text{dist}(u(t),\mathcal S)< \frac{\delta_L}2,\quad\text{for every}\quad t\in]0,2^n[$$(such a constant exists due to the continuity with respect to the initial datum). In particular, this implies that we can take $T=+\infty$. 

We now use again \eqref{e:main:cauchy_dia}, this time for every $m>n$, obtaining that the limit 
$$u_\infty:=\lim_{n\to\infty}u(2^n),$$
exists and is such that $\ds\|u_\infty-u(2^n)\|\le C\,2^{-\frac{n\gamma}{1-2\gamma}}$. Finally, \eqref{e:main:cauchy} implies that 
$$\lim_{t\to\infty}u(t)=u_\infty\qquad\text{and}\qquad \|u_\infty-u(t)\|\le C\,t^{-\frac{\gamma}{1-2\gamma}}.$$ 

In order to obtain the energy decay (the second inequality in \eqref{e:main:decay_rate}) we notice that, by the monotonicity of $t\mapsto \mathcal F(u(t))$, the inequality \eqref{e:main:mainest_loja} and the integrability of $t\mapsto \|u'(t)\|^2$, we get that 
$\ds\lim_{t\to \infty}\mathcal F(u(t))=\mathcal F(\mathcal S).$
Thus, passing to the limit as $T\to\infty$ in \eqref{e:main:mainest0}, we get that \eqref{e:main:mainest} holds with an equality. Now, the decay rate of the energy is a consequence of \eqref{e:main:xi_est}. Finally, let $\psi(t)$ be the solution of \eqref{e:main:flow1K} with initial datum $u_\infty$. The continuity with respect to the initial datum and \eqref{e:main:decay_rate} imply that $\psi(t)$ is stationary and so, $u_\infty$ is a critical point of $\mathcal F$ in $\mathcal K$. 
\smallskip

The case $\gamma=\sfrac12$ is analogous. Moreover, since \eqref{e:main:loja} with $\sfrac12$ implies \eqref{e:main:loja} with any $\gamma<\sfrac12$ (up to changing $C_L$), we can actually use what we already know from $\gamma<\sfrac12$. In particular, \eqref{e:main:loja} can be applied all along the flow and $u(t)$ converges to $u_\infty$. Thus, \eqref{e:main:xi_est} implies that 
$$\mathcal F(u(t))-\mathcal F(\mathcal S_\varphi)=\xi(t)\le \xi(0)e^{-Ct}=\big(\mathcal F(u_0)-\mathcal F(\mathcal S_\varphi)\big)\,e^{-Ct},$$
which gives the second part of \eqref{e:main:decay_rate2}. In order to get the first part, we notice that, choosing $\gamma$ sufficiently close to $\sfrac12$, we get from \eqref{e:main:decay_rate}, that $\ds\int_t^{+\infty} \|u(s)-u_\infty\|^2\,ds\le C$. Thus, 
$$\|u(t)-u_\infty\|^2\le 2\int_t^{+\infty}u'(s)\cdot (u(s)-u_\infty)\,ds\le 2\left(\int_t^{+\infty} \|u(s)-u_\infty\|^2\,ds\right)^{\sfrac12}\xi(t)^{\sfrac12},$$ 
which gives the first part of \eqref{e:main:decay_rate2}. 
\end{proof}	
\begin{oss}\label{oss:main:loja}
We notice that in Proposition \ref{p:decay} it is sufficient to suppose that the \L ojasiewicz inequality \eqref{e:main:loja} holds for the time-slices of the flow $u(t)$, $t\ge 0$.
\end{oss}

To conclude this section we state a simple corollary of Proposition \ref{p:decay}, whose proof is left to the reader.

\begin{coro}\label{c:1} With the same notations of Proposition \ref{p:decay}, let $\varphi\in \mathcal S$ be the unique minimizer of $\mathcal F$ and suppose that
	
	(a) the flow \eqref{e:main:flow1K} of $-\nabla \mathcal F$ in $\mathcal K$ depends continuously on the initial datum; 

(b) $\cF$ has the constrained \L ojasiewicz property on $\mathcal S_\varphi=\{\varphi\}$ with $E_L=\delta_L=+\infty$ and $\gamma=\sfrac12$.

\noindent	Then, any global solution $u:\,]0,+\infty[\,\to \mathcal K$ converges to $\varphi$ and there is a constant $C>0$ such that
	\begin{equation}\label{e:main:decay_rate_1}
	\|u(t)-\varphi\|\le  e^{-Ct}\qquad\text{and}\qquad \mathcal F(u(t))-\mathcal F(\varphi)\le e^{-Ct},
	\end{equation}
	for every $t\ge 1$. 
\end{coro}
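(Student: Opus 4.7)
The plan is to reduce the corollary directly to Proposition \ref{p:decay} with $\gamma=\sfrac12$, exploiting the fact that the hypotheses $\delta_L=E_L=+\infty$ make the constrained \L ojasiewicz inequality \eqref{e:main:loja} a global statement on all of $\mathcal K\cap\mathcal W$. Consequently, the restrictions $\text{dist}(u_0,\mathcal S_\varphi)\le\delta$ and $\mathcal F(u_0)-\mathcal F(\varphi)\le E$ appearing in Proposition \ref{p:decay} can be dispensed with by taking $\delta,E$ arbitrarily large; and the neighborhood of $\mathcal S_\varphi$ used in the proof to legitimate \eqref{e:main:mainest_loja} can be replaced by the entire space.

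The only hypothesis of Proposition \ref{p:decay} that is not immediate is the strict inequality $\mathcal F(u(t))>\mathcal F(\varphi)$ for every $t>0$. Since $\varphi$ is the \emph{unique} minimizer of $\mathcal F$ on $\mathcal K$, any $v\in\mathcal K$ with $v\neq\varphi$ satisfies $\mathcal F(v)>\mathcal F(\varphi)$. Thus either $u(t)\neq\varphi$ for every $t\ge 0$, in which case the strict inequality holds automatically; or $u(t_0)=\varphi$ for some $t_0\ge 0$, in which case the constant map $t\mapsto\varphi$ and the time-shifted flow $t\mapsto u(t+t_0)$ are two solutions of \eqref{e:main:flow1K} with the same initial datum $\varphi$, so the uniqueness of strong solutions -- implied by the continuous dependence assumption (a) applied with $\|u_0-v_0\|=0$ -- forces $u(t)\equiv\varphi$ for all $t\ge t_0$.

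In the first subcase, Proposition \ref{p:decay} applies as stated and yields \eqref{e:main:decay_rate2}, which is precisely \eqref{e:main:decay_rate_1}. In the second subcase one has $\|u(t)-\varphi\|=0$ and $\mathcal F(u(t))-\mathcal F(\varphi)=0$ for every $t\ge t_0$; the interval $[1,t_0]$ (if non-empty) is handled either by running the same \L ojasiewicz-based estimate as in the proof of Proposition \ref{p:decay} on the truncated flow, or more cheaply by bounding $\|u(t)-\varphi\|$ by continuity on the compact set $[1,t_0]$ and absorbing the resulting constant into the exponential by choosing $C$ sufficiently small.

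No genuine obstacle arises: the corollary is a clean specialization of Proposition \ref{p:decay} under the strongest natural \L ojasiewicz hypotheses, and the only bookkeeping points are the verification of the strict energy inequality and the observation that nothing in the proof of Proposition \ref{p:decay} beyond the applicability of \eqref{e:main:loja} requires the flow to remain in a prescribed neighborhood of $\mathcal S_\varphi$.
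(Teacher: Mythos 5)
Your reduction is the intended one -- the paper leaves this corollary to the reader precisely as a specialization of Proposition \ref{p:decay} -- and your treatment of the only non-obvious hypothesis, $\mathcal F(u(t))>\mathcal F(\varphi)$ for all $t>0$, is correct: if $\mathcal F(u(t_0))\le \mathcal F(\varphi)$ for some $t_0$, then $u(t_0)\in\mathcal K\cap\mathcal W$ and the uniqueness of the minimizer force $u(t_0)=\varphi$, and continuous dependence applied with $\|u_0-v_0\|=0$ does give uniqueness of the flow, so $u\equiv\varphi$ on $[t_0,+\infty[$.

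One step, however, is asserted rather than proved. Proposition \ref{p:decay} produces a limit $u_\infty\in\mathcal W\cap\mathcal K$, shown in its proof to be a critical point of $\mathcal F$ in $\mathcal K$, and its conclusion \eqref{e:main:decay_rate2} is an estimate on $\|u(t)-u_\infty\|$, not on $\|u(t)-\varphi\|$; so it is not literally \eqref{e:main:decay_rate_1} until you identify $u_\infty$ with $\varphi$. Since in the abstract framework $\mathcal F$ is not assumed lower semicontinuous along $\mathcal H$-convergent sequences, the clean way to do this is exactly where the hypotheses $\delta_L=E_L=+\infty$ and ``unique minimizer'' enter: apply the (now global) \L ojasiewicz inequality \eqref{e:main:loja} at $u=u_\infty$ and use $\|\nabla\mathcal F(u_\infty)\|_{\mathcal K}=0$ to get $\big(\mathcal F(u_\infty)-\mathcal F(\varphi)\big)_+=0$; since also $\mathcal F(u_\infty)\ge\mathcal F(\varphi)$, the point $u_\infty$ is a minimizer, hence $u_\infty=\varphi$ (equivalently, under (b) the unique minimizer is the only critical point of $\mathcal F$ in $\mathcal K\cap\mathcal W$). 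With this line added your argument is complete; the remaining bookkeeping on $[1,t_0]$ in your second subcase, and the absorption of multiplicative prefactors into the exponent, rely on the same implicit convention on the constant $C$ that the paper itself uses in \eqref{e:main:decay_rate2}, so they are acceptable as stated.
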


\section{Logarithmic epiperimetric inequalities}\label{s:epi}

In this section, we show that if a functional $\G$ satisfies a suitable slicing lemma, and the slicing functional $\cF$ is of the types considered in the previous section, then with a very general computation we can deduce that the so-called logarithmic epiperimetric inequality holds for $\G$. The subtlety here is that the competitor is going to live in a constrained subset of the domain of the functional which is not analytic. The link with the previous sections depends on the fact that we will use a parabolic inequality to define such a competitor. 

In this section we fix $\mathcal H=L^2(\de B_1)$, $\mathcal K\subset \mathcal H$ a convex cone, $\mathcal W=H^2(\de B_1)$.

\begin{prop}\label{p:epiK}
	Let $\G$ be a functional satisfying the following properties.
	
	(SL) There exist a constant $k\in \N$ and a functional $\cF\colon H^1(\de B_1) \to \R $  such that for every $u: [0,1]\times \de B_1\ni (r,\theta)\mapsto u(r,\theta)\in\R$, with $u\in H^1([0,1]\times \de B_1)$, the following slicing inequality holds  
		\begin{equation}\label{e:slicing}
		\G(r^k\,u)\leq  \int_0^1 \cF(u(r,\cdot))\, r^{2k+d-3}\, dr+C_{\text{\tiny\sc sl}}\, \int_{0}^1\int_{\de B_1} |\de_r u|^2\,r^{2k+d-1}\,d\HH^{d-1}dr\,,
		\end{equation} 
		for a geometric constant $C_{\text{\tiny\sc sl}}>0$, with equality if and only if $u$ is $0$-homogeneous (constant in the first variable).

		(FL) There is an open set $\mathcal U_\text{\tiny\sc fl}\subset \mathcal K$ and a constant $\eps_\text{\tiny\sc fl}>0$ such that for every $u_0\in\mathcal U_\text{\tiny\sc fl}$ there exists a strong solution $u\in H^1\big(]0,\eps_\text{\tiny\sc fl}[\,, \mathcal H\big)\cap L^2\big(]0,\eps_\text{\tiny\sc fl}[\,,\mathcal W\cap  \mathcal K\big)$ of
		\begin{equation}\label{e:flow1K}
		\begin{cases}
		\big(u'(t)+\nabla \cF(u(t))\big)\cdot (v-u(t))\ge0\quad\text{for every}\quad v\in \mathcal K \,,\quad 0< t\leq \eps_\text{\tiny\sc fl} \\
		u(0)=u_0\,,
		\end{cases}
		\end{equation}
		and the flow is continuous with respect to the initial datum (see Definition \ref{def:main:continuity}).
		
		(\L S) $\cF$ has the constrained \L ojasiewicz property (Definition \ref{def:main:loja}) with respect to $\mathcal S=\{\psi\}$, where $\psi\in \mathcal W\cap\mathcal K$ is a critical point of $\cF$ in $\mathcal K$ (see \eqref{e:main:critical}).

	Under the conditions (SL), (FL) and (\L S), there are constants $\delta_0>0$ and $E>0$, depending only on the dimension and $\psi$, such that: if $c\in H^1(\partial B_1)\cap\mathcal K$ satisfies  
	$$\|c-\psi\|_{L^2(\partial B_1)}\leq \delta_0\qquad\text{and}\qquad \mathcal F(c)-\mathcal F(\psi)\le E,$$ 
	then there exists a function $h=h(r,\theta)\in H^1(B_1)$ satisfying $h(r,\cdot)\in\mathcal K$, for every $r\in(0,1]$, and 
	\begin{equation}\label{e:logepiK}
	\G(h)-\G(\phi)\leq \left(1-\eps |\G(z)-\G(\phi)|^{1-2\gamma} \right) (\G(z)-\G(\phi))
	\end{equation}
	where $\phi(r,\theta):=r^k\psi(\theta)$, $z(r,\theta):=r^k c(\theta)$, $\eps>0$ is a universal constant and $\gamma>0$ is the exponent from  (\L S). 
\end{prop}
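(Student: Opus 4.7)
My plan is to construct $h$ as the $k$-homogeneous extension of a single time slice of the parabolic flow started at $c$. Let $u(t)$ be the strong solution of \eqref{e:flow1K} with $u(0)=c$ provided by (FL), defined on an interval $[0,T_*]$ with $T_*\in(0,\eps_{\text{\tiny\sc fl}}]$ to be fixed. Set
$$
h(r,\theta):=r^k\,u(T_*,\theta).
$$
Since $\mathcal K$ is a convex cone and $u(T_*)\in\mathcal K$, we have $h(r,\cdot)=r^k\,u(T_*)\in\mathcal K$ for every $r\in(0,1]$. The function $U(r,\theta):=u(T_*,\theta)$ is constant in $r$ (hence $0$-homogeneous), so the slicing inequality (SL) applies with equality, giving $\G(h)=\cF(u(T_*))/\alpha$ with $\alpha:=2k+d-2$. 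Analogously $\G(\phi)=\cF(\psi)/\alpha$ and $\G(z)=\cF(c)/\alpha$.

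Set $f(t):=\cF(u(t))-\cF(\psi)$ and $E_0:=f(0)/\alpha=\G(z)-\G(\phi)$. The case $f(0)\leq 0$ is immediate by taking $h=z$, so assume $f(0)>0$. The desired inequality \eqref{e:logepiK} reduces to
$$
f(T_*)\leq f(0)\bigl(1-\widetilde\eps\,f(0)^{1-2\gamma}\bigr)
$$
for a universal constant $\widetilde\eps>0$. By Lemma \ref{l:puppa1}(i) and (iii), the flow satisfies $f'(t)=-\|u'(t)\|^2=-\|\nabla\cF(u(t))\|_{\mathcal K}^2$. The continuity of the flow with respect to the initial datum (from (FL)), together with the smallness condition $\|c-\psi\|_{L^2}\leq\delta_0$, will ensure that $u(t)$ remains inside the Łojasiewicz neighborhood of $\psi$ for every $t\in[0,T_*]$; then (\L S) yields the differential inequality $-f'(t)\geq C_L^{-2}\,f(t)^{2-2\gamma}$.

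Integrating this ODE and using that $f$ is non-increasing, for $E$ small enough in terms of $T_*,C_L,\gamma$ one checks that $f(t)\geq f(0)/2$ for all $t\in[0,T_*]$; hence
$$
f(0)-f(T_*)=\int_0^{T_*}\bigl(-f'(t)\bigr)\,dt\geq T_*\,C_L^{-2}\,\bigl(f(0)/2\bigr)^{2-2\gamma},
$$
which gives \eqref{e:logepiK} with $\eps=T_*\,C_L^{-2}\,2^{-(2-2\gamma)}\,\alpha^{1-2\gamma}$. The main technical obstacle is the coordinated selection of the thresholds $\delta_0,E$ and the time $T_*\in(0,\eps_{\text{\tiny\sc fl}}]$: we must pick $T_*$ large enough to extract a gain of order $f(0)^{2-2\gamma}$ while keeping $E$ small enough that the solution neither leaves the Łojasiewicz neighborhood of $\psi$ nor drops below $f(0)/2$ throughout $[0,T_*]$, and that the resulting $\eps$ is an absolute positive constant.
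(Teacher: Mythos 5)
There is a genuine gap, and it is structural rather than technical. Your competitor $h(r,\theta)=r^k u(T_*,\theta)$ does not attain the boundary datum: $h(1,\cdot)=u(T_*,\cdot)\neq c$ in general. The whole point of the epiperimetric inequality is to produce a competitor with the \emph{same trace} $c$ on $\partial B_1$ as $z$ (this is what Theorem \ref{t:log} demands through $h\in\mathcal K^c_{\text{\tiny\sc ob}}$, resp.\ $h\in\mathcal K^c_{\text{\tiny\sc th}}$, and it is the only reading under which Proposition \ref{p:epiK} has content: if the trace constraint is dropped, the statement is vacuous, since one may simply take $h=\phi$ when $\G(z)-\G(\phi)>0$ and $h=z$ otherwise). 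This is precisely why the paper does not use a single time slice: it defines $h(r,\theta):=u(-\alpha\log r,\theta)$, reparametrizing the \emph{entire} trajectory over the spheres $\partial B_r$, so that $h=u(0)=c$ at $r=1$; the price is that $h$ is genuinely non-homogeneous, which is why the slicing hypothesis (SL) carries the penalty term $C_{\text{\tiny\sc sl}}\int|\de_r u|^2 r^{2k+d-1}$, absorbed by choosing the time-scale $\alpha$ small (depending on $d,k,C_{\text{\tiny\sc sl}}$) against the dissipation $\|u'(t)\|^2=\|\nabla\cF(u(t))\|_{\mathcal K}^2$ from Lemma \ref{l:puppa1}. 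Your construction avoids the radial term only by discarding the boundary condition, so it cannot be repaired by tuning $\delta_0$, $E$, $T_*$.

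A second, more local error: the claim that for $E$ small one has $f(t)\ge f(0)/2$ on all of $[0,T_*]$ does not follow. The \L ojasiewicz differential inequality $-f'(t)\ge C_L^{-2}f(t)^{2-2\gamma}$ bounds the decay of $f$ from \emph{below}; nothing in the hypotheses bounds $\|u'(t)\|$ from above, so the energy gap may halve arbitrarily fast no matter how small $E$ is. The correct device is the paper's stopping time $\eps_2$, the first time at which $\cF(u(t))-\cF(\psi)=\tfrac12\big(\cF(u_0)-\cF(\psi)\big)$: one runs the construction only up to $\min\{\eps_{\text{\tiny\sc fl}},\eps_2\}$ and argues by dichotomy (if the gap halves first, a constant-factor gain is immediate; otherwise the floor $f\ge f(0)/2$ holds by definition and the \L ojasiewicz gain applies). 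With that fix your energy computation along the flow is fine, but the competitor issue above remains and defeats the proposed proof.
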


\begin{proof}
	Notice that if $\G(z)-\G(\phi)\leq 0$, then choosing $h:=z$ trivially gives the inequality. Therefore we can assume that 
	$$0<\G(z)-\G(\phi)=\frac{1}{2k+d-2}\big(\cF(c)-\cF(\psi)\big).$$
	We construct $h$ in the following way. Let $u:\,]0,\alpha[\,\to \mathcal K$ be a strong solution of
	\begin{equation}\label{e:main:flow1}
	\begin{cases}
	\big(u'(t)+\nabla \mathcal F(u(t))\big)\cdot (v-u(t))\ge0\quad\text{for every}\quad v\in \mathcal K\,,\\
	u(0)=u_0,
	\end{cases} 
	\end{equation}
	where $\alpha\leq\min\{\eps_\text{\tiny\sc fl},\eps_2\}$ and $\eps_2$ is chosen so that
	\begin{equation}\label{e:eps_2}
	\cF(u_0)-\cF(\psi)\leq 2 \big(\cF(u(t))-\cF(\psi)\big)\quad\text{for every} \quad 0< t \leq \eps_2\,,
	\end{equation}
	with equality exactly at $\eps_2$ (notice that $\eps_2$ is well-defined since $t\mapsto \cF(u(t))$ is continuous and non-increasing in $t$). We then extend $u$ to be constant on $[\alpha,+\infty[\,$, that is, $u(t,\cdot)\equiv u(\alpha,\cdot)$ for every $t\geq \alpha$. Then we define the competitor $h$, in polar coordinates,  as
	\begin{equation}
	h(r,\theta):=u\left(-\alpha \log(r),\theta\right)\,.
	\end{equation}
	For the sake of simplicity, we set $\|\cdot\|_2:=\|\cdot\|_{L^2(\partial B_1)}$. By (SL) we have 
	\begin{align}
	\G(h) - &\G(z) 
	\stackrel{\text{\rm(SL)}}{=}   \int_0^1 \big(\cF(h(r,\cdot))-\cF(c)\big)\, r^{2k+d-3}\, dr+C_{\text{\tiny\sc sl}}\, \int_{0}^1\int_{\de B_1} |\de_r h|^2\,r^{2k+d-1}\,d\HH^{d-1}dr\notag\\
	&=\frac1\alpha\int_0^\infty \left(\cF(u(t))-\cF(u^0)\right) \,e^{-\frac{t(2k+d-2)}\alpha} dt+C_{\text{\tiny\sc sl}}\alpha\, \int_{0}^\infty\int_{\de B_1}  | u'|^2  d\HH^{d-1}e^{-\frac{t(2k+d-2)}\alpha}\, dt\notag\\
	&= \frac1\alpha \int_0^\infty \int_0^{\min\{t,\alpha\}} \nabla\cF(u(\tau)) \cdot u'(\tau) \,d\tau \,e^{-\frac{t(2k+d-2)}\alpha} dt
	+ C_{\text{\tiny\sc sl}}\alpha \int_0^{\alpha}  \|u'(t)\|_{2}^2\,e^{-t(2k+d-2)}\,dt\notag\\
	&=\frac1\alpha\int_0^\infty \nabla\cF(u(\tau)) \cdot u'(\tau)\, \int_{\max\{\tau,\alpha\}}^\infty e^{-\frac{t(2k+d-2)}\alpha} \,dt  \,d\tau
	+ C_{\text{\tiny\sc sl}}\alpha \int_0^{\alpha}  \|u'(t)\|_{2}^2\,e^{-\frac{t(2k+d-2)}\alpha}\,dt\notag\\
	&=- \frac1\alpha\int_0^{\alpha} \left(-\frac1{(2k+d-2)}\nabla\cF(u(t)) \cdot u'(t)  -C_{\text{\tiny\sc sl}} \alpha^2  \|u'(t)\|_2^2\right)\,e^{-\frac{t(2k+d-2)}\alpha}\,dt\,\notag\\
	&=- \frac1\alpha\int_0^{\alpha} \left(\frac1{(2k+d-2)}\|u'(t)\|_2^2  -C_{\text{\tiny\sc sl}} \alpha^2 \|u'(t)\|_2^2\right)\,e^{-\frac{t(2k+d-2)}\alpha}\,dt\,\notag\\
	&=- \frac1\alpha\int_0^{\alpha} C_{d,k}\|\nabla\cF(u(t))\|_2^2 \,e^{-\frac{t(2k+d-2)}\alpha}\,dt\,,\label{e:forza_26}
	\end{align}
   where the next to last equality is due to Lemma \ref{l:puppa1} (i) and the last equality is due to Lemma \ref{l:puppa1} (iii) and a choice of $\alpha>0$ small enough, depending only on $d,k$ and $C_{\text{\tiny\sc sl}}$.  
   
   We next fix $\bar\eps\in\,]0,1[$. Using the property (\L S), with $C_L$ being the constant from the constrained \L ojasiewicz inequality, and the previous computation we calculate 
	\begin{align*}
	(\G(h)&-\G(\phi)) -(1-\bar\eps)(\G(z)-\G(\phi))\notag\\
	&=  -\frac{C_{d,k}}\alpha \int_0^{\alpha}\|\nabla\cF(u(t))\|_2^2\, \,e^{-\frac{t(2k+d-2)}\alpha}\,dt+  \frac{\bar\eps}{2k+d-2} \big(\cF(u_0)-\cF(\psi)\big)\notag\\
	&\stackrel{\text{\rm(\L S)}}{\leq}- \frac{C_{d,k}}\alpha \int_0^{\alpha} C_{L}^2 \big(\cF(u(t))-\cF(\psi)\big)^{2-2\gamma} \,e^{-\frac{t(2k+d-2)}\alpha}\,dt+\frac{\bar\eps}{2k+d-2} \big(\cF(u_0)-\cF(\psi)\big)\notag\\
	&\stackrel{\eqref{e:eps_2}}{\leq} -\frac{C_{d,k}C_{L}^2 }\alpha \int_0^{\alpha} 2^{2-2\gamma} \big(\cF(u_0)-\cF(\psi)\big)^{2-2\gamma}\,e^{-\frac{t(2k+d-2)}\alpha}\,dt +\frac{\bar\eps}{2k+d-2} \big(\cF(u_0)-\cF(\psi)\big)\notag\\
	&\leq  -\frac1{2k+d-2} \Big( C_{d,k,\gamma}C_{L}^2 \big(1-e^{-(2k+d-2)}\big)-\eps  \Big)\big(\cF(u_0)-\cF(\psi)\big)^{2-2\gamma}<0
	\end{align*}
	where in the last inequality we chose $\bar\eps:=\eps \big(\cF(u_0)-\cF(\psi)\big)^{1-2\gamma}$ for some $\eps>0$ small enough depending on $d$, $k$, $\gamma$ and $C_{L}$. Notice that we are allowed to apply (\L S) to $u(t)$ for every $0<t\leq \alpha$, by choosing $\delta_0$ small enough (depending on the dimension and $\psi$) and using the continuity of the flow with respect to the initial datum.
\end{proof}

\section{Constrained \L ojasiewicz  inequalities for obstacle and thin-obstacle problems}\label{s:loja}

This section is dedicated to the proofs of the theorems stated in the introduction. In particular we will show that constrained \L ojasiewicz-type inequalities hold in all the problems considered there and then conclude using the abstract results of the previous sections.

\subsection{The obstacle problem in a ball} Let $B_1\subset \R^d$ be the unit ball and $g\in H^1(B_1)$ be a given non-negative function. Let $\mathcal H=L^2(B_1)$, $\mathcal W=H^2(B_1)$ and 
$$\mathcal K_{\text{\tiny\sc ob}}=\big\{u\in H^1(B_1)\ :\ u-g\in H^1_0(B_1),\ u\ge 0\ \text{in}\ B_1\big\}.$$
Recall that the obstacle energy is given by
$$
\mathcal F_{\text{\tiny\sc ob}}(u)=\frac12\int_{B_1}|\nabla u|^2\,dx+\int_{B_1}u\,dx\qquad\text{and}\qquad \nabla \mathcal F_{\text{\tiny\sc ob}}(u)=\Delta u-1.
$$
Let $\varphi\in \mathcal K_{\text{\tiny\sc ob}}$ be the unique solution of the obstacle problem
\begin{equation}\label{e:ost}
\min_{u\in\mathcal K_{\text{\tiny\sc ob}}}\mathcal F_{\text{\tiny\sc ob}}(u).
\end{equation}
Then, $\varphi\in H^2(B_1)$ is the only critical point of $\mathcal F_{\text{\tiny\sc ob}}$ in $\mathcal K_{\text{\tiny\sc ob}}$ (in the sense of \eqref{e:main:critical}) and is also the unique solution of the problem
$$
\Delta\varphi =\ind_{\{\varphi>0\}}\quad\text{in}\quad B_1,\qquad \varphi=g\quad\text{on}\quad \partial B_1.
$$

\begin{prop}[Constrained \L ojasiewicz for the obstacle problem]\label{p:ost:loja}
	Let $\mathcal K_{\text{\tiny\sc ob}}$ and $\mathcal F_{\text{\tiny\sc ob}}$ be as above and $\varphi$ be the solution of the obstacle problem \eqref{e:ost}. Then, there is a dimensional constant $C_d>0$ such that 
	\begin{equation}\label{e:ost:loja}
	\big(\mathcal F_{\text{\tiny\sc ob}}(u)-\mathcal F_{\text{\tiny\sc ob}}(\varphi)\big)^{\sfrac12} \leq C_d\, \|\nabla \mathcal F_{\text{\tiny\sc ob}}(u)\|_{\mathcal K_{\text{\tiny\sc ob}}}\quad\text{for every}\quad u\in H^2(B_1)\cap \mathcal K_{\text{\tiny\sc ob}}.
	\end{equation}
	\end{prop}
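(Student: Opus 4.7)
\medskip
\noindent\textbf{Proof plan.} Because the obstacle problem on a ball admits a \emph{unique} minimizer $\varphi$, the unique candidate to plug into the definition \eqref{e:main:Knorm} of $\|\nabla\mathcal F_{\text{\tiny\sc ob}}(u)\|_{\mathcal K_{\text{\tiny\sc ob}}}$ is $v=\varphi$ itself; my plan is to exploit precisely this choice after first writing the energy gap in a form that matches the quantity one gets from it. Concretely, I would first establish the exact identity
$$
\mathcal F_{\text{\tiny\sc ob}}(u)-\mathcal F_{\text{\tiny\sc ob}}(\varphi)=\tfrac12\int_{B_1}|\nabla(u-\varphi)|^2\,dx+\int_{\{\varphi=0\}}u\,dx,
$$
obtained by setting $w:=u-\varphi\in H^1_0(B_1)$, expanding $|\nabla u|^2=|\nabla\varphi|^2+2\nabla\varphi\cdot\nabla w+|\nabla w|^2$, integrating by parts the cross term (the boundary contribution vanishes because $w\in H^1_0$), and using the well-known complementarity relation $\Delta\varphi=\mathbf{1}_{\{\varphi>0\}}$ a.e., so that $(1-\Delta\varphi)=\mathbf{1}_{\{\varphi=0\}}$. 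Both summands on the right are manifestly non-negative, which is reassuring.

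\medskip

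Next I would test the $\mathcal K_{\text{\tiny\sc ob}}$-norm against $v=\varphi\in\mathcal K_{\text{\tiny\sc ob}}$. Writing $N:=\|\nabla\mathcal F_{\text{\tiny\sc ob}}(u)\|_{\mathcal K_{\text{\tiny\sc ob}}}$ and unravelling the definition \eqref{e:main:Knorm} one obtains
$$
(u-\varphi)\cdot\nabla\mathcal F_{\text{\tiny\sc ob}}(u)=\int_{B_1}(u-\varphi)(-\Delta u+1)\,dx\leq N\,\|u-\varphi\|_{L^2(B_1)}.
$$
Integrating by parts on the left (again $u-\varphi\in H^1_0$), writing $\nabla u=\nabla\varphi+\nabla w$, and using $\Delta\varphi=\mathbf{1}_{\{\varphi>0\}}$ exactly as in the first step, the left-hand side collapses to
$$
\int_{B_1}|\nabla w|^2\,dx+\int_{\{\varphi=0\}}u\,dx.
$$
This is the same non-negative combination appearing in the energy gap (up to the factor $\tfrac12$ in front of $\int|\nabla w|^2$), so
$$
\mathcal F_{\text{\tiny\sc ob}}(u)-\mathcal F_{\text{\tiny\sc ob}}(\varphi)\leq\int_{B_1}|\nabla w|^2\,dx+\int_{\{\varphi=0\}}u\,dx\leq N\,\|u-\varphi\|_{L^2(B_1)}.
$$

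\medskip

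Finally I would close the estimate by Poincar\'e: since $w=u-\varphi\in H^1_0(B_1)$,
$$
\|u-\varphi\|_{L^2(B_1)}\leq C_d\,\|\nabla w\|_{L^2(B_1)}\leq C_d\,\bigl(2(\mathcal F_{\text{\tiny\sc ob}}(u)-\mathcal F_{\text{\tiny\sc ob}}(\varphi))\bigr)^{\sfrac12}.
$$
Substituting into the previous inequality and dividing by $(\mathcal F_{\text{\tiny\sc ob}}(u)-\mathcal F_{\text{\tiny\sc ob}}(\varphi))^{\sfrac12}$ yields \eqref{e:ost:loja} with the sharp exponent $\gamma=\sfrac12$, and the constant depends only on the dimension through the Poincar\'e constant of $B_1$.

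\medskip

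The only step that requires any thought is the first: one must notice that after integration by parts both the energy gap and the quantity coming from testing the $\mathcal K$-norm against $\varphi$ reduce to the \emph{same} non-negative object $\int|\nabla w|^2+\int_{\{\varphi=0\}}u$. Once this matching is in place, the rest is a one-line Poincar\'e argument, and no information about the free boundary beyond the pointwise identity $\Delta\varphi=\mathbf{1}_{\{\varphi>0\}}$ is used. In particular, the global inequality here holds with no smallness assumption, which is why Theorem~\ref{t:flow1} can be applied (through Corollary~\ref{c:1}) to give the exponential decay \eqref{e:flow1:convergence}.
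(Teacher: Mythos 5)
Your proof is correct and follows essentially the same route as the paper: you test the $\mathcal K$-norm with the single competitor $v=\varphi$, use $\Delta\varphi=\ind_{\{\varphi>0\}}$ and integration by parts (with $u-\varphi\in H^1_0(B_1)$) to identify the resulting quantity with the energy gap up to the nonnegative terms $\int|\nabla(u-\varphi)|^2$ and $\int_{\{\varphi=0\}}u$, and then close with the Poincar\'e inequality. The only cosmetic difference is that you state the exact energy-gap identity first and reuse it, while the paper carries the two nonnegative terms through the chain of inequalities directly.
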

	
\begin{proof}To simplify the notations we drop the index ${\text{\sc ob}}$.
Let $u\in H^2(B_1)\cap \mathcal K$. Then, we have 
\begin{align*}
\left\|\Delta u-1\right\|_{\mathcal K}
&\geq- \frac{1}{\|u-\varphi\|_{L^2}}\int_{B_1} (u-\varphi)\left(\Delta u-1\right)\,dx\\
&=- \frac{1}{\|u-\varphi\|_{L^2}}\int_{B_1} (u-\varphi)\Delta (u-\varphi) \,dx+\frac{1}{\|u-\varphi\|_{L^2}} \int_{B_1\cap\{\varphi=0\}} (u-\varphi)\,dx\\
&\ge \frac{1}{\|u-\varphi\|_{L^2}}\left(\frac12\int_{B_1}  |\nabla (u- \varphi)|^2\,dx+ \int_{B_1\cap\{\varphi=0\}} (u-\varphi)\,dx\right)\,.
\end{align*}
Next since
\begin{align}
\int_{B_1} |\nabla(u-\varphi)|^2\,dx&=\int_{B_1}\Big(|\nabla u|^2-|\nabla \varphi|^2+2 \nabla \varphi \cdot\nabla (\varphi-u)\Big)\,dx\notag\\
& =\int_{B_1}|\nabla u|^2\,dx-\int_{B_1}|\nabla \varphi|^2\,dx+2\int_{B_1\cap \{\varphi>0\}}(u-\varphi)\,dx,\label{e:ost:grad}
\end{align}
we conclude that 
\begin{align}\label{e:???}
\left\|\Delta u-1\right\|_{L^2}\ge \frac{1}{\|u-\varphi\|_{L^2}}\big(\mathcal F(u)-\mathcal F(\varphi)\big).
\end{align}
Let $C_d$ be the constant of the Poincar\'e inequality for $u-\varphi\in H^1_0(B_1)$. Using again \eqref{e:ost:grad} and the fact that $u\ge 0$ in $B_1$, we have 
$$
\|u-\varphi\|_{L^2}^2\leq C_d \|\nabla(u-\varphi)\|_{L^2}^2\leq 2C_d \big(\mathcal F(u)-\mathcal F(\varphi)\big)\,.
$$
This, together with \eqref{e:???} gives \eqref{e:ost:loja}.
\end{proof}

\subsection{The thin-obstacle problem in a ball}
Let $B_1$ be the unit ball in $\R^d$, $B_1^+:=B_1\cap \{x_d>0\}$ and $B_1'=B_1\cap \{x_d=0\}$; we will use the notation $x=(x',x_d)\in \R^{d-1}\times\R$. Let $g\in H^1(B_1^+)$ be such that $g\ge 0$ on $B_1'$. Let $\mathcal H=L^2=L^2(B_1^+)$, $\mathcal W=H^2(B_1^+)$ and  
$$
\mathcal K_{\text{\tiny\sc th}}:=\Big\{u\in H^1(B_1^+)\ :\ u\ge 0\ \text{ on }\ B_1',\ u=g\ \text{ on }\ \partial B_1\cap\{x_d>0\}\Big\}.
$$
Recall that the thin-obstacle energy is given by
$$
\mathcal F_{\text{\tiny\sc th}}(u)=\frac12\int_{B_1}|\nabla u|^2\,dx\qquad\text{with}\qquad \nabla \mathcal F_{\text{\tiny\sc th}}(u)=\Delta u.
$$
Let $\varphi$ be the unique solution of the thin-obstacle problem 
\begin{equation}\label{e:sottile}
\min_{u\in\mathcal K_{\text{\tiny\sc th}}}\mathcal F_{\text{\tiny\sc th}}(u)\,,
\end{equation}
where all functions in $\mathcal K_{\text{\tiny\sc th}}$ are extended even to the full ball.

\begin{prop}[Constrained \L ojasiewicz for the thin-obstacle problem]\label{p:sottile}
	Let $\mathcal K_{\text{\tiny\sc th}}$, $\mathcal F_{\text{\tiny\sc th}}$ and $\varphi$ be as above. 
There is a dimensional constant $C_d$ such that 
	\begin{equation}\label{e:sottile:loja}
\big(\mathcal F_{\text{\tiny\sc th}}(u)-\mathcal F_{\text{\tiny\sc th}}(\varphi)\big)^{\sfrac12} \leq C_d \|\nabla \mathcal F_{\text{\tiny\sc th}}(u)\|_{\mathcal K},
\end{equation}
for every $u\in H^2(B_1^+)\cap \mathcal K_{\text{\tiny\sc th}}$ such that
	\begin{equation}\label{e:sottile:conditions}
	\frac{\partial u}{\partial x_d}\le 0\quad \text{on}\quad B_1'\qquad \text{and}\qquad u\frac{\partial u}{\partial x_d}= 0\quad \text{on}\quad B_1'.
	\end{equation}
\end{prop}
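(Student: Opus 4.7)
The plan is to mimic the proof of the obstacle case (Proposition~\ref{p:ost:loja}): test the supremum defining $\|\nabla \cF_{\text{\tiny\sc th}}(u)\|_{\mathcal K_{\text{\tiny\sc th}}}$ with $v=\varphi$, show that $-(\varphi-u)\cdot\nabla \cF_{\text{\tiny\sc th}}(u)\ge \cF_{\text{\tiny\sc th}}(u)-\cF_{\text{\tiny\sc th}}(\varphi)$, and conclude via a Poincar\'e-type bound on $\|u-\varphi\|_{L^2}$. The new feature is that the ``contact set'' of the thin-obstacle sits on the codimension-one hyperplane $B_1'$: integration by parts now produces genuine surface integrals on $B_1'$, and the hypotheses~\eqref{e:sottile:conditions} on $u$, together with the parallel conditions satisfied by the minimizer $\varphi$ (namely $\Delta\varphi=0$ in $B_1^+$, together with $\varphi\ge 0$, $\partial_{x_d}\varphi\le 0$ and $\varphi\,\partial_{x_d}\varphi=0$ on $B_1'$), are what will make those surface terms come with the right sign.

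In detail, taking $v=\varphi$ gives $\|\nabla \cF_{\text{\tiny\sc th}}(u)\|_{\mathcal K_{\text{\tiny\sc th}}}\ge \|u-\varphi\|_{L^2}^{-1}\int_{B_1^+}(\varphi-u)\Delta u\,dx$. Using $\Delta\varphi=0$ in $B_1^+$, I will rewrite the integrand as $-\int_{B_1^+}(u-\varphi)\Delta(u-\varphi)\,dx$ and integrate by parts: the Dirichlet condition on $\partial B_1\cap\{x_d>0\}$ kills the upper-hemisphere boundary, while on $B_1'$ the outward normal is $-e_d$ and one picks up a contribution $\int_{B_1'}(u-\varphi)\partial_{x_d}(u-\varphi)\,d\HH^{d-1}$. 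Expanding this surface integrand and applying the complementarities $u\,\partial_{x_d}u=\varphi\,\partial_{x_d}\varphi=0$ on $B_1'$, the diagonal pieces vanish and only $-\int_{B_1'}\varphi\,\partial_{x_d}u-\int_{B_1'}u\,\partial_{x_d}\varphi$ survives, both nonnegative by the sign hypotheses. A parallel integration by parts on $\int\nabla\varphi\cdot\nabla(u-\varphi)$, using $\Delta\varphi=0$ and $\varphi\,\partial_{x_d}\varphi=0$, yields the energy identity $2(\cF_{\text{\tiny\sc th}}(u)-\cF_{\text{\tiny\sc th}}(\varphi))=\int_{B_1^+}|\nabla(u-\varphi)|^2-2\int_{B_1'}u\,\partial_{x_d}\varphi$. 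Comparing the two computations, the common term $\int_{B_1'}u\,\partial_{x_d}\varphi$ cancels and leaves
\begin{equation*}
\int_{B_1^+}(\varphi-u)\Delta u\,dx=\bigl(\cF_{\text{\tiny\sc th}}(u)-\cF_{\text{\tiny\sc th}}(\varphi)\bigr)+\tfrac12\int_{B_1^+}|\nabla(u-\varphi)|^2-\int_{B_1'}\varphi\,\partial_{x_d}u\ge \cF_{\text{\tiny\sc th}}(u)-\cF_{\text{\tiny\sc th}}(\varphi),
\end{equation*}
since the last two terms are each nonnegative.

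For the final step, the evenness of $u$ and $\varphi$ across $\{x_d=0\}$ makes $u-\varphi$ extend by even reflection to an $H^1_0(B_1)$ function, so the standard Poincar\'e inequality on $B_1$ gives $\|u-\varphi\|_{L^2(B_1^+)}^2\le C_d\,\|\nabla(u-\varphi)\|_{L^2(B_1^+)}^2$. The energy identity above, together with $-\int_{B_1'}u\,\partial_{x_d}\varphi\ge 0$, implies $\|\nabla(u-\varphi)\|_{L^2}^2\le 2(\cF_{\text{\tiny\sc th}}(u)-\cF_{\text{\tiny\sc th}}(\varphi))$, and chaining everything produces the desired $(\cF_{\text{\tiny\sc th}}(u)-\cF_{\text{\tiny\sc th}}(\varphi))^{1/2}\le\sqrt{2C_d}\,\|\nabla\cF_{\text{\tiny\sc th}}(u)\|_{\mathcal K_{\text{\tiny\sc th}}}$. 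The main delicate point will be the boundary accounting: whereas in the obstacle case the only leftover term was the volume integral $\int_{\{\varphi=0\}}u\ge 0$, here we must recognize \emph{two} distinct surface integrals on $B_1'$ as nonnegative, which forces us to exploit all four optimality inequalities satisfied jointly by $u$ and $\varphi$.
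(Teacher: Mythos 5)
Your proposal is correct and follows essentially the same route as the paper's proof: testing the supremum defining $\|\nabla\cF_{\text{\tiny\sc th}}(u)\|_{\mathcal K}$ with $v=\varphi$, integrating by parts using $\Delta\varphi=0$ in $B_1^+$, recognizing the two surface terms $-\int_{B_1'}\varphi\,\partial_{x_d}u$ and $-\int_{B_1'}u\,\partial_{x_d}\varphi$ as nonnegative via the joint sign/complementarity conditions, and closing with the energy identity plus Poincar\'e for $u-\varphi$. The only cosmetic differences are that you justify Poincar\'e by even reflection to $H^1_0(B_1)$ (the paper invokes it directly on $B_1^+$ using the vanishing trace on $\partial B_1\cap\{x_d>0\}$) and that you write the boundary terms with $\partial_{x_d}$ rather than the outer normal derivative $\partial_n=-\partial_{x_d}$.
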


\begin{proof} For simplicity of notations we drop the index ${\text{\sc th}}$.
Let $u\in H^2(B_1^+)\cap \mathcal K$ be a function satisfying \eqref{e:sottile:conditions}. 
By definition of $\|\cdot \|_\mathcal K$ (see \eqref{e:main:Knorm}), we have the estimate  
\begin{align*}
\|\nabla \mathcal F(u)\|_{\mathcal K}
&\geq- \frac1{\|u-\varphi\|_{L^2}}\int_{B_1^+} (u-\varphi)\Delta u\,dx=- \frac1{\|u-\varphi\|_{L^2}}\int_{B_1^+} (u-\varphi)\Delta (u-\varphi)\,dx\\
&=\frac{1}{\|u-\varphi\|_{L^2}}\left(\int_{B_1^+}  |\nabla (u-\varphi)|^2\,dx-\int_{B_1'}  (u-\varphi)\frac{\partial (u-\varphi)}{\partial n}dx'\right),
\end{align*}
where $n$ is the exterior normal to $B_1^+$. 
On the other hand, we have
\begin{align}
\frac12\int_{B_1^+} |\nabla(u-\varphi)|^2\,dx&=\frac12\int_{B_1^+}|\nabla u|^2\,dx-\frac12\int_{B_1^+}|\nabla \varphi|^2\,dx+\int_{B_1^+}\nabla \varphi\cdot \nabla(\varphi-u)\,dx\notag\\
&=\mathcal F(u)-\mathcal F(\varphi)+\int_{B_1'} (\varphi-u)\frac{\partial \varphi}{\partial n}\,dx'.\label{e:sottile:grad}
\end{align}
Using that $\varphi\frac{\partial u}{\partial n}\ge 0$, $u\frac{\partial \varphi}{\partial n}\ge 0$ and  $u\frac{\partial u}{\partial n}=\varphi\frac{\partial \varphi}{\partial n}= 0$ on $B_1'$, we have
\begin{align*}
\|\nabla \mathcal F(u)\|_{\mathcal K}
&\ge\frac{1}{\|u-\varphi\|_{L^2}}\left(\frac12\int_{B_1^+}  |\nabla (u-\varphi)|^2\,dx+\int_{B_1'}  \left(\varphi\frac{\partial u}{\partial n}+u\frac{\partial \varphi}{\partial n}\right)dx'\right)\\
&=\frac{1}{\|u-\varphi\|_{L^2}}\left(\mathcal F(u)-\mathcal F(\varphi)+\int_{B_1'} \varphi\frac{\partial u}{\partial n}\,dx'\right)\ge\frac{\mathcal F(u)-\mathcal F(\varphi)}{\|u-\varphi\|_{L^2}}.
\end{align*}
On the other hand, the fact that $u-\varphi=0$ on $\partial B_1\cap\{x_d>0\}$, the Poincaré inequality and \eqref{e:sottile:grad} give that 
\begin{align*}
\|u-\varphi\|_{L^2(B_1^+)}^2\le C_d\int_{B_1^+} |\nabla(u-\varphi)|^2\,dx\le 2C_d\big(\mathcal F(u)-\mathcal F(\varphi)\big),
\end{align*}
which concludes the proof of \eqref{e:sottile:loja}.
\end{proof}

\subsection{Obstacle problem on a compact manifold}
Let $(\mathcal M,g)$ be a compact connected oriented Riemannian manifold of dimension $d\ge 2$. We denote by $\Delta$ and $\nabla$ the Laplace-Beltrami operator and the gradient on $\mathcal M$, respectively. We denote by $dV_g$ the volume form on $\mathcal M$, in local coordinates $dV_g = \det (g_{ij}) dx^1 \wedge\dots\wedge dx^n$. We will denote by $L^2(\mathcal M)$ the space of Lebesgue measurable square real integrable functions and, for $u\in L^2(\mathcal M)$, we will use the notation $\|u\|_2=\left(\int_{\mathcal M} u^2\,dV_g\right)^{\sfrac12}$. The associated scalar product in $L^2(\mathcal M)$ will be denoted by
$$u\cdot v=\langle u,v\rangle=\langle u,v\rangle_{L^2(\mathcal M)}=\int_{\mathcal M}uv\,dV_g\,,\quad\text{for}\quad u,v\in L^2(\mathcal M).$$ 
The Sobolev space $H^1(\mathcal M)$ on $(\mathcal M,g)$ is defined as the closure of the smooth functions on $\mathcal M$ with respect to the norm 
$$\|u\|_{H^1}=\int_{\mathcal M}\big(g(\nabla u,\nabla u)+u^2\big)\,dV_g.$$ 
Moreover, we will often use the notations $\nabla u\cdot\nabla v:=g(\nabla u, \nabla v)$, for the scalar product with respect to the metric $g$, and $|\nabla u|^2:=g(\nabla u, \nabla u)$, for the induced norm. The higher order Sobolev spaces $H^k(\mathcal M)$ are defined analogously. The Laplace-Beltrami operator $\Delta$ is defined on $H^2(\mathcal M)$ with values in $L^2(\mathcal M)$ and also by duality, as an operator $\Delta: H^1(\mathcal M)\to H^{-1}(\mathcal M)$; in both cases we will use the notation  
$$\int_{\mathcal M} (\Delta u)\, v\, dV_g:=-\int_{\mathcal M} \nabla u\cdot \nabla v\,dV_g\quad\text{for every}\quad u,v\in H^1(\mathcal M).$$
It is well known that the spectrum of the Laplace-Beltrami operator $\Delta$ is discrete and can be written as an increasing sequence of real positive eigenvalues $0=\lambda_1<\lambda_2\le \lambda_3\le \dots\le \lambda_k\le \dots$ counted with their multiplicity. The corresponding eigenfunctions $\phi_j\in H^1({\mathcal M})$, $j\in \N$, are smooth on ${\mathcal M}$ and form an infinite orthonormal basis of $L^2({\mathcal M})$, precisely, 
$$\ds \int_{\mathcal M} uv\, dV_g=\delta_{ij}\qquad\text{and}\qquad \int_{\mathcal M} \nabla u\cdot\nabla v\, dV_g=\lambda_i\delta_{ij}.$$
Let $\lambda\in\R$ be given and $\F_{\text{\tiny\sc ob}}^\lambda$ be the functional 
\begin{equation}
\label{e:ostacolo:F}
\F_{\text{\tiny\sc ob}}^\lambda(u) = \frac12\int_{{\mathcal M}}\left(|\nabla u|^2-\lambda u^2\right) dV_g+\int_{{\mathcal M}}u\, dV_g. 
\end{equation}
For $u,v\in H^1({\mathcal M})$, we will use the notation  
$$v\cdot \nabla \F_{\text{\tiny\sc ob}}^\lambda(u)=\nabla \F_{\text{\tiny\sc ob}}^\lambda(u)[v]=\delta \F_{\text{\tiny\sc ob}}^\lambda(u)[v]=\lim_{t\to 0}\frac1t\left(\mathcal F_{\text{\tiny\sc ob}}^\lambda(u+tv)-\mathcal F_{\text{\tiny\sc ob}}^\lambda(u)\right),$$
and we notice that 
$$\nabla \mathcal F_{\text{\tiny\sc ob}}^\lambda(u)=-\Delta u - \lambda u + 1\quad\text{for every}\quad u\in H^2(\mathcal M)$$
Let $\mathcal K_{\text{\tiny\sc ob}}^{\mathcal M}:=\{u\in L^2({\mathcal M})\,:\, u\geq 0\}$. For $u\in\mathcal K_{\text{\tiny\sc ob}}^{\mathcal M}\cap H^2(\mathcal M)$, we define  $\|\nabla \mathcal F_{\text{\tiny\sc ob}}^\lambda(u)\|_{\mathcal K_{\text{\tiny\sc ob}}^{\mathcal M}}$ as in \eqref{e:main:Knorm}. 
\medskip

Let $\lambda$ be an eigenvalue of the Laplace-Beltrami operator $\Delta$ on ${\mathcal M}$. Then all the critical points of $\mathcal F_{\text{\tiny\sc ob}}^\lambda$ are 
of the form $\lambda^{-1}+\phi$, for some $\lambda$-eigenfunction $\phi$. We denote by $\mathcal S_\lambda$ the set of all non-negative critical points of the functional $\mathcal F_{\text{\tiny\sc ob}}^\lambda$. Notice that $\mathcal F_{\text{\tiny\sc ob}}^\lambda$ is constant on $\mathcal S_\lambda$, that is, 
$$
\mathcal F_{\text{\tiny\sc ob}}^\lambda(\mathcal S_\lambda)=\mathcal F_{\text{\tiny\sc ob}}^\lambda(\varphi)\,,\qquad\forall\varphi\in \mathcal S_\lambda.
$$

\begin{oss}
The set $\mathcal S_\lambda$ is bounded both in $L^2(\mathcal M)$ and $H^1(\mathcal M)$. Indeed, suppose that this is not the case. Then there is a sequence of $\lambda$-eigenfunctions $\phi_n$ such that $\|\phi_n\|_2=1$ and a sequence $C_n\to\infty$ such that $\lambda^{-1}+C_n\phi_n\in\mathcal S_\lambda$. But then we have also that $\psi_n:=C_n^{-1}\lambda^{-1}+\phi_n\in\mathcal S_\lambda$. Now, since $\phi_n$ is bounded in $H^1(\mathcal M)$, up to a subsequence, $\phi_n$ and $\psi_n$ converge strongly in $L^2(\mathcal M)$ to a function $\phi\in H^1(\mathcal M)$, such that $\phi\ge 0$ on $\mathcal M$, $\int\phi^2\,dV_g=1$ and $\phi$ is a $\lambda$-eigenfunction, which is a contradiction.
\end{oss}

We next prove a \L ojasiewicz inequality in a neighborhood of the family of critical points $\mathcal S_\lambda$. For any $u\in L^2({\mathcal M})$, we set  
$$\text{dist}_{2}(u,\mathcal S_\lambda):=\inf\big\{\|u-\varphi\|_2\ :\ \varphi\in \mathcal S_\lambda\big\},$$
and, for any $\gamma\in(0,\sfrac12]$, we define the function $f_\gamma:\R\to\R^+$ as 
\begin{equation}\label{e:f_gamma}
f_\gamma(t):=\begin{cases}
t^{\sfrac12},\ \text{if}\ t\ge 1,\\
t^{1-\gamma},\ \text{if}\ 0\le t\le 1,\\
0, \ \text{if}\ t\le 0,\\
\end{cases}
\end{equation}

\begin{prop}[Constraint \L ojasiewicz inequality for the obstacle on a manifold]\label{p:ostacolo:loja}
	Let $\lambda>0$ be an eigenfunction of $\Delta$ on $\mathcal M$, and let $\mathcal F_{\text{\tiny\sc ob}}^\lambda$, $\mathcal S_\lambda$ be as above. Then, there are constants $C, \delta>0$ (depending on $(\mathcal M,g)$ and $\lambda$) and $\gamma \in (0,\sfrac12)$ (depending only on the dimension $d=\dim\mathcal M$) such that: 
	\begin{equation}
	\label{e:ostacolo:loja}
	f_\gamma\big(\F_{\text{\tiny\sc ob}}^\lambda(u) - \F_{\text{\tiny\sc ob}}^\lambda(\mathcal S_\lambda)\big) \leq C \|\nabla \F_{\text{\tiny\sc ob}}^\lambda (u)\|_{\mathcal K},
	\end{equation}
 for every	$u\in H^2(\mathcal M)\cap\mathcal K_{\text{\tiny\sc ob}}^{\mathcal M}$ such that $\text{\rm dist}_{2}(u,\mathcal S_\lambda)\le \delta$.
	In particular, for every $E\ge 1$ and every $u\in H^2(\mathcal M)\cap\mathcal K_{\text{\tiny\sc ob}}^{\mathcal M}$ satisfying  $\text{\rm dist}_{2}(u,\mathcal S_\lambda)\le \delta$ and $\mathcal F_{\text{\tiny\sc ob}}^\lambda(u)-\mathcal F_{\text{\tiny\sc ob}}^\lambda(\mathcal S_\lambda)\le E$, we have 
	\begin{equation}
	\label{e:ostacolo:loja2}
	\big(\F_{\text{\tiny\sc ob}}^\lambda(u) - \F_{\text{\tiny\sc ob}}^\lambda(\mathcal S_\lambda)\big)_+^{1-\gamma} \leq CE^{\frac12-\gamma} \|\nabla \F_{\text{\tiny\sc ob}}^\lambda (u)\|_{\mathcal K}.
	\end{equation}
\end{prop}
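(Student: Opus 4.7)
The approach is a Lyapunov--Schmidt reduction onto the orthogonal complement of the finite-dimensional $\lambda$-eigenspace $V$, combined with a direct test-function argument. First, I would decompose $u=u_V+u_\perp$ where $u_V=Pu\in V$ is the $L^2$-projection onto $V$. Since $\lambda>0=\lambda_1$, constants lie in $V^\perp$, so $\int_\mathcal M\phi\,dV_g=0$ for every $\phi\in V$; orthogonality of eigenfunctions in both $L^2$ and $H^1$ then gives
\[
\mathcal F_{\text{\tiny\sc ob}}^\lambda(u)=\tfrac{1}{2}\!\int_\mathcal M\!\bigl(|\nabla u_\perp|^2-\lambda u_\perp^2\bigr)\,dV_g+\!\int_\mathcal M\!u_\perp\,dV_g,
\]
which depends only on $u_\perp$. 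Setting $v:=u_\perp-\lambda^{-1}\in V^\perp$, a direct computation yields $\nabla\mathcal F_{\text{\tiny\sc ob}}^\lambda(u)=-\Delta v-\lambda v\in V^\perp$ and $\mathcal F_{\text{\tiny\sc ob}}^\lambda(u)-\mathcal F_{\text{\tiny\sc ob}}^\lambda(\mathcal S_\lambda)=\tfrac{1}{2}\int_\mathcal M(|\nabla v|^2-\lambda v^2)\,dV_g$.

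Next, for any $\psi\in\mathcal S_\lambda\subset\mathcal K_{\text{\tiny\sc ob}}^\mathcal M$ the identity $\nabla\mathcal F_{\text{\tiny\sc ob}}^\lambda(\psi)=0$ together with the orthogonality of the $V$-component of $u-\psi$ to $\nabla\mathcal F_{\text{\tiny\sc ob}}^\lambda(u)\in V^\perp$ produces the crucial identity
\[
(u-\psi)\cdot\nabla\mathcal F_{\text{\tiny\sc ob}}^\lambda(u)=\int_\mathcal M\!\bigl(|\nabla v|^2-\lambda v^2\bigr)\,dV_g=2\bigl(\mathcal F_{\text{\tiny\sc ob}}^\lambda(u)-\mathcal F_{\text{\tiny\sc ob}}^\lambda(\mathcal S_\lambda)\bigr).
\]
Using $\psi$ as a test vector in the definition of $\|\cdot\|_{\mathcal K_{\text{\tiny\sc ob}}^\mathcal M}$ and optimizing over $\psi\in\mathcal S_\lambda$, I obtain the lower bound
\[
\|\nabla\mathcal F_{\text{\tiny\sc ob}}^\lambda(u)\|_{\mathcal K_{\text{\tiny\sc ob}}^\mathcal M}\ge\frac{2\bigl(\mathcal F_{\text{\tiny\sc ob}}^\lambda(u)-\mathcal F_{\text{\tiny\sc ob}}^\lambda(\mathcal S_\lambda)\bigr)_+}{\text{\rm dist}_2(u,\mathcal S_\lambda)}.
\]
Together with the smallness $\text{\rm dist}_2\le\delta$, this already settles the branch $f_\gamma(t)=t^{1/2}$ for $t:=\mathcal F_{\text{\tiny\sc ob}}^\lambda(u)-\mathcal F_{\text{\tiny\sc ob}}^\lambda(\mathcal S_\lambda)\ge1$.

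The delicate remaining step is to bound $\text{\rm dist}_2(u,\mathcal S_\lambda)$ by a sub-linear power of $\|v\|_2$ in the regime $0<t\le 1$, which is what generates the dimensional exponent $\gamma<1/2$. Writing $\text{\rm dist}_2^2=\|u_V-\phi^*\|_2^2+\|v\|_2^2$, with $\phi^*\in V$ the $L^2$-projection of $u_V$ onto the closed convex body $K_V:=\{\phi\in V:\lambda^{-1}+\phi\ge 0\}$, the constraint $u\ge 0$ gives the pointwise inequality $v\ge(\lambda^{-1}+u_V)_-$, hence $\|(\lambda^{-1}+u_V)_-\|_q\le\|v\|_q$ for every $q$. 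A quantitative analysis of the projection onto $K_V$ near its boundary (which corresponds to analytic $\lambda$-eigenfunctions touching the threshold $-\lambda^{-1}$ at non-degenerate minima) produces an estimate of the form $\|u_V-\phi^*\|_2\le C\|(\lambda^{-1}+u_V)_-\|_q^{2q/(d+2q)}$. Coupling this with elliptic regularity applied to $-\Delta v=\lambda v+\nabla\mathcal F_{\text{\tiny\sc ob}}^\lambda(u)$ and Sobolev interpolation bounding $\|v\|_q$ by $\|v\|_2^{1-\theta}\|v\|_{H^2}^\theta$, one reaches $\text{\rm dist}_2(u,\mathcal S_\lambda)\le C\|v\|_2^{\alpha(d)}$ for some $\alpha(d)\in(0,1)$; plugging this into the lower bound of the previous step yields the claim with $\gamma=\gamma(d)<1/2$. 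The main obstacle is precisely this synthesis: matching the finite-dimensional projection exponent $2q/(d+2q)$, which is controlled by the non-degeneracy of eigenfunction minima, with Sobolev interpolation on $v$, in such a way that the composite exponent $\alpha(d)$ remains strictly positive and strictly less than one in every dimension.
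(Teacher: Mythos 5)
Your reduction to $v=u_\perp-\lambda^{-1}$ and the identity $(u-\psi)\cdot\nabla\mathcal F_{\text{\tiny\sc ob}}^\lambda(u)=2\big(\mathcal F_{\text{\tiny\sc ob}}^\lambda(u)-\mathcal F_{\text{\tiny\sc ob}}^\lambda(\mathcal S_\lambda)\big)$ for $\psi\in\mathcal S_\lambda$ are correct, and they do settle the branch $t:=\mathcal F_{\text{\tiny\sc ob}}^\lambda(u)-\mathcal F_{\text{\tiny\sc ob}}^\lambda(\mathcal S_\lambda)\ge1$. But the synthesis for $0<t\le1$ has a genuine gap. From your lower bound $\|\nabla\mathcal F_{\text{\tiny\sc ob}}^\lambda(u)\|_{\mathcal K}\ge 2t_+/\mathrm{dist}_2(u,\mathcal S_\lambda)$, obtaining $t_+^{1-\gamma}\le C\|\nabla\mathcal F_{\text{\tiny\sc ob}}^\lambda(u)\|_{\mathcal K}$ forces you to prove $\mathrm{dist}_2(u,\mathcal S_\lambda)\le C\,t_+^{\gamma}$; your plan only yields $\mathrm{dist}_2(u,\mathcal S_\lambda)\le C\|v\|_2^{\alpha}$, and $\|v\|_2$ is \emph{not} controlled by any power of $t_+$, because the quadratic form $t=\frac12\int_{\mathcal M}(|\nabla v|^2-\lambda v^2)\,dV_g$ is indefinite on $V^\perp$: eigenmodes below $\lambda$ (including constants, since $\lambda>\lambda_1=0$) contribute negatively. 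Concretely, take $u=\lambda^{-1}+\eps\,\eta+\eps' w$ with $\eta$ a normalized eigenfunction of eigenvalue $\mu>\lambda$, $w$ a normalized constant, and $\eps'$ tuned so that $(\mu-\lambda)\eps^2-\lambda\eps'^2=2\eps^{N}$ with $N$ large; then $u>0$, $\mathrm{dist}_2(u,\mathcal S_\lambda)\le\delta$, $\|v\|_2\simeq\eps$, $t=\eps^{N}$, and no estimate of the form $\|v\|_2\le Ct^{\beta}$ (or $\mathrm{dist}_2\le Ct^{\gamma}$) with fixed positive exponent can hold as $\eps\to0$. The proposition itself survives on this example only because $\|\nabla\mathcal F_{\text{\tiny\sc ob}}^\lambda(u)\|_{\mathcal K}\simeq\eps\gg t^{1-\gamma}$, i.e.\ your chosen test direction $\psi-u$ is simply too lossy: it carries the full low-mode part of $u-\psi$ in the denominator while, by orthogonality, the numerator only sees the (possibly nearly cancelled) quantity $t$.

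This is exactly the point where the paper's proof does something structurally different. Writing $u-\varphi=Q_-+Q_0+\eta$ with $\eta$ the strictly-higher-mode part, it does not test with an element of $\mathcal S_\lambda$; instead it builds a competitor $\tilde u=Q_-+Q_0+\varphi+\frac{2M}{c_\lambda}(c_\lambda-\varphi)$, which differs from $u$ only by $-\eta$ plus a multiple of the $\lambda$-eigenfunction $c_\lambda-\varphi$ (a direction that pairs to zero with $\nabla\mathcal F_{\text{\tiny\sc ob}}^\lambda(u)$, used solely to restore non-negativity). Hence the numerator becomes the pure high-mode energy $2\mathcal F(\eta)\ge t_+$, untouched by low-mode cancellation, and the denominator is $\lesssim M+\|\eta\|_2$, where $\|\eta\|_2^2\lesssim\mathcal F(\eta)$ by the spectral gap and $M^{d+2}\lesssim\mathcal F(\eta)$ by the Lipschitz bound on the finite low-mode part combined with $u\ge0$; this is what produces $\gamma=\frac{1}{d+2}$. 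Your finite-dimensional projection estimate onto $K_V$ plays a role analogous to the paper's bound on $M$, but without replacing the test direction (so that the low modes disappear from the denominator and the numerator upgrades from $t_+$ to the high-mode energy) the argument cannot close; as a secondary issue, your elliptic-regularity step would also need $\|\nabla\mathcal F_{\text{\tiny\sc ob}}^\lambda(u)\|_{L^2}$, which is not controlled by $\|\nabla\mathcal F_{\text{\tiny\sc ob}}^\lambda(u)\|_{\mathcal K}$.
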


\begin{proof}
	For the sake of simplicity we set $\mathcal F=\mathcal F_{\text{\tiny\sc ob}}^\lambda$, $\mathcal K=\mathcal K_{\text{\tiny\sc ob}}^{\text{\tiny\sc m}}$ and $\mathcal S=\mathcal S_\lambda$. Let $\varphi\in\mathcal S$ be such that $\|u-\varphi\|_2\le 2\delta$. Notice that $u-\varphi$ can be uniquely decomposed in Fourier series as $u-\varphi=Q_-+Q_0+\eta$, where $Q_-$ contains only lower eigenmodes (corresponding to eigenvalues $<\lambda$), $Q_0$ is a $\lambda$-eigenfunction and $$\ds\eta(x)=\sum_{\{j:\lambda_j>\lambda\}}c_j\phi_j(x),$$ 
which contains only higher eigenmodes (corresponding to eigenvalues $>\lambda$).  Thus, $u=Q_-+Q_0+\varphi+\eta$ and $\|Q_-\|_2, \|Q_0\|_2, \|\eta\|_2\le 2\delta$. We now consider $M:=\max_{x\in {\mathcal M}}\{-Q_-(x)-Q_0(x)-\varphi(x)\}$ and suppose that the maximum is realized in a point $x_M\in\mathcal M$. Notice that since $Q_-+Q_0$ is a finite sum of (smooth) eigenfunctions, there is a constant $C>0$ (depending on $\mathcal M$ and $\lambda$) such that $\|Q_-+Q_0\|_{L^\infty}\le C\delta$. Thus, if $M>0$, then $x_M\in\{\varphi<C\delta\}$ and $M\le C\delta$. We now choose $\delta$ such that $10C\delta< c_\lambda:=\lambda^{-\sfrac12}$  and we claim that the function
	$$\tilde u=Q_-+Q_0+\varphi+\frac{2M}{c_\lambda}\big(c_\lambda-\varphi\big)$$
	is non-negative. Indeed, it is sufficient to consider the following two cases:
	
	$\bullet$ on the set $\{\varphi\ge 2C\delta\}$, we have that  
	$$\tilde u=\left(Q_-+Q_0+\frac12\varphi\right)+2M+\varphi\left(\frac12-\frac{2M}{c_\lambda}\right)\ge 0,$$
	since each of the three terms is non-negative;
	
	$\bullet$ on the set $\{\varphi\le 2C\delta\}$, we have that
	$$\tilde u\ge Q_-+Q_0+\varphi+\frac{2M}{c_\lambda}\big(c_\lambda-2C\delta\big)\ge   Q_-+Q_0+\varphi+M\ge 0.$$

Next, using the fact that $c_\lambda-\varphi$ is a $\lambda$-eigenfunction (notice that the integral of $c_\lambda-\varphi$ on $\mathcal M$ vanishes, due to the fact that $\lambda>0$), we calculate 
	\begin{equation*}
	\begin{split}
	-(\tilde u-u)\cdot\nabla \F(u)
	&= \int_{\mathcal M}\big(-\Delta  u - \lambda  u + 1\big) \left(\eta-\frac{2M}{c_\lambda}\left(c_\lambda-\varphi\right)\right) dV_g\\
	&= \int_{\mathcal M}\big(-\Delta  u - \lambda  u + 1\big) \eta\, dV_g\\
	&= \int_{\mathcal M}\big(-\Delta  (Q_-+Q_0+\eta) - \lambda  (Q_-+Q_0+\eta)\big) \eta\, dV_g\\
	&= \int_{\mathcal M} \left(|\nabla \eta|^2 -\lambda \eta^2\right)dV_g = \sum_{j:\lambda_j>\lambda} c_j^2(\lambda_j-\lambda)=2\mathcal F(\eta).
	\end{split}
	\end{equation*}
		Notice that since the set of eigenvalues is discrete, there is a (spectral gap) constant $G(\lambda)>0$ such that $\lambda_j-\lambda\ge G(\lambda)$, whenever $\lambda_j-\lambda>0$. In particular, we have the inequality 
		$$2\mathcal F(\eta)= \sum_{j:\lambda_j>\lambda} c_j^2(\lambda_j-\lambda)\ge G(\lambda)\sum_{j:\lambda_j>\lambda} c_j^2=G(\lambda)\|\eta\|_2^2.$$
		Thus, we get
	\begin{equation}\label{e:ostacolo:nablaF}
	\begin{split}
	\|\nabla \F(u)\|_{\mathcal K}& \ge \frac{-(\tilde u-u)\cdot\nabla \F(u)}{\|u-\tilde u\|_{2}} \ge\frac{2\mathcal F(\eta)}{\frac{2M}{c_\lambda}\|c_\lambda-\varphi\|_{2}+\|\eta\|_{2}} \\
	&\ge \frac{2\mathcal F(\eta)}{M\frac{2}{c_\lambda}(\|c_\lambda\|_2+\|\varphi\|_{2})+(G(\lambda)\mathcal F(\eta))^{\sfrac12}}\ge C\frac{\mathcal F(\eta)}{M+\mathcal F(\eta)^{\sfrac12}},\end{split}
	\end{equation}
where $C$ is a constant depending only on $\lambda$ and $\mathcal M$.
	On the other hand, we have 	
	\begin{align}\label{e:ostacolo:F-F}
	2\big(\F(u)- \F(\varphi)\big) &= \int_{\mathcal M} \big(|\nabla(u-\varphi)|^2- \lambda (u-\varphi)^2\big)dV_g\le \int_{\mathcal M} \big(|\nabla\eta|^2- \lambda \eta^2\big)dV_g=2\mathcal F(\eta).
	\end{align}
Now, in order to get \eqref{e:ostacolo:loja}, it only remains to estimate $M$ and put together \eqref{e:ostacolo:nablaF} and \eqref{e:ostacolo:F-F}. We notice that: 

$\bullet$ $Q_-+Q_0+\varphi$ is a (finite) linear combination of (orthonormal and smooth) eigenfunctions corresponding to eigenvalues $\le\lambda$; 

$\bullet$ the $L^2$ norm of $Q_-+Q_0+\varphi$ is bounded by a universal constant. 

\noindent As a consequence, there is a universal (Lipschitz) constant $L$, depending only on $\lambda$ and $\mathcal M$, such that $\|\nabla (Q_-+Q_0+\varphi)\|_{L^\infty(\mathcal M)}\le L$.  
\noindent Thus, since the negative part $\psi:=-\inf\{(Q_-+Q_0+\varphi),0\}$ is such that $\sup\psi=M$ is small enough (bounded by a constant depending on $\mathcal M$ and $\lambda$, as already mentioned above), we get that there is a constant $C$ (depending on $\mathcal M$) such that 
$$\|\psi\|_{L^2(\mathcal M)}^2\ge C L^{-d}M^{d+2}=C L^{-d}\|\psi\|_{L^\infty(\mathcal M)}^{d+2}.$$
Since $u\ge 0$ on $\mathcal M$, we have that $\psi\le \eta$ and so, 
$$M^{d+2}\le C^{-1}L^d\|\eta\|_2^2\le \frac{2L^d}{CG(\lambda)}\mathcal F(\eta),$$
which, together with \eqref{e:ostacolo:nablaF} and \eqref{e:ostacolo:F-F}, we get \eqref{e:ostacolo:loja} with $\ds\gamma=\frac{1}{d+2}$. 
\end{proof}

\subsection{The thin-obstacle problem on the sphere}
Let $\mathcal K_{\text{\tiny\sc th}}^\mS$ be the set of functions on the sphere which are non-negative on the equator $\{x_d=0\}\cap \partial B_1$. 
Let $m\in\N$, $\lambda:=\lambda(2m)=2m(2m+d-2)$ and $\mathcal F_{\text{\tiny\sc th}}^\lambda$ be the functional
$$\mathcal F_{\text{\tiny\sc th}}^\lambda(u)=\frac12\int_{\partial B_1}\big(|\nabla u|^2-\lambda u^2\big)\,d\HH^{d-1}.$$
Thus, $\nabla\mathcal F_{\text{\tiny\sc th}}^\lambda(u)=-\Delta u-\lambda u,$
where $\Delta$ is the Laplace-Beltrami operator on the sphere.
Notice that the operator $\nabla \mathcal F_{\text{\tiny\sc th}}^\lambda:H^2(\partial B_1)\to L^2(\partial B_1)$ is symmetric, that is, for any $u,v\in H^2(\partial B_1)$ we have  
$$v\cdot\nabla\mathcal F_{\text{\tiny\sc th}}^\lambda(u)=\int_{\partial B_1} v(-\Delta u-\lambda u)\,d\HH^{d-1}=\int_{\partial B_1} u(-\Delta v-\lambda v)\,d\HH^{d-1}=u\cdot\nabla\mathcal F_{\text{\tiny\sc th}}^\lambda(v).$$
The set of critical points of $\mathcal F_{\text{\tiny\sc th}}^\lambda$ is precisely the set of the eigenfunctions of the spherical Laplacian, corresponding to the eigenvalue $\lambda(k)$.
Let $\mathcal S_\lambda$ be the set of critical points of $\mathcal F_{\text{\tiny\sc th}}^\lambda$, which are positive on $\{x_d=0\}\cap \partial B_1$.  Notice that $\mathcal F_{\text{\tiny\sc th}}^\lambda$ vanishes on $\mathcal S_\lambda$, that is, $\mathcal F_{\text{\tiny\sc th}}^\lambda(\mathcal S_\lambda)\equiv0$.

\begin{prop}[Constrained \L ojasiewicz inequality for the thin-obstacle on the sphere]\label{prop:thin:loja}
Let $\mathcal K_{\text{\tiny\sc th}}^\mS$, $\mathcal F_{\text{\tiny\sc th}}^\lambda$, $\mathcal S_\lambda$ and $\lambda=\lambda(2m)$ be as above.
Then, there are constants $C$ and $\delta$, depending only on the dimension $d$ and the homogeneity $2m$, such that 
\begin{equation}\label{e:thin:loja}
f_\gamma\big(\mathcal F_{\text{\tiny\sc th}}^\lambda(u)\big)\le C\|\nabla\mathcal F_{\text{\tiny\sc th}}^\lambda(u)\|_{\mathcal K_{\text{\tiny\sc th}}^\mS}\quad\text{for every}\quad u\in\mathcal K_{\text{\tiny\sc th}}^\mS\cap H^2(\partial B_1)\quad\text{such that}\quad\text{\rm dist}_2(u,\mathcal S_\lambda)\le \delta,
\end{equation}
where $\gamma=\sfrac1d$, $f_\gamma$ is the function defined in  \eqref{e:f_gamma} and $\text{\rm dist}_2(u,\mathcal S_\lambda)$ is the $L^2(\partial B_1)$ distance from $u$ to the set $\mathcal S_\lambda$. In particular, for every $E\ge 1$, we have   
\begin{equation}
\label{e:thin:loja2s}
\big(\F_{\text{\tiny\sc th}}^\lambda(u)\big)_+^{1-\gamma} \leq CE^{\frac12-\gamma} \|\nabla \F_{\text{\tiny\sc th}}^\lambda (u)\|_{\mathcal K},
\end{equation}
for every $u\in H^2(\partial B_1)\cap\mathcal K_{\text{\tiny\sc th}}^\mS$ such that  $\text{\rm dist}_{2}(u,\mathcal S_\lambda)\le \delta$ and $\mathcal F_{\text{\tiny\sc th}}^\lambda(u)\le E$.
\end{prop}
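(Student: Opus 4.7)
The plan is to mimic the structure of the proof of Proposition~\ref{p:ostacolo:loja}, adapting the construction of the competitor to the equatorial constraint of the thin-obstacle problem. We fix $\varphi\in\mathcal S_\lambda$ realizing (up to a factor) the distance from $u$ to $\mathcal S_\lambda$, so $\|u-\varphi\|_{L^2}\le 2\delta$, and decompose $u-\varphi=Q_-+Q_0+\eta$ into spherical Laplacian eigenmodes: $Q_-$ lives in eigenvalues $<\lambda$, $Q_0$ in the $\lambda$-eigenspace $V$, and $\eta$ in eigenvalues $>\lambda$. Since $\varphi\in V\subset\ker\mathcal F_{\text{\tiny\sc th}}^\lambda$, the cross-terms vanish and
$$
\mathcal F_{\text{\tiny\sc th}}^\lambda(u)=\mathcal F_{\text{\tiny\sc th}}^\lambda(Q_-)+\mathcal F_{\text{\tiny\sc th}}^\lambda(\eta),\qquad \mathcal F_{\text{\tiny\sc th}}^\lambda(Q_-)\le 0\le \mathcal F_{\text{\tiny\sc th}}^\lambda(\eta),
$$
so by monotonicity of $f_\gamma$ it suffices to estimate $\mathcal F_{\text{\tiny\sc th}}^\lambda(\eta)$.

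We fix once and for all a $\lambda$-eigenfunction $\psi\in V$ that is even in $x_d$ and strictly positive on the equator $E:=\partial B_1\cap\{x_d=0\}$, normalized so $\min_E\psi=1$; such $\psi$ exists as a zonal spherical harmonic of degree $2m$ (a Gegenbauer polynomial in $x_d$), and $\|\psi\|_{L^2}$ depends only on $d,m$. Setting $M:=\max_{x\in E}\bigl(-(\varphi+Q_-+Q_0)(x)\bigr)_+$, we define
$$
\tilde u:=u-\eta+M\psi\;=\;\varphi+Q_-+Q_0+M\psi.
$$
Then $\tilde u$ is even, lies in $H^2(\partial B_1)$, and satisfies $\tilde u\ge -M+M\cdot 1=0$ on $E$, so $\tilde u\in\mathcal K_{\text{\tiny\sc th}}^\mS$. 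Using that $\psi,\varphi,Q_0\in\ker(-\Delta-\lambda)$ together with orthogonality of Laplacian eigenspaces, a direct computation gives
$$
-(\tilde u-u)\cdot\nabla\mathcal F_{\text{\tiny\sc th}}^\lambda(u)=\int_{\partial B_1}(\eta-M\psi)(-\Delta u-\lambda u)\,d\mathcal H^{d-1}=2\mathcal F_{\text{\tiny\sc th}}^\lambda(\eta),
$$
and since $\|\tilde u-u\|_{L^2}\le\|\eta\|_{L^2}+M\|\psi\|_{L^2}$, the definition of $\|\cdot\|_{\mathcal K_{\text{\tiny\sc th}}^\mS}$ yields
$$
\|\nabla\mathcal F_{\text{\tiny\sc th}}^\lambda(u)\|_{\mathcal K_{\text{\tiny\sc th}}^\mS}\ge\frac{2\mathcal F_{\text{\tiny\sc th}}^\lambda(\eta)}{\|\eta\|_{L^2}+M\|\psi\|_{L^2}}.
$$
The spectral gap above $\lambda$ then gives $\|\eta\|_{L^2}\le C\sqrt{\mathcal F_{\text{\tiny\sc th}}^\lambda(\eta)}$, so the only remaining task is to prove the pyramid-type estimate $M\le C\mathcal F_{\text{\tiny\sc th}}^\lambda(\eta)^{1/d}$.

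This pyramid estimate is the core step, and the main obstacle of the proof. We set $h:=\varphi+Q_-+Q_0=u-\eta$; since $h$ is a finite linear combination of spherical harmonics of degree $\le 2m$, equivalence of norms on this finite-dimensional space bounds the Lipschitz constant of $h|_E$ by some $L\le C_{d,m}\|h\|_{L^2}$. Since $u\ge 0$ on $E$, one has $(-h)_+\le|\eta|$ pointwise on $E$. A pyramid/cone argument around the point where $-h$ attains its maximum $M$ (a geodesic ball in $E$ of radius $\sim M/L$ on which $(-h)_+\ge M/2$) then yields
$$
c\,\frac{M^d}{L^{d-2}}\le\int_E(-h)_+^2\,d\mathcal H^{d-2}\le\int_E\eta^2\,d\mathcal H^{d-2}\le C\,\|\eta\|_{H^1(\partial B_1)}\|\eta\|_{L^2(\partial B_1)}\le C\,\mathcal F_{\text{\tiny\sc th}}^\lambda(\eta),
$$
where the third inequality is the trace $H^1(\partial B_1)\hookrightarrow H^{1/2}(E)\hookrightarrow L^2(E)$ combined with interpolation, and the fourth again uses the spectral gap (which also controls $\|\nabla\eta\|_{L^2}$ by $\mathcal F_{\text{\tiny\sc th}}^\lambda(\eta)$). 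Rearranging gives the desired bound on $M$.

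Substituting into the main estimate: for $\mathcal F_{\text{\tiny\sc th}}^\lambda(\eta)\le 1$ the term $M\|\psi\|_{L^2}\sim\mathcal F^{1/d}$ dominates $\|\eta\|_{L^2}\sim\mathcal F^{1/2}$ (as $1/d\le 1/2$), producing $\mathcal F_{\text{\tiny\sc th}}^\lambda(\eta)^{1-1/d}\le C\|\nabla\mathcal F_{\text{\tiny\sc th}}^\lambda(u)\|_{\mathcal K_{\text{\tiny\sc th}}^\mS}$; for $\mathcal F_{\text{\tiny\sc th}}^\lambda(\eta)\ge 1$ the $\|\eta\|_{L^2}$ term dominates and gives $\sqrt{\mathcal F_{\text{\tiny\sc th}}^\lambda(\eta)}\le C\|\nabla\mathcal F_{\text{\tiny\sc th}}^\lambda(u)\|_{\mathcal K_{\text{\tiny\sc th}}^\mS}$. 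Together they are exactly \eqref{e:thin:loja}, and \eqref{e:thin:loja2s} follows from the elementary bound $f_\gamma(t)\le E^{1/2-\gamma}t^{1-\gamma}$ valid whenever $0\le t\le E$ and $E\ge 1$. A remaining technical point is that the Lipschitz bound $L\le C_{d,m}(\|\varphi\|_{L^2}+2\delta)$ is not uniformly controlled on the convex cone $\mathcal S_\lambda$; this is resolved by the compatible homogeneities of $\mathcal S_\lambda$ and of the inequality under the rescaling $u\mapsto tu$ (since $\mathcal F_{\text{\tiny\sc th}}^\lambda$ is $2$-homogeneous and $\|\nabla\mathcal F_{\text{\tiny\sc th}}^\lambda\|_{\mathcal K_{\text{\tiny\sc th}}^\mS}$ is $1$-homogeneous), which reduces the problem to the case $\|\varphi\|_{L^2}\lesssim 1$ where all constants are dimensional.
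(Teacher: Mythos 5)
Your argument is, in substance, the paper's own proof: the same decomposition of $u$ into spherical-harmonic modes below, at, and above $\lambda(2m)$, the same competitor obtained by discarding the high modes and adding $M\psi$ with $\psi$ a $\lambda(2m)$-eigenfunction positive on the equator, the same identity $-(\tilde u-u)\cdot\nabla\mathcal F_{\text{\tiny\sc th}}^\lambda(u)=2\mathcal F_{\text{\tiny\sc th}}^\lambda(\eta)$ via symmetry and orthogonality, the same spectral-gap control of $\|\eta\|_{L^2}$, and the same Lipschitz/pyramid plus equatorial-trace estimate $M^d\lesssim L^{d-2}\,\mathcal F_{\text{\tiny\sc th}}^\lambda(\eta)$. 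The only cosmetic differences are that you decompose $u-\varphi$ instead of $u$ (equivalent, since $\varphi$ lies in the $\lambda$-eigenspace) and that you invoke an interpolated $H^{1/2}$-trace bound where the plain trace $H^1(\partial B_1)\hookrightarrow L^2(\mathbb S^{d-2})$ combined with the spectral gap, as in the paper, already suffices.

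The one place you depart from the paper is your closing normalization remark, and it does not work as stated. Under $u\mapsto tu$ the two sides of \eqref{e:thin:loja} scale with different powers: $\mathcal F_{\text{\tiny\sc th}}^\lambda(tu)=t^2\,\mathcal F_{\text{\tiny\sc th}}^\lambda(u)$ while $\|\nabla\mathcal F_{\text{\tiny\sc th}}^\lambda(tu)\|_{\mathcal K_{\text{\tiny\sc th}}^{\mS}}=t\,\|\nabla\mathcal F_{\text{\tiny\sc th}}^\lambda(u)\|_{\mathcal K_{\text{\tiny\sc th}}^{\mS}}$. Hence, applying the normalized inequality to $tu$ with $t=\|\varphi\|_{L^2}^{-1}\le1$ in the regime $\mathcal F_{\text{\tiny\sc th}}^\lambda(u)\le 1$ only yields
\begin{equation*}
\big(\mathcal F_{\text{\tiny\sc th}}^\lambda(u)\big)^{1-\gamma}\le C\,\|\varphi\|_{L^2}^{1-2\gamma}\,\|\nabla\mathcal F_{\text{\tiny\sc th}}^\lambda(u)\|_{\mathcal K_{\text{\tiny\sc th}}^{\mS}},
\end{equation*}
and since $\gamma=\sfrac1d<\sfrac12$ the constant degrades like $\|\varphi\|_{L^2}^{1-2\gamma}$; the claimed reduction to $\|\varphi\|_{L^2}\lesssim1$ would only be legitimate if $\gamma$ were exactly $\sfrac12$. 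So your proof, exactly like the paper's, really produces a constant depending on an a priori bound for $\|v_-+v_0\|_{L^2}$ through the Lipschitz constant $L$ entering the pyramid step (the paper silently absorbs $L$ into $C_{d,m}$); in the applications this is harmless because the relevant data are a priori bounded in $L^2$ (e.g.\ the hypothesis \eqref{e:orchecattive} in Theorem \ref{t:log}). You should either state this dependence of the constant (or add the corresponding boundedness hypothesis), or find a genuinely different argument for uniformity over the unbounded cone $\mathcal S_\lambda$; the homogeneity argument cannot deliver it.
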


\begin{proof}For the sake of simplicity, we set $\mathcal F = \mathcal F_{\text{\tiny\sc th}}^\lambda$,   $\mathcal K = \mathcal K_{\text{\tiny\sc th}}^\mS$ and $\mathcal S=\mathcal S_\lambda$.
Let $u\in \mathcal K$ be such that $\mathcal F(u)>\mathcal F(\mathcal S)=0$. By the definition of $\|\nabla\mathcal F\|_{\mathcal K}$, it is sufficient to prove that there is a function $\tilde u\in \mathcal K$ such that, for universal constants $C>0$ and $\gamma\in(0,\sfrac12]$ we have
\begin{equation}\label{e:thin:loja2}
\frac{-(\tilde u-u)\cdot \nabla \mathcal F(u)}{\|\tilde u-u\|_{2}}\ge C f_\gamma(\mathcal F(u)).
\end{equation}

\noindent We notice that $u$ can be decomposed in Fourier series on $\partial B_1$, using the eigenfunctions of the spherical Laplacian.  We write $u$ as $u=v_-+v_0+v_+$ and we set $\tilde u=v_-+\tilde v_0$, where:

$\bullet$  $v_-$ is the projection of $u$ on the space of eigenfunctions corresponding to eigenvalues $< \lambda(2m)$; 

$\bullet$  $v_+$ is the projection of $u$ on the space of eigenfunctions corresponding to eigenvalues $> \lambda(2m)$; 

$\bullet$ $v_0$ and $\tilde v_0$ are critical points for $\mathcal F$ (eigenfunctions for $\lambda(2m)$);  $\tilde v_0$ will be chosen later.
\medskip

\noindent We first calculate the left-hand side of \eqref{e:thin:loja2}. For the $L^2$ norm we have
\begin{equation*}
\|\tilde u-u\|_2=\left(\|\tilde v_0-v_0\|_2^2+\|v_+\|_2^2\right)^{\sfrac12}.
\end{equation*}
For the scalar product, we integrate by parts and use the orthogonality of the eigenfunctions:
\begin{align*}
-(\tilde u-u)\cdot\nabla\mathcal F(u)&=(v_0-\tilde v_0+v_+)\cdot\nabla\mathcal F(u)=u\cdot \nabla\mathcal F(v_0-\tilde v_0+v_+)\\
	&= u\cdot \nabla\mathcal F(v_+)=v_+\cdot \nabla\mathcal F(u)=v_+\cdot \nabla\mathcal F(v_+)=\mathcal F(v_+).
\end{align*}
Thus, we obtain
\begin{equation}\label{e:thin:key1}
-\frac{\tilde u-u}{\|\tilde u-u\|_{2}}\cdot \nabla \mathcal F(u)=\frac{\mathcal F(v_+)}{\left(\|\tilde v_0-v_0\|_2^2+\|v_+\|_2^2\right)^{\sfrac12}}.
\end{equation}
We now aim to estimate $\|\tilde v_0-v_0\|_2^2+\|v_+\|_2^2$ by $\mathcal F(v_+)$. First, we notice that
\begin{equation}\label{e:thin:key2}
\mathcal F(v_+)=\sum_{j=2m+1}^\infty c_j^2\big(\lambda(j)-\lambda(2m)\big)\ge \big(\lambda(2m+1)-\lambda(2m)\big)\sum_{j=2m+1}^\infty c_j^2= (4m+d-1) \|v_+\|_{2}^2,
\end{equation}
which estimates the second term. In order to give a bound for $\|\tilde v_0-v_0\|_2$ we will need to choose $\tilde v_0$ carefully. 
Let $\psi$ be an eigenfunction of the spherical Laplacian, corresponding to the eigenvalue $\lambda(2m)$, and such that $\psi=1$ on $\{x_d=0\}\cap\partial B_1$. We choose 
$$\tilde v_0=M\psi +v_0\qquad\text{where}\qquad M=-\inf_{\{x_d=0\}\cap \partial B_1}(v_-+v_0).$$
We now claim that there is a constant $C_{d,m}>0$, depending on $d$ and $m$, such that 
\begin{equation}\label{e:gamma}
M^d\le C_{d,m}\mathcal F(v_+).
\end{equation}
First of all, we notice that there is a constant $L_m$, depending only on $d$ and $m$, such that all the eigenfunctions corresponding to eigenvalues $\le \lambda(2m)$ are globally $L_m$-Lipschitz continuous on $\partial B_1$. In particular, the function $v_-+v_0$ is $L$-Lipschitz continuous for  $L=L_m(\|v_-\|_2^2+\|v_0\|_2^2)^{\sfrac12}$. Since $v_-+v_0+v_+$ is non-negative on $\mathbb{S}^{d-2}$ we get that 
\begin{equation*}
\int_{\mathbb{S}^{d-2}} v_+^2\,d\HH^{d-2}\ge\int_{\mathbb{S}^{d-2}} \big(\min\{0,v_-+v_0\}\big)^2\,d\HH^{d-2}\ge C_d M^2 \left(\frac{M}{L}\right)^{d-2}=\frac{C_d}{L^{d-2}}M^{d},
\end{equation*}
for some dimensional constant $C_d$. 
Now, by the trace inequality on the sphere $\partial B_1$ and the fact that the expansion of $v_+$ contains only eigenfunctions corresponding to frequencies higher than $\lambda(2m)$, we get
\begin{equation*}
\ds\int_{\mathbb{S}^{d-2}} v_+^2\,d\HH^{d-2}\ds\le C_d\int_{\partial B_1} \left(|\nabla_\theta v_+|^2+v_+^2\right)\,d\HH^{d-1}\le C_{d,m} \mathcal F(v_+),
\end{equation*}
which concludes the proof of \eqref{e:gamma}. Finally, \eqref{e:gamma} and \eqref{e:thin:key2} give  
$$\qquad\|\nabla\mathcal F(u)\|_{\mathcal K}\geq \frac{\mathcal F(v_+)}{\left(M^2\|\psi\|_2^2+\|v_+\|_2^2\right)^{\sfrac12}}\ge C_{d,m}  f_\gamma\big(\mathcal F(v_+)\big)\ge C_{d,m}  f_\gamma\big(\mathcal F(u)\big).
\qquad\qedhere
$$

\section{Proof of the main results}\label{s:proofs}

\subsection{Proof of Theorem \ref{t:flow1}} 
We first consider the case of the parabolic obstacle problem \eqref{e:intro:ostacolo:para}. By Proposition \ref{p:ost:loja}, we have that the functional $\mathcal F_{\text{\tiny\sc ob}}$ satisfies the constrained \L ojasiewicz inequality with $\gamma=\sfrac12$, while Lemma \ref{l:cont} implies that the flow of $\mathcal F_{\text{\tiny\sc ob}}$ is continuous with respect to the initial datum. Thus, by Proposition \ref{p:decay} and Corollary \ref{c:1}, we have that the solution $u:[0,+\infty[\,\mapsto L^2(B_1)$ of \eqref{e:intro:ostacolo:para} converges in $L^2(B_1)$ to the unique stationary solution $\varphi$, which is also the unique solution of \eqref{e:intro:ostacolo}. In order to get the convergence in $H^1(B_1)$ and its  rate, we estimate 
\begin{align*}
\int_{B_1}|\nabla (u(t)-\varphi)|^2\,dx
&=\int_{B_1}|\nabla u(t)|^2\,dx-\int_{B_1}|\nabla \varphi|^2\,dx-2\int_{B_1}\nabla \varphi\cdot\nabla (u(t)-\varphi)\,dx\\
&=\int_{B_1}|\nabla u(t)|^2\,dx-\int_{B_1}|\nabla \varphi|^2\,dx+2\int_{B_1} (u(t)-\varphi)\ind_{\{\varphi>0\}}\,dx\\
&=2\big(\mathcal F_{\text{\tiny\sc ob}}(u(t))-\mathcal F_{\text{\tiny\sc ob}}(\varphi)\big)+2|B_1|^{\sfrac12}\|u(t)-\varphi\|_{L^2(B_1)}. 
\end{align*}
Thus, \eqref{e:flow1:convergence} follows by the exponential estimates in Corollary \ref{c:1}.
 
In the case of the thin-obstacle problem \eqref{e:intro:thin:para},  we first apply Proposition \ref{p:sottile} obtaining that the constrained \L ojasiewicz inequality (with $\gamma=\sfrac12$) holds for the functional $\mathcal F_{\text{\tiny\sc th}}$, along the solution $u(t)$ of \eqref{e:intro:thin:para}. On the other hand, Lemma \ref{l:cont} implies that the flow of $\mathcal F_{\text{\tiny\sc th}}$ is continuous with respect to the initial datum. As a consequence, by Corollary \ref{c:1} (and Remark \ref{oss:main:loja}), we have that the solution $u(t)$ converges in $L^2(B_1)$ to the unique solution $\varphi$ of \eqref{e:intro:thin}. Using the notations $H=\{x_d>0\}$ and $B_1^+=H\cap B_1$, we calculate
\begin{align*}
\int_{B_1^+}|\nabla (u(t)-\varphi)|^2\,dx
&=\int_{B_1^+}|\nabla u(t)|^2\,dx-\int_{B_1^+}|\nabla \varphi|^2\,dx+2\int_{\partial H\cap B_1}\frac{\partial \varphi}{\partial x_d} (u(t)-\varphi)\,dx'\\
&\le\int_{B_1^+}|\nabla u(t)|^2\,dx-\int_{B_1^+}|\nabla \varphi|^2\,dx=\mathcal F_{\text{\tiny\sc th}}(u(t))-\mathcal F_{\text{\tiny\sc th}}(\varphi). 
\end{align*}
Thus, the conclusion follows by the estimate \eqref{e:main:decay_rate_1} of Corollary \ref{c:1}.\qed

\subsection{Proof of Theorem \ref{t:flow2}} Let us first treat the case of the parabolic obstacle problem \eqref{e:intro:ostacolo:M} on the sphere. Due to Lemma \ref{l:cont} and Proposition \ref{p:ostacolo:loja}, the hypotheses of Proposition \ref{p:decay} are satisfied. Thus, $u(t)$ converges to a function $u_\infty$, which is a critical point of $\mathcal F_{\text{\tiny\sc ob}}^\lambda$ in $\mathcal K_{\text{\tiny\sc ob}}^\mS$ (in the sense of \eqref{e:main:critical}). Now, since the critical points of $\mathcal F_{\text{\tiny\sc ob}}^\lambda$ in $\mathcal K_{\text{\tiny\sc ob}}^\mS$ (which are in a neighborhood of $\mathcal S_\lambda$) are classified, we get that $u_\infty\in \mathcal S_\lambda$. Moreover, Proposition \ref{p:decay} implies that 
$$\|u(t)-u_\infty\|_{L^2(\mS)}\le Ct^{-\frac{\gamma}{1-2\gamma}}\qquad\text{and}\qquad \mathcal F_{\text{\tiny\sc ob}}^\lambda(u(t))-\mathcal F_{\text{\tiny\sc ob}}^\lambda(u_\infty)\le Ct^{-\frac{1}{1-2\gamma}}.$$ 
Now, reasoning as in the proof of Theorem \ref{t:flow1}, we obtain
\begin{align*}
\int_{\mS}|\nabla (u(t)-u_\infty)|^2
&=\int_{\mS}|\nabla u(t)|^2-\int_{\mS}|\nabla u_\infty|^2-2\int_{\mS}\nabla u_\infty\cdot\nabla (u(t)-u_\infty)\\
&=\int_{B_1}|\nabla u(t)|^2\,dx-\int_{B_1}|\nabla u_\infty|^2\,dx+2\int_{B_1} (u(t)-u_\infty)(1-\lambda u_\infty)\,dx\\
&=2\big(\mathcal F_{\text{\tiny\sc ob}}^\lambda(u(t))-\mathcal F_{\text{\tiny\sc ob}}^\lambda(u_\infty)\big)+\lambda\int_{\mS}|u(t)-u_\infty|^2,
\end{align*}
which finally gives \eqref{e:in_culo_decay2}.

The case of the parabolic thin-obstacle problem is analogous. Using Lemma \ref{l:cont} and Proposition \ref{p:ostacolo:loja} with $\gamma\in\,]0,\sfrac12[\,$, we get by Proposition \ref{p:decay} that $u(t)$ converges to a function $u_\infty$, which is a critical point of $\mathcal F_{\text{\tiny\sc th}}^\lambda$ in $\mathcal K_{\text{\tiny\sc th}}^\mS$. Since the critical points of $\mathcal F_{\text{\tiny\sc th}}^\lambda$ in $\mathcal K_{\text{\tiny\sc th}}^\mS$ are classified, we get that $u_\infty\in\mathcal S_\lambda$. We denote by $\mS^+$ the upper half-sphere $\{x_d>0\}\cap\partial B_1$ and we calculate
\begin{align*}
\int_{\mS^+}|\nabla (u(t)-u_\infty)|^2
&=\int_{\mS^+}|\nabla u(t)|^2-\int_{\mS^+}|\nabla u_\infty|^2-2\lambda\int_{\mS^+}u_\infty (u(t)-u_\infty)\\
&=\mathcal F_{\text{\tiny\sc th}}(u(t))-\mathcal F_{\text{\tiny\sc th}}(u_\infty)+\lambda \int_{\mS^+}|u(t)-u_\infty|^2. 
\end{align*}
Now, using Proposition \eqref{p:decay}, we get \eqref{e:in_culo_decay2}. \qed

\subsection{Proof of Theorem \ref{t:log}} Both the claims (OB) and (TH) follow by Proposition \ref{p:epiK}. Let us check that the conditions (SL), (FL) and (\L S) of Proposition \ref{p:epiK} are satisfied. 

(SL) In order, to prove the slicing condition (SL), we set 
\begin{equation}\label{e:keyGk}
\G(u):=\int_{B_1}|\nabla u|^2 \,dx -k\, \int_{\de B_1} u^2 \,d\HH^{d-1}\,,
\end{equation}
for some $k>0$ and $u=u(r,\theta)$. Setting $\theta\in\partial B_1$, $d\theta=d\HH^{d-1}$ and we calculate
\begin{align}
\mathcal G(r^ku)
&=\int_0^1 \int_{\de B_1}\left(|k\,r^{k-1} u+r^k \de_r u|^2+r^{2k-2}|\nabla_\theta u|^2\right)\,d\theta\,r^{d-1}\,dr-k\,\int_{\de B_1} u^2 \,d\theta\notag\\
&=\int_0^1 \int_{\de B_1} \left( k^2 r^{2k-2}u^2+r^{2k} |\de_r u|^2+ k\, r^{2k-1} \de_r(u^2)+r^{2k-2} |\nabla_\theta u|^2 \right)\,d\theta\,r^{d-1}\,dr-k\,\int_{\de B_1} u^2 \,d\theta\notag\\
&=\int_0^1 \int_{\de B_1} \left( k^2 r^{2k-2}u^2+r^{2k} |\de_r u|^2- k(2k+d-2)\, r^{2k-2} u^2+r^{2k-2} |\nabla_\theta u|^2 \right)\,d\theta\,r^{d-1}\,dr\notag\\
&=\int_0^1 r^{2k+d-3} \int_{\de B_1} \left(|\nabla_\theta u|^2  - k(k+d-2)\,u^2\right)\,d\theta\,dr+\int_0^1 r^{2k+d-1}\int_{\de B_1} |\de_r u|^2\,d\theta\,dr\notag
\end{align}
that is, if we set 
\begin{equation}\label{e:keyFk}
\cF(\phi):=\int_{\de B_1} \left(|\nabla_\theta \phi|^2  - \lambda(k)\,\phi^2\right)\,d\HH^{d-1}\,,\quad\text{where}\quad\lambda(k)=k(k+d-2),
\end{equation}
then we have the slicing equality 
\begin{equation}
\label{e:sl1}
\G(r^ku)=\int_0^1 \cF\big(u(r,\cdot)\big) r^{2k+d-3} \,dr+\int_0^1r^{2k+d-1}\int_{\de B_1} |\de_r u|^2\,d\HH^{d-1}\,dr.
\end{equation}
Now, setting $k=2m$, $\lambda=\lambda(2m)$, $\mathcal G=\mathcal G_{\text{\tiny\sc th}}$ and $\mathcal F=\mathcal F_{\text{\tiny\sc th}}^\lambda$, we get the slicing inequality (SL) for the thin-obstacle problem (at the singular points, where the homogeneity of the blow-up is $2m$). 
Using the identity \eqref{e:sl1} with $k=1$ and $\lambda=2d$, together with a simple change in polar coordinates, we get (SL) for the obstacle problem, with $\mathcal F=\mathcal F_{\text{\tiny\sc ob}}^\lambda$ and $\mathcal G=\mathcal G_{\text{\tiny\sc ob}}$.

(FL) The existence of the flow (FL) follows from the general existence result \cite{brezis}. 
 
(\L S) For what concerns the coinstrained \L ojasiewicz inequality (\L S), in the case of the obstacle problem it is enough to apply Proposition \ref{p:ostacolo:loja} with $\M=\mathbb S^{d-1}$ and $\lambda = 2d$, while in the case of the thin-obstacle it follows from Proposition \ref{prop:thin:loja}. \qed

\end{proof}

\bigskip\bigskip
\noindent {\bf Acknowledgments.} 
The second author has been partially supported by the NSF grant DMS 1810645. The third author has been partially supported by Agence Nationale de la Recherche (ANR) by the projects GeoSpec (LabEx PERSYVAL-Lab, ANR-11-LABX-0025-01) and CoMeDiC (ANR-15-CE40-0006). 

\bibliographystyle{plain}
\bibliography{references-Cal}

\end{document}